\let\savedegree\degree
\let\degree\relax
\let\degree\savedegree
\numberwithin{equation}{section}
\tikzstyle{block} = [rectangle,draw,text width=10em,text centered,rounded corners,minimum height=4em]
\tikzstyle{line} = [draw, -latex']
\tikzset{arrow/.style={postaction={decorate,thick,decoration={markings,mark = at position #1 with {\arrow{>}}}}},arrow/.default=1}
\tikzset{arrow/.style={postaction={decorate,thick,decoration={markings,mark = at position #1 with {\arrow{>}}}}},arrow/.default=0.5}
\tikzset{invarrow/.style={postaction={decorate,thick,decoration={markings,mark = at position #1 with {\arrow{<}}}}},invarrow/.default=0.5}
\tikzset{gcol/.style={black!35!green}}
\newcommand{\rcheck}[2]{
  \begin{scope}[shift={(#1,#2)}]
    \draw (0,0) -- (1,1);
    \draw (1,0) -- (0,1);
  \end{scope}
}
\newcommand{\rcheckarrows}[2]{
  \begin{scope}[shift={(#1,#2)}]
    \draw[invarrow=0.75] (0,0) -- (1,1);
    \draw[invarrow=0.75] (1,0) -- (0,1);
  \end{scope}
}
\newcommand{\rcheckarrowleft}[2]{
  \begin{scope}[shift={(#1,#2)}]
    \draw (0,0) -- (1,1);
    \draw[invarrow=0.75] (1,0) -- (0,1);
  \end{scope}
}
\newtheorem{defn}{Definition}
\newtheorem{lem}{Lemma}
\newtheorem{thm}{Theorem}
\newtheorem{conj}{Conjecture}
\newtheorem{cor}{Corollary}
\newtheorem{rmk}{Remark}
\newtheorem{prop}{Proposition}
\newtheorem{ex}[thm]{Example}
\def\leq{\leqslant}
\def\geq{\geqslant}
\def\i{\imath}
\newcommand{\bra}[1]{\left\langle #1\right|}
\newcommand{\ket}[1]{\left|#1\right\rangle}
\newcommand{\ba}{\[\begin{aligned}~}
\newcommand{\ea}{\end{aligned}\]}
\def\id{\normalfont{\text{id}}}
\def\Tr{\normalfont{\text{Tr}}}
\newcommand{\BC}{\mathbb{C}}
\newcommand{\BN}{\mathbb{N}}
\newcommand{\BZ}{\mathbb{Z}}
\newcommand{\CA}{\mathcal{A}}
\newcommand{\CB}{\mathcal{B}}
\newcommand{\CS}{\mathcal{S}}
\newcommand{\CV}{\mathcal{V}}
\newcommand{\bd}{\boldsymbol{d}}
\newcommand{\bs}{\boldsymbol{\varsigma}}
\newcommand{\bde}{\boldsymbol{\delta}}
\newcommand{\zz}{\BZ^I}
\newcommand{\vdeg}{\text{vdeg }}
\newcommand{\hdeg}{\text{hdeg }}
\newcommand{\fsl}{\mathfrak{sl}}
\newcommand{\fgl}{\mathfrak{gl}}
\newcommand{\su}{U_t(\dot{\fsl}_n)}
\newcommand{\uu}{U_t(\dot{\fgl}_n)}
\newcommand{\uui}{U_t(\dot{\fgl}_1)}
\newcommand{\suu}{U_t(\dot{\fgl}_{n|m})}
\newcommand{\UU}{U_{q,t}(\ddot{\fgl}_n)}
\title{Shuffle algebras, lattice paths and quantum toroidal $\mathfrak{gl}_{n|m}$}
\author{Alexandr Garbali and Andrei Negu\cb{t}}
\address{Alexandr~Garbali, School of Mathematics and Statistics, University of Melbourne, Australia}
\email{alexandr.garbali@unimelb.edu.au}
\address{Andrei~Negu\cb t, \'{E}cole Polytechnique F\'ed\'erale de Lausanne (EPFL), Lausanne, Switzerland
\newline \text{} \quad Simion Stoilow Institute of Mathematics (IMAR), Bucharest, Romania}
\email{andrei.negut@gmail.com}
\begin{document}
\begin{abstract}
We describe and compute various families of commuting elements of the matrix shuffle algebra (\!\!\cite{Ng_tale}) of type $\fgl_{n|m}$, which is expected to be isomorphic to quantum toroidal $\fgl_{n|m}$ (\!\!\cite{BM, Ng_reduced}). Our formulas are given in terms of partial traces of products of $R$-matrices of the quantum affine algebra $\suu$, and have a lattice path interpretation following \cite{GG}. Our calculations are based on the machinery of the quantum toroidal algebras and a new anti-homomorphism between matrix shuffle algebras.
\end{abstract}
\maketitle

\section{Introduction}
Shuffle algebras associated with quantum toroidal algebras have been an active topic of research in the past years. They appear in the construction of quantum toroidal algebras and in their geometric representations \cite{FT-shuffle,FiT,SV-Hall,Ng_Laumon}, calculations of integrals of motion \cite{FO,FHSSY,FJM_integrals,FJM_ssym} and the Bethe ansatz \cite{LV,FJMM_BA}.

The quantum toroidal algebra of type $\fgl_n$ has two (in a sense orthogonal) realizations as double shuffle algebras: the first of these, denoted by $\mathcal{S}$, is more standard and traces its origin to \cite{FO} and \cite{E}; we recall it in Subsection \ref{sub:classic shuffle}. The second realization, denoted by $\mathcal{A}$, was introduced in \cite{Ng_tale} and has been less studied; it is a vector space of rational functions with matrix valued coefficients living in $\text{End}(V\otimes \cdots \otimes V)$ with $V\cong\mathbb C^n$. For this reason we refer to $\mathcal{A}$ as the {\it matrix} shuffle algebra and by this we distinguish it from $\mathcal{S}$ \footnote{For $n=1$ the two algebras $\mathcal{S}$ and $\mathcal{A}$ coincide.}. Such shuffle algebra presentations of quantum toroidal algebras offer a number of technical advantages. Instead of having abstract generators, relations and multiplication rules, in the shuffle presentation one has explicit rational functions and an explicit multiplication, called the {\it shuffle multiplication}. A notable example of the advantage offered by the shuffle algebra presentation is given by a description of commutative subalgebras of toroidal algebras \cite{FHSSY,FJM_integrals,FT_Bethe}. This is best illustrated in the case of the quantum toroidal $\fgl_1$ in \cite{FHSSY}, where the authors provided explicit formulas for several families of elements of the commuting shuffle algebra and proved their commutativity with respect to the shuffle multiplication. 

There exists an alternative approach to the commutative subalgebra of the $\fgl_1$ shuffle algebra due to \cite{GZJ,GG}. In this approach one computes partition functions of ensembles of coloured paths on a finite portion of the square lattice with specific boundary conditions and local Boltzmann weights given by the matrix elements of the $R$-matrix of $\suu$. These partition functions produce families of rational functions which belong to the space of $\fgl_1$ commuting shuffle algebra. By tuning the boundary conditions appropriately one can compute different families of shuffle algebra elements, including those of \cite{FHSSY}. In \cite{GG} the boundary conditions are chosen to be ``conic'', which means that the lattice partition functions can be written as a partial trace of products of $R$-matrices. Such trace formulas can be manipulated to reproduce the definition of the shuffle product, thus giving a new way of proving shuffle commutativity.

In this paper we generalize the results of \cite{GG} to the case of the commuting subalgebra of the matrix shuffle algebra of type $\fgl_{n|m}$. Our generalization relies on Conjecture \ref{conj:super}, which states that the quantum toroidal $\fgl_{n|m}$ is isomorphic to the double matrix shuffle algebra. This Conjecture was proved in the case $m=0$ in \cite{Ng_tale}. A key element in our approach is the construction of explicit anti-homomorphisms (of independent interest) between the matrix shuffle algebra of type $\fgl_{n'|m'}$ and the matrix shuffle algebra of type $\fgl_{n|m}$.

\subsection{Conic partition function}
Consider a cone pointing upwards and the square lattice drawn on it as follows. Draw $N$ directed parallel lines which wrap around the tip of the cone and then self-intersect:
\begin{align}\label{eq:cone}
\begin{tikzpicture}[scale=0.7,transform shape]
\newcommand{\hht}{4}  
\newcommand{\wt}{3}  
\newcommand{\hwt}{0.5\wt}  
\pgfmathsetmacro{\hratio}{\hht / \wt}  
\pgfmathsetmacro{\ihratio}{\hwt / \hht}  
  \draw[dashed] (2*\wt,0) arc [start angle=0, end angle=180, x radius=\wt cm, y radius=\hwt cm];
    \draw (0,0) arc [start angle=-180, end angle=0, x radius=\wt cm, y radius=\hwt cm];
     \draw (0,0) -- (\wt,\hht) -- (2*\wt,0);
\foreach \angle in {30,50,65,80}{
  \pgfmathsetmacro{\xa}{\wt+\wt*cos(-\angle)}  
  \pgfmathsetmacro{\ya}{\hwt*sin(-\angle)}  
  \pgfmathsetmacro{\xb}{\wt+\wt*cos(\angle-180)}  
  \pgfmathsetmacro{\yb}{\hwt*sin(\angle-180)}  
  \pgfmathsetmacro{\xxa}{0.5*\xa+\ihratio*\ya-0.09}
  \pgfmathsetmacro{\yya}{0.5*\hratio*\xa+0.5*\ya}
  \pgfmathsetmacro{\xxb}{0.5*\xb+\wt-\ihratio*\yb+0.09}
  \pgfmathsetmacro{\yyb}{0.5*\hratio*\xa + 0.5*\ya}
\draw[invarrow=0.1] (\xa,\ya) -- (\xxa,\yya) ; 
  \draw[arrow=0.1] (\xb,\yb) -- (\xxb,\yyb) ; 
  \pgfmathsetmacro{\xxr}{0.5*\xxa-0.5*\xxb}
  \pgfmathsetmacro{\yyr}{-0.2*\xxr}
  \draw[dashed] (\xxb,\yyb) arc [start angle=180, end angle=0, x radius=\xxr cm, y radius=\yyr cm];
  }
\end{tikzpicture}
\end{align}
The points where the lattice intersects the ``base'' of the cone will be called {\it boundary points}. On this square lattice we draw coloured paths which are either closed or begin and end on two boundary points. The former will sometimes be called loops and the latter boundary paths. The colours of paths will be labelled by the set $\mathcal{I}=\{1\ldots n+m\}$ where the labels $i$ have an associated $\mathbb Z_2$ grading given by $(-1)^{\delta_{i>n}}$\footnote{We adopt this grading in the introduction for simplicity; a more general grading will be assumed in the main text.} with $\delta_{\text{True}}=1$ and $\delta_{\text{False}}=0$. This grading leads to the interpretation of ``bosonic'' and ``fermionic'' paths. Drawing a coloured path on this lattice should follow the directions indicated by the arrows in \eqref{eq:cone}.  In the following example we have a configuration with one red loop, one green loop, one green boundary path and one blue boundary path: 
\begin{align}\label{eq:cone_paths}
\begin{tikzpicture}[scale=0.7,transform shape]
\newcommand{\hht}{4}  
\newcommand{\wt}{3}  
\newcommand{\hwt}{0.5\wt}  
\pgfmathsetmacro{\hratio}{\hht / \wt}  
\pgfmathsetmacro{\ihratio}{\hwt / \hht}  
  \draw[dashed] (2*\wt,0) arc [start angle=0, end angle=180, x radius=\wt cm, y radius=\hwt cm];
    \draw (0,0) arc [start angle=-180, end angle=0, x radius=\wt cm, y radius=\hwt cm];
     \draw (0,0) -- (\wt,\hht) -- (2*\wt,0);
\foreach \angle/\lab in {30/4,50/3,65/2,80/1}{
  \pgfmathsetmacro{\xa}{\wt+\wt*cos(-\angle)}  
  \pgfmathsetmacro{\ya}{\hwt*sin(-\angle)}  
  \pgfmathsetmacro{\xb}{\wt+\wt*cos(\angle-180)}  
  \pgfmathsetmacro{\yb}{\hwt*sin(\angle-180)}  
  \node[below] at (\xa,\ya) {$\alpha_\lab$}; 
  \node[below] at (\xb,\yb) {$\beta_\lab$}; 
  \node[below] at (\xa,\ya-0.5) {$z_\lab$}; 
  \node[below] at (\xb,\yb-0.5) {$q z_\lab$}; 
  \pgfmathsetmacro{\xxa}{0.5*\xa+\ihratio*\ya-0.09}
  \pgfmathsetmacro{\yya}{0.5*\hratio*\xa+0.5*\ya}
  \pgfmathsetmacro{\xxb}{0.5*\xb+\wt-\ihratio*\yb+0.09}
  \pgfmathsetmacro{\yyb}{0.5*\hratio*\xa + 0.5*\ya}
\draw[invarrow=0.1] (\xa,\ya) -- (\xxa,\yya) ; 
  \draw[arrow=0.1] (\xb,\yb) -- (\xxb,\yyb) ; 
  \pgfmathsetmacro{\xxr}{0.5*\xxa-0.5*\xxb}
  \pgfmathsetmacro{\yyr}{-0.2*\xxr}
  \draw[dashed] (\xxb,\yyb) arc [start angle=180, end angle=0, x radius=\xxr cm, y radius=\yyr cm];
  }
\pgfmathsetmacro{\xa}{\wt+\wt*cos(-80)}
\pgfmathsetmacro{\ya}{\hwt*sin(-80)}
\pgfmathsetmacro{\xb}{\wt+\wt*cos(80-180)}  
  \pgfmathsetmacro{\yb}{\hwt*sin(80-180)}  
  \pgfmathsetmacro{\xxa}{0.5*\xa+\ihratio*\ya-0.09}
  \pgfmathsetmacro{\yya}{0.5*\hratio*\xa+0.5*\ya}
\pgfmathsetmacro{\xxb}{0.5*\xb+\wt-\ihratio*\yb+0.09}
  \pgfmathsetmacro{\yyb}{0.5*\hratio*\xa + 0.5*\ya}
  \pgfmathsetmacro{\xxr}{0.5*\xxa-0.5*\xxb}  
  \pgfmathsetmacro{\yyr}{-0.2*\xxr}
\draw[dashed, red, line width=0.3mm] (\xxb,\yyb) arc [start angle=180, end angle=0, x radius=\xxr cm, y radius=\yyr cm];  
\pgfmathsetmacro{\xa}{\wt+\wt*cos(-50)}
\pgfmathsetmacro{\ya}{\hwt*sin(-50)}
\pgfmathsetmacro{\xxa}{0.5*\xa+\ihratio*\ya-0.09}
  \pgfmathsetmacro{\yya}{0.5*\hratio*\xa+0.5*\ya}
\draw[gcol, line width=0.5mm, rounded corners=0.4mm] (\xa-0.9,\ya+3) -- (\xa-1.57,\ya+2.13) --(\xxa+0.3,\yya-0.4)--(\xxa-0.05,\yya-0.87) --(\xxa-0.36,\yya-0.45);
\pgfmathsetmacro{\xa}{\wt+\wt*cos(-65)}
\pgfmathsetmacro{\ya}{\hwt*sin(-65)}
\pgfmathsetmacro{\xb}{\wt+\wt*cos(65-180)}  
  \pgfmathsetmacro{\yb}{\hwt*sin(65-180)}  
  \pgfmathsetmacro{\xxa}{0.5*\xa+\ihratio*\ya-0.09}
  \pgfmathsetmacro{\yya}{0.5*\hratio*\xa+0.5*\ya}
\pgfmathsetmacro{\xxb}{0.5*\xb+\wt-\ihratio*\yb+0.09}
  \pgfmathsetmacro{\yyb}{0.5*\hratio*\xa + 0.5*\ya}
  \pgfmathsetmacro{\xxr}{0.5*\xxa-0.5*\xxb}
  \pgfmathsetmacro{\yyr}{-0.2*\xxr}
\draw[dashed, gcol, line width=0.3mm] (\xxb,\yyb) arc [start angle=180, end angle=0, x radius=\xxr cm, y radius=\yyr cm];
\draw[red, line width=0.5mm, rounded corners=0.4mm] (1.57,2.1) -- (2.26,1.19) -- (3.38,2.66)-- (4.13,1.7) -- (4.4,2.1) ;
\draw[gcol, line width=0.5mm, rounded corners=0.4mm](0.42,-0.26) -- (2.27,2.19) -- (4.28,-0.47);
\draw[blue, line width=0.5mm, rounded corners=0.4mm] (1.07,-0.4) -- (2.26,1.16) -- (2.62,0.72) -- (3.35,1.7) -- (4.9,-0.4);
\end{tikzpicture}
\end{align}
What may appear as an intersection of two green lines in the above picture should be interpreted as a touching of a green loop and a green boundary path. We can take the labeling set of the colours to be $\mathcal{I}=\{1,2,3,4\}$ with the identification $\{$no path, red, green, blue$\} =\{1,2,3,4\}$. The occupation of the boundary points by paths is recorded by $\alpha=(\alpha_1\ldots \alpha_4)$ and $\beta=(\beta_1\ldots \beta_4)$, so that we have  $\alpha=(1,3,4,1)$ and $\beta=(1,1,4,3)$. The parameters $z_1\ldots  z_4$ and $q z_1\ldots q z_4$ are called the {\it spectral parameters}, they are ``carried'' by the directed lines of the lattice which are directly above them. The winding of a lattice line around the tip of the cone changes the value of the corresponding spectral parameter by a factor of $q$. These parameters will enter the Boltzmann weights as discussed below.

By specifying $n,m$ and $N$ we can draw all possible path configurations. These configurations can be grouped according to specific occupations of the boundary points given by $\alpha$ and $\beta$. Each configuration $C$ carries a weight $W_C$ which is computed by multiplying the values of all local Boltzmann weights\footnote{These Boltzmann weights correspond to the coefficients of the $R$-matrix of the algebra $U_{t}(\widehat{\fgl}_{n|m})$ computed in \cite{BazS}.}: 
\begin{equation}\label{tikz:coloredvertices_intro}
\begin{tabular}{c@{\hskip 0.7cm}c@{\hskip 0.7cm}c@{\hskip 0.7cm}c@{\hskip 0.7cm}c}
\begin{tikzpicture}[scale=0.6,baseline=-2pt]
\draw[invarrow=0.75] (-1,0) --(1,0);
\draw[invarrow=0.75,] (0,-1) --(0,1);
\node[right] at (1,0) {$\scriptstyle x$};
\node[above] at (0,1) {$\scriptstyle y$};
\end{tikzpicture}:
&
\begin{tikzpicture}[scale=0.6,baseline=-2pt]
\draw[red, ultra thick, rounded corners=0.7mm] (-1,0) node[left,black]{$\scriptstyle i$}  -- (0,0) -- (0,1) node[above,black]{$\scriptstyle i$};
\draw[red, ultra thick, rounded corners=0.7mm] (0,-1)node[below,black]{$\scriptstyle i$}  -- (0,0) --(1,0)node[right,black]{$\scriptstyle i$};
\end{tikzpicture}
&
\begin{tikzpicture}[scale=0.6,baseline=-2pt]
\draw[red, ultra thick, rounded corners=0.7mm] (-1,0) node[left,black]{$\scriptstyle i$}  -- (0,0)--(0,1) node[above,black]{$\scriptstyle i$};
\draw[gcol, ultra thick, rounded corners=0.7mm] (0,-1) node[below,black]{$\scriptstyle j$} --(0,0)-- (1,0)node[right,black]{$\scriptstyle j$};
\end{tikzpicture}
&
\begin{tikzpicture}[scale=0.6,baseline=-2pt]
\draw[gcol, ultra thick] (-1,0) node[left,black]{$\scriptstyle j$}  -- (1,0) node[right,black]{$\scriptstyle j$};
\draw[red, ultra thick] (0,-1)node[below,black]{$\scriptstyle i$}  --(0,1) node[above,black]{$\scriptstyle i$};
\end{tikzpicture}
\\[3em]
&$
 \begin{cases}
  \dfrac{t^{-1/2}-t^{1/2}x/y}{1-x/y}  & i\leq n\\
  \dfrac{t^{-1/2}x/y-t^{1/2}}{1-x/y}  & i> n
\end{cases}
$
&$
 \begin{cases}
  \dfrac{(t^{-1/2}-t^{1/2})x/y}{1-x/y}  & i<j\\
  \dfrac{(t^{-1/2}-t^{1/2})}{1-x/y}  & i>j
\end{cases}
$
&$1$
\end{tabular}
\end{equation}
where red ``$i$'' and green ``$j$'' can be replaced by any pair of distinct colours. We attach the label $1\in \mathcal{I}$ to the edges which have no path. The vertex with all labels equal to $i$ is interpreted as two paths of the same colour $i$ touching each other but not intersecting. The weight of this vertex depends on whether the label $i$ is bosonic or fermionic which is determined by the gradation. The parameters $x$ and $y$ are the spectral parameters. These parameters should be replaced by the appropriate $z_a$ and $q z_b$ carried by the lattice lines.

For a fixed $N$, $\mathcal{I}$ and $\alpha,\beta \in \mathcal I^N$, the collection of all global configurations on the $N$ by $N$ lattice \eqref{eq:cone} with such bosonic and fermionic paths is denoted by $\Omega_{\alpha,\beta}$, where  the two indices $\alpha$ and $\beta$  specify the boundary conditions at the base of the cone as explained under \eqref{eq:cone_paths}. In addition to the local weights \eqref{tikz:coloredvertices_intro} each configuration $C$ is multiplied by a factor which accounts for the closed loops content of the configuration, see \eqref{eq:Z}. The partition function of such lattice paths is defined by:
\begin{align}\label{eq:Z}
  Z_{\alpha,\beta} = \sum_{C\in \Omega_{\alpha,\beta}}  
  \prod_{i\in \mathcal{I}}   u_i^{m_i(C)}
    \times W_C
\end{align}
where $m_i(C)$ denotes the total number of loops of colour $i$ in $C$. Therefore, the new variables $u_i$ count bosonic loops for $i\leq n$ and fermionic loops for $i>n$.

Let $V\cong \mathbb C^{n+m}$ have a standard orthonormal basis with vectors labeled by $\mathcal{I}$. For the basis vectors of $V^{\otimes N}$ we use $\alpha\in \mathcal{I}^N$ as labels. Then $Z_{\alpha,\beta}$ with different $\alpha$ and $\beta$ can be combined to form a matrix $Z_N$:
\begin{align}
    \label{eq:Z_N-intro}
    Z_N = \sum_{\alpha,\beta\in \mathcal{I}^N} Z_{\alpha,\beta} \ket{\alpha}\bra{\beta}
\end{align}
Furthermore, the various $Z_N$ can be collected into a generating function:
\begin{align}
    \label{eq:Z(v)-intro}
    Z(v)= \sum_{N= 0}^\infty  Z_{N} v^N
\end{align}
One of the main results of this paper is an expression for $Z(v)$ in terms of a ``shuffle exponential''\footnote{In the main text we will write $\exp$ for $\exp_*$.}: 
\begin{align*}
\exp_*(A):= 1 + A + \frac{1}{2!}  A*A + \frac{1}{3!}  A*A*A +\cdots 
\end{align*}
where $*$ is the shuffle product of the matrix shuffle algebra associated to the quantum toroidal $\fgl_{n|m}$, see \eqref{eqn:shuffle product}.
\begin{thm}\label{thm:Z_intro}
For any $n,m \geq 0$, the generating function $Z(v)$ can be expressed as:
    \begin{align}
    \label{eq:Zexp}
    Z(v) = 
    \exp_* \left(\sum_{k=1}
    ^{\infty}\frac{v^k}{k}
    \sum_{i=1}^{n+m}  \left(y_{i+1}^k - t^{\epsilon_{i} k} y_{i}^k \right)S_k^{(i)}
    \right)
    \end{align}
where $y_i:=t^{-\epsilon_i/2}\epsilon_i u_i$, $u_{n+m+1}:=q u_1$, $\epsilon_i=(-1)^{\delta_{i>n}}$,  and $S_k^{(1)}\ldots S_k^{(n+m)} \in \emph{End}(V^{\otimes k})(z_1\ldots z_k)$ is a collection of matrix-valued rational functions which can be computed recursively: 
\begin{equation}
\label{eqn:power comm introduction}
(z_1+\dots+z_k) S_k^{(i)} = \left[S_{k-1}^{(i)},z_1 S_{1}^{(i)} \right],
\qquad
S_{1}^{(i)} = \frac{-1}{1-q}\normalfont{\text{diag}}\{\overset{i}{\overbrace{1\ldots 1}},q\ldots q \}
\end{equation}
where $[a,b]=a*b-b*a$. Theorem \ref{thm:Z_intro} is contingent on Conjecture \ref{conj:super}, which was proved for $m=0$ (\!\!\cite{Ng_tale}).
\end{thm}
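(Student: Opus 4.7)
My plan is to first rewrite the conic partition function $Z_N$ as a partial trace of a product of $R$-matrices of $\suu$. The conic lattice \eqref{eq:cone} naturally encodes such a trace: the vertical strands, which wrap once around the tip of the cone before crossing each other, form auxiliary spaces over which one traces, while the quantum spaces $V^{\otimes N}$ are indexed by the boundary labels $\alpha,\beta$. The spectral parameter shift by $q$ at the top of the cone introduces a twist operator in the trace, and the loop-counting variables $u_i$ enter as a grading operator in each auxiliary space that assigns weight $u_i$ to a strand in state $i\in \mathcal{I}$.

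Next, I would use the shuffle-algebra description of the matrix shuffle algebra (via Conjecture \ref{conj:super}) and the anti-homomorphism constructed earlier in the paper to rewrite this partial trace as a shuffle-algebra expression. The key mechanism, generalizing \cite{GG} from $\fgl_1$ to $\fgl_{n|m}$, is that the cyclic trace of $R$-matrices, when expanded over all auxiliary states, rearranges itself into a shuffle product of elementary matrix-valued rational functions. Summing over $N$ with the weight $v^N$ then produces a generating series of the form $\exp_*(F(v))$: the exponential appears because independent closed loops contribute multiplicatively to the trace, so the logarithm $F(v)$ counts only ``connected'' (single-loop) contributions, which is precisely what the $S_k^{(i)}$ should parametrize.

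To identify $F(v)$ with the stated expression, I would first show that the recursion \eqref{eqn:power comm introduction} together with the explicit $S_1^{(i)}$ uniquely determines $S_k^{(i)}$ as matrix-valued rational functions, then verify that the rational functions produced by the single-loop trace computation satisfy the same recursion. The coefficient $w_{i+1}^k - t^{\epsilon_i k} w_i^k$ of $S_k^{(i)}$ should emerge as a character difference between adjacent colors, encoded by the loop-weight assignments, with the identification $u_{n+m+1}=qu_1$ coming from the $q$-twist at the apex of the cone and the $t^{\epsilon_i k}$ factor arising from the bosonic/fermionic grading of loops of color $i$.

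The main obstacle will be the precise combinatorial bookkeeping in the step from the partial trace to the shuffle exponential. One has to simultaneously account for the grading (which affects both the Boltzmann weights \eqref{tikz:coloredvertices_intro} and the signs in the loop expansion), for the $q$-shift at the cone's apex (which is responsible for the subtraction pattern $w_{i+1}^k - t^{\epsilon_i k} w_i^k$ rather than a single $w_i^k$), and for the fact that the matrix shuffle product is an $R$-matrix-weighted symmetrization rather than an ordinary product. Verifying the recursion \eqref{eqn:power comm introduction} at the level of the trace formula should be possible by peeling off one $R$-matrix at a time using cyclicity of the trace and the Yang--Baxter equation, and I expect this is where the bulk of the computational work will live.
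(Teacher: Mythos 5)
Your opening step (writing $Z_N$ as the partial twisted trace $\Psi[\hat u^{\otimes N}]$ of a product of $\check R$-matrices) matches the paper exactly, but from there your plan diverges and has two genuine gaps. First, the shuffle exponential does not come from a ``connected loops = logarithm'' cluster expansion. That heuristic is about the ordinary product inside the trace, whereas \eqref{eq:Zexp} is an exponential with respect to the shuffle product $*$, and nothing in your sketch explains why loop-independence in the trace should translate into $\exp_*$. The paper's mechanism is algebraic: the coefficients of the $u$-monomials in $\hat u^{\otimes N}$ are themselves shuffle products $H_{\kappa_1}'^{(1)} *' \cdots *' H_{\kappa_{n+m}}'^{(n+m)}$ of the group-like elements $H_k'^{(i)} = E_{ii}^{\otimes k}$ in the commutative subalgebra $\CB'$ (Lemma \ref{lem:H shuffle product}); applying the anti-isomorphism $\Psi$ factors $Z(v)$ into $\Psi[H'^{(1)}(vu_1)] * \cdots * \Psi[H'^{(n+m)}(vu_{n+m})]$, and each factor is an exponential of power sums because of Proposition \ref{prop:exp}, i.e.\ because $\mathrm{lead}_{\ell}(H_k) = H_{\ell} \otimes H_{k-\ell}$ and $\CB^+ \cong \Lambda^{\otimes(n+m)}$. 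Without invoking this coproduct/group-like structure (or an equivalent), the appearance of $\exp_*$ is unjustified.

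Second, your plan to pin down the coefficients by ``verifying the recursion \eqref{eqn:power comm introduction} at the level of the trace formula, peeling off one $R$-matrix at a time'' will not close. The $S_k^{(i)}$ are not defined by the trace; they are defined through the linear relations \eqref{eqn:linear} with the power sums $P_k^{(i)}$, and the recursion \eqref{eqn:power comm introduction} is proved in Proposition \ref{prop:comm} by transporting the identity through the isomorphism $\UU \cong \CA \cong \CS$ to the classic shuffle algebra, where it reduces to the trivial statement $x_{i1}^k f = x_{i1}^{k-\ell} x_{i1}^{\ell} f$. The paper explicitly notes this is a ``quite non-trivial equality'' back in $\CA$, and it is precisely the step that forces the dependence on Conjecture \ref{conj:super} in the super case; there is no indication it admits a direct Yang--Baxter derivation. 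Likewise, the actual coefficients $w_{i+1}^k - t^{\epsilon_i k} w_i^k$ are not read off as a ``character difference'': they require applying the evaluation homomorphisms $\alpha_i$ of Definition \ref{def:evaluation} to $\Psi[E_{jj}^{\otimes k}]$, which is an explicit iterated-residue computation using the wheel-condition structure \eqref{eqn:residue (k)}, crossing unitarity, and the $q$-binomial theorem (Appendix \ref{app:evaluations Psi}). Your sketch identifies the right objects but is missing the machinery that actually determines both the exponential form and its coefficients.
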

The matrix elements of $S_k^{(i)}$ themselves can also be written as conic partition functions of the form \eqref{eq:Z} where the summation runs over all configurations with boundary paths coloured by $\{1\ldots n+m+2\}\setminus\{i+1,i+2\}$ and where the loops are allowed to be only of colours $i+1$ (bosonic) and $i+2$ (fermionic) and the $u$-factors of \eqref{eq:Z} must be replaced with $m_{i+2}(C)(-1)^{m_{i+2}(C)}$ (see \eqref{eq:S-trace intro}). 

For all $i,j,k,l$ we have the equality
$$
S_k^{(i)}*S_l^{(j)}=S_l^{(j)}*S_k^{(i)}
$$
Thus formula \eqref{eq:Zexp} is unambiguous, and moreover the matrices $S_k^{(i)}$ can be viewed as elements (in fact generators) of a commuting matrix shuffle algebra $\mathcal{B}^+$ that we recall in Subsection \ref{sub:slope 0}. The matrices $Z_N$ therefore also represent commuting elements of $\mathcal{B}^+$. 

Theorem \ref{thm:Z_intro} is a generalization of the shuffle exponential formula of \cite{GG} for the $\fgl_1$ shuffle algebra. Indeed, in \cite{GG} the definition of the partition function prevents having boundary paths. Choosing the boundary conditions at the base of the cylinder \eqref{eq:cone} to be free of paths corresponds to taking the vacuum-vacuum expectation value of $Z_N$:
\begin{align}
    \bra{1^N} Z_N \ket{1^N}
    \label{eq:Z-gl1}
\end{align}
By computing this expectation value of $Z(v)$ in \eqref{eq:Zexp} one can reproduce the exponential formula given in Theorem 1.1 of \cite{GG}.

Let us make a remark on a relation of such shuffle exponential formulas and integrable models. In the work \cite{GG} the authors showed that their shuffle exponential generating function of \eqref{eq:Z-gl1} with $n=0$ is equal to the (mixed) Cauchy kernel for the Macdonald functions \cite{FHSSY}. In the context of the Bethe ansatz of \cite{FJMM_BA} this mixed Cauchy kernel is interpreted as the off-shell Bethe wave function of the ``zero twist'' transfer matrix\footnote{The same exponential formula of \cite{GG} at $m=n$ actually produces the full off-shell Bethe wave function of \cite{FJMM_BA} with the general twist parameter $\tilde p$. To see this one must take the formula (6.8) in \cite{GG}, set in it $z_0=0$ and $z_i=\tilde p w_i$ and then compare the resulting expression with the off-shell Bethe vector from Theorem 5.5 in \cite{FJMM_BA}.}. Let us consider the $n=0$ case of \eqref{eq:Zexp} expressed in terms of generators $P_k^{(i)}\in \mathcal{B}^+$ which where introduced in \cite{Ng_tale}. These generators have a simple relation with $S_k^{(i)}$ (see Proposition \ref{prop:comm}):
\begin{align}
    P_k^{(i)} = q^{k\delta_{i=1}} S_k^{(i-1)}-S_k^{(i)}
\end{align}
where $S_k^{(0)}:=S_k^{(n+m)}$.
\begin{rmk}
    The expression in \eqref{eq:Zexp} at $n=0$ (all $\epsilon_i=-1$), $v=t$ and written using the generators $P_k^{(i)}$ reads:
    \begin{align}
    \label{eq:Zexp-nsm}
    Z(t) = 
    \exp_* \left(\sum_{k=1}
    ^{\infty}\frac{1}{k}\sum_{i,j=1}^{m} \frac{1-q^{\delta_{i=j}k}t^{k}}{1-q^k}q^{k\delta_{i>j}} P_k^{(i)}y_j^k\right)
    \end{align}
    We compare this formula with the reproducing kernel of the non-symmetric Macdonald polynomials from \cite{Mimachi-Noumi}
    \begin{align}\label{eq:NSM-kernel}
        E(x,y;q,t) &= \prod_{1\leq j<i\leq m}
    \frac{(q t x_i y_j;q)_\infty}{(q x_i y_j;q)_{\infty}}
    \prod_{1\leq i\leq m}
    \frac{(q t x_i y_i;q)_\infty}{(x_i y_i;q)_{\infty}}
    \prod_{1\leq i<j\leq m}
    \frac{(t x_i y_j;q)_\infty}{(x_i y_j;q)_{\infty}}\nonumber \\
    &= \exp \left(\sum_{k=1}
    ^{\infty}\frac{1}{k}\sum_{i,j=1}^{m} \frac{1-q^{\delta_{i=j}k}t^{k}}{1-q^k}q^{k\delta_{i>j}} x_i^k y_j^k \right) 
    \end{align}
    and find that the generator in \eqref{eq:Zexp-nsm}, of the fermionic partition functions \eqref{eq:cone_paths}, matches with the reproducing kernel of the non-symmetric Macdonald polynomials \eqref{eq:NSM-kernel} after the identification of $x^k_i$ with $P_k^{(i)}$.
\end{rmk}
Further investigation of connections between matrix shuffle algebras and Macdonald polynomials and their generalizations is beyond the scope of this paper.

\subsection{An anti-isomorphism of shuffle algebras}
Let us explain the underlying algebraic reasons for the formula \eqref{eq:Zexp}. We draw the directed lattice in \eqref{eq:cone} in a planar form and identify every crossing of the lattice lines with a matrix $\check R_{a,b}(z_a/(q z_b))\in\text{End}(V^{\otimes 2N})$, defined in Section \ref{sub:super}\footnote{We find it convenient to work with $\check R= R P$, with $P$ being the permutation matrix in $V\otimes V$.}. Then the partition function $Z_{\alpha,\beta}$ is given by\footnote{In order to recover \eqref{eq:cone} one needs to rotate the picture \eqref{eq:Z-trace} by $90$ degrees counter-clockwise and lay it down on the cone such that the point marked with the dot in \eqref{eq:Z-trace} matches with the tip of the cone. 
}:
\begin{align}\label{eq:Z-trace}
Z_{\alpha,\beta}
=
\begin{tikzpicture}[scale=0.5,baseline=(current  bounding  box.center)]
\rcheck{3}{4}
\rcheck{2}{3}
\rcheck{4}{3}
\rcheck{3}{2}
\rcheck{1}{2}
\rcheck{0}{1}
\rcheck{2}{1}
\rcheck{1}{0}
\rcheck{5}{2}
\rcheck{4}{1}
\rcheck{6}{1}
\rcheck{5}{0}
\rcheck{3}{0}
\rcheck{2}{-1}
\rcheck{4}{-1}
\rcheck{3}{-2}
\draw[arrow=0.166] (0,5) -- (0,2);
\draw[arrow=0.25] (1,5) -- (1,3);
\draw[arrow=0.5] (2,5) -- (2,4);
\draw[arrow=0.3] (3,5) -- (4,4);
\draw[arrow=0.834] (0,1) -- (0,-2);
\draw[arrow=0.75] (1,0) -- (1,-2);
\draw[arrow=0.5] (2,-1) -- (2,-2);
\draw[invarrow=0.3] (3,-2) -- (4,-1);
\draw [rounded corners=5pt] (7,2) -- (7.5,2) -- (7.5,1) -- (7,1);
\draw [rounded corners=5pt] (6,3) -- (8,3) -- (8,0) -- (6,0);
\draw [rounded corners=5pt] (5,4) -- (8.5,4) -- (8.5,-1) -- (5,-1);
\draw [rounded corners=5pt] (4,5) -- (9,5) -- (9,-2) -- (4,-2);
\node[below] at (0,-2) {$\scriptstyle{\alpha_1}$};
\node[below] at (1,-2) {$\scriptstyle{\cdots}$};
\node[below] at (2,-2) {$\scriptstyle{\cdots}$};
\node[below] at (3,-2) {$\scriptstyle{\alpha_N}$};
\node[above] at (0,5) {$\scriptstyle{\beta_1}$};
\node[above] at (1,5) {$\scriptstyle{\cdots}$};
\node[above] at (2,5) {$\scriptstyle{\cdots}$};
\node[above] at (3,5) {$\scriptstyle{\beta_N}$};
\node[below] at (0,-2-0.6) {$\scriptstyle{z_1}$};
\node[below] at (1,-2-0.6) {$\scriptstyle{\cdots}$};
\node[below] at (2,-2-0.6) {$\scriptstyle{\cdots}$};
\node[below] at (3,-2-0.6) {$\scriptstyle{z_N}$};
\node[above] at (0,5+0.6) {$\scriptstyle{q z_1}$};
\node[above] at (1,5+0.6) {$\scriptstyle{\cdots}$};
\node[above] at (2,5+0.6) {$\scriptstyle{\cdots}$};
\node[above] at (3,5+0.6) {$\scriptstyle{q z_N}$};
\node at (7,1.5) {$\scriptstyle{.}$};
\end{tikzpicture}
\end{align}
where the lines joining the bottom edges with the top edges of the right half of the $N$ by $N$ lattice are interpreted as the twisted trace in the vector spaces positioned at $N+1\ldots 2N$ within the tensor product $V^{\otimes 2N}$. The twist is given by the matrices $\hat u$:
\begin{align}\label{eq:u-hat introduction}
    \hat u := \sum_{i=1}^{n+m}u_i E_{ii}
\end{align}
which are implied in \eqref{eq:Z-trace}. The graphical representation of $Z_N$ is the same as \eqref{eq:Z-trace} but with the indices $\alpha$ and $\beta$ removed. 
Algebraically $Z_N$ reads:
\begin{align}\label{eq:Z-R introduction}
   Z_N = \Tr_{N+1\ldots 2N} 
   \left[
   \prod_{j=1}^N\prod_{i=1}^N \check{R}_{N+j-i}\left(\frac{z_{N-i+1}}{q z_j}\right)\cdot
\text{id}^{\otimes N}\otimes \hat u^{\otimes N}
\right]
\end{align}
Consider any $X\in \text{End}(V^{\otimes N}) (z_1 \ldots z_N)$. In Section \ref{sec:Psi} we show that the assignment:
\begin{align}\label{eq:Psi-intro}
\Psi\left [ X\right]:=  
 \Tr_{N+1\ldots 2N} 
   \left[
   \prod_{j=1}^N\prod_{i=1}^N \check{R}_{N+j-i}\left(\frac{z_{N-i+1}}{q z_j}\right)\cdot
\text{id}^{\otimes N}\otimes  X
\right]
\end{align}
realizes an anti-isomorphism of the matrix shuffle algebras $\CA' \xrightarrow{\sim} \CA^+$ given  by $X \mapsto \Psi[X]$. Here $\CA^+$ is a matrix shuffle algebra defined in Section \ref{sub:symmetric tensors} and $\CA'$ is a matrix shuffle algebra which is related to the dual of $\CA^+$ by a simple transformation (see Section \ref{sec:A-prime}).  The diagonal matrix $\hat u^{\otimes N}$ in \eqref{eq:Z-R introduction} is a generator of tensors composed of diagonal matrix units $E_{ii}$:
\begin{align}\label{eq:u-sum introduction}
    \hat u^{\otimes N} = \sum_{\lambda\in \mathcal{I}^N}  \bigotimes_{i\in \lambda} u_i E_{ii}
\end{align}
(see Lemma \ref{lem:H shuffle product}). Let $\mu=(\mu_1\ldots \mu_{n+m})$ be a composition of non-negative integers s.t. $\mu_1+\cdots +\mu_{n+m}=N$. We define elements $X_\mu$ as follows:
\begin{align}\label{eq:X-lambda}
X_\mu=\text{coefficient of }u_{\mu_1}\cdots u_{\mu_{n+m}} \text{ in }\hat u^{\otimes N}
\end{align}
In the case of the shuffle algebra $\mathcal{A}'$ of quantum toroidal $\fgl_n$ it is easy to show that such $X_\mu$ are commuting $X_\mu\in \mathcal B'$. In Section \ref{sec:Commutative} we explain that this should also hold in the case of quantum toroidal $\fgl_{n|m}$. 
Since $X_\mu\in\mathcal{B}' \subset \mathcal{A}'$, then by the anti-isomorphism property of $\Psi$ we will have that $\Psi[X_\mu]\in \mathcal{B}^+$. Therefore the generating function $Z(v)$ in \eqref{eq:Zexp} represents a particular example of the application of the anti-isomorphism $\Psi$ to the elements \eqref{eq:X-lambda} of the commuting subalgebra $\mathcal{B}'$ of $\mathcal{A}'$.  
 
\subsection{Formulas for elements of commuting matrix shuffle algebra of the quantum toroidal $\fgl_{n|m}$}
The generating function \eqref{eq:Zexp} gives rise to many formulas for various families of elements of the commuting shuffle algebra $\mathcal{B}^+$. All of these formulas have the form of a partial twisted trace of products of $\check R$-matrices similar to \eqref{eq:Z-R introduction}. The example discussed above gives such trace formulas for a family of commuting elements labelled by non-negative integer compositions $\mu=(\mu_1\ldots \mu_{n+m})$, namely 
\[
\Psi\left[\sum_{\substack{\lambda\in \mathcal I^N\\ m(\lambda)=\mu}}\bigotimes_{i=1}^N E_{\lambda_i \lambda_i}\right]
\]
where $m(\lambda)$ is the vector of multiplicities $m_i(\lambda)$ of $i$ in $\lambda$. In this example the tensors $\Psi[X]$, $X$ and the product of $\check R$'s are all endomorphisms of a tensor product of $N$ copies of the same vector space $V$. This is a special case of a more general map:
\begin{align}\label{eq:Psi-tilde intro}
       \widetilde\Psi[X]= \Tr_{N+1\ldots 2N}\left[\text{projection}\left(\prod_{j=1}^N\prod_{i=1}^N \check{R}''_{N+j-i}\left(\frac{z_{N-i+1}}{q z_j}\right)\right) \cdot \id^{\otimes N}\otimes X\right]\in \text{End}(V^{\otimes N})
\end{align}
where $X\in \text{End}(V'^{\otimes N})$ with $V'\cong \mathbb C^{n'+m'}$ for some $n',m'\geq 0$. The $R$-matrix  $\check R''$ is associated with $U_t(\dot{\fgl}_{n''|m''})$ with $n''\geq n$ and $m''\geq m$ and acts on $V''\otimes V''$, with $V'' \cong \mathbb C^{n''+m''}$ and we identify $V$ and $V'$ as coordinate subspaces of $V''$. The projection in \eqref{eq:Psi-tilde intro} is such that:
\begin{align}
       \text{projection}\left(\prod_{j=1}^N\prod_{i=1}^N \check{R}''_{N+j-i}\left(\frac{z_{N-i+1}}{q z_j}\right)\right): \text{End}(V^{\otimes N})\otimes \text{End}(V'^{\otimes N}) \to 
       \text{End}(V^{\otimes N})\otimes \text{End}(V'^{\otimes N})
\end{align}
The trace in \eqref{eq:Psi-tilde intro} is taken over the $N$ tensor factors of the vector spaces $V'$. This set up allows one to compute a variety of elements of the commuting subalgebra $\CB^+$ that are exponentially generated by $S_k^{(i)}$. In Section \ref{sec:Psi-tilde computation} we discuss such examples and in addition we demonstrate how to use $\widetilde \Psi$ to compute the elements $S_N^{(i)}$. The result of this computation is the following formula for the matrix elements $S_{\alpha,\beta}^{(i)}$ of $S_N^{(i)}$:
\begin{align}\label{eq:S-trace intro}
S_{\alpha,\beta}^{(i)}
=
\sum_{\gamma\in \{i+1,i+2\}^N}
\frac{(-1)^{m_{i+2}(\gamma)}m_{i+2}(\gamma)}{t^{N/2}-t^{-N/2}}
\begin{tikzpicture}[scale=0.5,baseline=(current  bounding  box.center)]
\rcheck{3}{4}
\rcheck{2}{3}
\rcheck{4}{3}
\rcheck{3}{2}
\rcheck{1}{2}
\rcheck{0}{1}
\rcheck{2}{1}
\rcheck{1}{0}
\rcheck{5}{2}
\rcheck{4}{1}
\rcheck{6}{1}
\rcheck{5}{0}
\rcheck{3}{0}
\rcheck{2}{-1}
\rcheck{4}{-1}
\rcheck{3}{-2}
\draw[arrow=0.166] (0,5) -- (0,2);
\draw[arrow=0.25] (1,5) -- (1,3);
\draw[arrow=0.5] (2,5) -- (2,4);
\draw[arrow=0.3] (3,5) -- (4,4);
\draw[arrow=0.834] (0,1) -- (0,-2);
\draw[arrow=0.75] (1,0) -- (1,-2);
\draw[arrow=0.5] (2,-1) -- (2,-2);
\draw[invarrow=0.3] (3,-2) -- (4,-1);
\node[below] at (0,-2) {$\scriptstyle{\iota(\alpha_1)}$};
\node[below] at (1.1,-2) {$\scriptstyle{\cdots}$};
\node[below] at (1.9,-2) {$\scriptstyle{\cdots}$};
\node[below] at (3,-2) {$\scriptstyle{\iota(\alpha_N)}$};
\node[above] at (0,5) {$\scriptstyle{\iota(\beta_1)}$};
\node[above] at (1,5) {$\scriptstyle{\cdots}$};
\node[above] at (1.9,5) {$\scriptstyle{\cdots}$};
\node[above] at (3,5) {$\scriptstyle{\iota(\beta_N)}$};
\node[below] at (4.4,-2) {$\scriptstyle{\gamma_1}$};
\node[below] at (5.2,-1) {$\scriptstyle{\cdots}$};
\node[below] at (6.2,0) {$\scriptstyle{\cdots}$};
\node[below] at (7.2,1) {$\scriptstyle{\gamma_N}$};
\node[above] at (4.4,5) {$\scriptstyle{\gamma_1}$};
\node[above] at (5.2,4) {$\scriptstyle{\cdots}$};
\node[above] at (6.2,3) {$\scriptstyle{\cdots}$};
\node[above] at (7.2,2) {$\scriptstyle{\gamma_N}$};
\node[below] at (0,-2-1) {$\scriptstyle{z_1}$};
\node[below] at (1,-2-1) {$\scriptstyle{\cdots}$};
\node[below] at (2,-2-1) {$\scriptstyle{\cdots}$};
\node[below] at (3,-2-1) {$\scriptstyle{z_N}$};
\node[above] at (0,5+1) {$\scriptstyle{q z_1}$};
\node[above] at (1,5+1) {$\scriptstyle{\cdots}$};
\node[above] at (2,5+1) {$\scriptstyle{\cdots}$};
\node[above] at (3,5+1) {$\scriptstyle{q z_N}$};
\end{tikzpicture}
\end{align}
where the crosses are interpreted as the $\check R$-matrix of $U_t(\dot{\fgl}_{n+1|m+1})$, the space $V''$ has the labeling set $\mathcal{I}''=\{1\ldots n+m+2\}$. The gradation on $V''$ is given by $\epsilon''_j=\epsilon_j$ for $j\leq i$, $\epsilon''_{i+1}=1$, $\epsilon''_{i+2}=-1$ and $\epsilon''_{j}=\epsilon_{j-2}$ for $j>i+2$. The map $\iota: \mathcal{I}\to \mathcal{I}''$ is given by $\iota(j)=j$ for $j\leq i$ and $\iota(j)=j+2$ for $j>i$. Note that the summation over $\gamma$ in \eqref{eq:S-trace intro} can be interpreted as the trace  over $N$ two-dimensional subspaces of $V''$. In the language of lattice paths of \eqref{eq:cone_paths} the sum in \eqref{eq:S-trace intro} corresponds the partition function whose boundary paths are labelled by $\{1\ldots n+m+2\}\setminus \{i+1,i+2\}$ and the loops are labelled by $\{i+1,i+2\}$ with $i+1$ being bosonic and $i+2$ fermionic.

\subsection{Overview of the paper} In Section \ref{sec:shuffle}, we review the matrix shuffle algebra of type $\fgl_n$ and present its conjectural generalization to type $\fgl_{n|m}$. In Section \ref{sec:Psi}, we describe the anti-isomorphism $\Psi$ (and its relatives such as $\widetilde{\Psi}$) together with its graphical interpretation, which serves as the algebraic bedrock of our main results. In Section \ref{sec:Commutative}, we describe the commutative subalgebra $\CB^+$ of the matrix shuffle algebra $\CA^+$; inside $\CB^+$ we identify the elements $S_k^{(i)}$, which allows us to prove Theorem \ref{thm:Z_intro}.


\section{The matrix shuffle algebra of quantum toroidal $\fgl_{n|m}$}
\label{sec:shuffle}

In this section we recall the matrix shuffle algebra and its relation with the quantum toroidal $\fgl_n$ algebra \cite{Ng_tale}. We will then explain the generalization to the case of $\fgl_{n|m}$, of which many steps are conjectural (however, we outline the main challenges that need to be solved). After that we will introduce a diagrammatic representation of the elements of the shuffle algebra and the shuffle product which will be used in subsequent sections.

\subsection{The $R$-matrix}\label{sec:R-matrix}
Let $V = \BC^n$ and let $E_{ij}$ denote the $n \times n$ matrix with a single 1 at the intersection of row $i$ and column $j$, and zeroes everywhere else. 
The $R$-matrix $R(z/w) \in \text{End}(V_w \otimes V_z)$ of $\uu$ is given by:
\begin{align}\label{eq:R-matrix}
  R(z) := \sum_{1\leq i,j\leq n}
  \left(\frac{t^{-1/2}-z t^{1/2}}{1-z}\right)^{\delta_{i=j}}E_{ii}\otimes E_{jj}  
  +
  \sum_{1\leq i\neq j\leq n}
  \frac{(t^{-1/2}-t^{1/2})z^{\delta_{i<j}}}{1-z}E_{ij}\otimes E_{ji}  
\end{align}
Let $P \in \text{End}(V \otimes V)$ be the permutation matrix. The matrix $\check R$ is defined by:
\begin{align}\label{eq:R and R-check}
    \check{R}(z) := R(z) P
\end{align}
We define the matrix $\check R_i(z_{i+1}/z_i)\in \text{End}(V_{z_1}\otimes \cdots \otimes V_{z_N})$ by the action of $\check R(z_{i+1}/z_i)$ on $V_{z_i}\otimes V_{z_{i+1}}$ and by the identity on all other tensor factors. Similarly $P_i\in \text{End}(V_{z_1}\otimes \cdots \otimes V_{z_N})$ denotes the action by $P$ on $V_{z_i}\otimes V_{z_{i+1}}$ and by the identity on the other factors. For $1\leq i<j\leq N$ we define $\check R_{i,j}(z)$ as $\check R$ acting in the $i$-th and $j$-th tensor factors. Another important matrix is $\check R^\bullet$ defined by:
\begin{align}
    \label{eq:R-bullet}
    \check{R}^\bullet(z):=(D\otimes \id) \check{R} (z t^{-n}) (\id\otimes D^{-1})
\end{align}
where:
\begin{align}
    \label{eq:D}
    D := \normalfont{\text{diag}}\{1,t^{-1}\ldots t^{-n+1}\}
\end{align}
The matrix $\check R^\bullet_{i,j}\in \text{End}(V_{z_1}\otimes \cdots \otimes V_{z_N})$ is defined in the same way as $\check R_{i,j}$.

The matrix $\check R$ satisfies the Yang--Baxter equation, the unitarity relation and has a simple pole at $z=1$:
\begin{align}
    \label{eq:YB}
    \check R_{i}(z/y)
    \check R_{i+1}(z/x)
    \check R_{i}(y/x)
    &=
    \check R_{i+1}(y/x)
    \check R_{i}(z/x)
    \check R_{i+1}(z/y)
\\
\label{eq:unitarity}
    \check R(x/y)
    \check R(y/x)
    &=f(x/y) \text{id}\otimes \text{id},\\
\label{eq:R residue}
    \underset{x=y}{\text{Res}}\,\check R(x/y) &= (t^{1/2}-t^{-1/2})\id \otimes \id     
\end{align}
where ``Res $f(z)$'' denotes the residue of the 1-form $f(z)\frac {dz}z$, and
\begin{align}
    \label{eq:f}
    f(x)=\frac{(1-t x)(1-t^{-1}x)}{(1-x)^2}
\end{align}
Additionally, the matrices $\check R$ and $\check R^\bullet$ satisfy the {\it crossing unitarity relation}:
\begin{align}
    \label{eq:crossing_unitarity}
\left(\check{R}^\bullet(x) P
    \right)^{T_2}
    \left(P \check{R}(x^{-1})
    \right)^{T_2} =\id\otimes \id
\end{align}
where $T_2$ denotes the transposition in the second tensor factor.

\subsection{Symmetric tensors and wheel conditions}
\label{sub:symmetric tensors}

Consider the vector space:
\begin{equation}
\label{eqn:big space}
\CV = \bigoplus_{k=0}^{\infty} \text{End}_{\BC(q,t)}(V^{\otimes k}) (z_1 \ldots z_k)
\end{equation}
For a fixed  $k\geq 0$ we will sometimes write $(z)$ instead of $(z_1\ldots z_k)$, for simplicity. Similarly, if $\sigma$ is a permutation in the symmetric group $\mathfrak{S}_k$ then we will write $(z_\sigma)=(z_{\sigma(1)} \ldots z_{\sigma(k)})$. Fix $k\geq 0$, a tensor $X=X_{1\dots k}(z_1 \ldots z_k)$ will be called symmetric if for all $\sigma \in \mathfrak{S}_k$ we have:
\begin{equation}
\label{eqn:symmetric tensor}
\check R_\sigma(z) 
X_{1\ldots k}(z) = 
X_{1\ldots k}(z_{\sigma})
\check R_\sigma(z)
\end{equation}
where the matrix $\check R_{\sigma}(z) = \check R_{\sigma} (z_1\ldots z_k)$ is defined recursively by
\begin{align}
\label{eq:Rch-sigma}
&\check R_{\text{id}}(z) = 1,\qquad    
\check R_{\sigma}(z) = \check R_{i}(z_{\sigma'(i+1)}/z_{\sigma'(i)})
   \check R_{\sigma'}(z)
\end{align}
whenever $\sigma=s_i\sigma'$ is a reduced decomposition in $\mathfrak S_k$. We similarly define $\check R^\bullet_{\sigma}$ and $R_\sigma$; in particular, in terms of the latter matrices, formula \eqref{eqn:symmetric tensor} takes the form
$$
R_\sigma (z)
X_{1...k}(z) = 
X_{\sigma(1)\dots\sigma(k)}(z_{\sigma})
R_\sigma(z)
$$
Above and henceforth, we often write $X_{1\dots k}$ to indicate the fact that $X$ is a linear operator on $V^{\otimes k}$, and use the notation $X_{\sigma(1)\dots\sigma(k)}$ to denote the same linear operator conjugated by the permutation of the factors of $V^{\otimes k}$ corresponding to $\sigma \in \mathfrak S_k$. The subspace of symmetric tensors will be denoted by:
\begin{equation}
\label{eqn:symmetric space}
\CV_{\text{sym}} \subset \CV
\end{equation}
Furthermore, we say that a symmetric tensor satisfies the pole conditions if it is of the form:
\begin{equation}
\label{eqn:pole condition}
X_{1\dots k}(z_1\ldots z_k) = \frac {x_{1\dots k}(z_1\ldots z_k)}{\prod_{1\leq i \neq j \leq n} (z_i-z_jq)}
\end{equation}
for some Laurent polynomial:
$$
x_{1\dots k}(z_1 \dots z_k) \in \text{End}_{\BC(q,t)}(V^{\otimes k}) [z_1^{\pm 1} \dots z_k^{\pm 1}]
$$
The subspace of symmetric tensors which satisfy the pole conditions will be denoted by:
\begin{equation}
\label{eqn:pole space}
\CA^+_{\text{big}} \subset \CV_{\text{sym}}
\end{equation}

\begin{defn}
\label{def:wheel}
Consider any composition $\lambda$ of length $u$ and size $\lambda_1+\dots+\lambda_{u}=k$, and let $c_i = \lambda_1+\dots+\lambda_{i-1}+1$ for all $i \in \{1,\dots,u+1\}$. A tensor $X \in \CA^+_{\emph{big}} \cap \normalfont{\text{End}}_{\BC(q,t)}(V^{\otimes k}) (z_1 \ldots z_k)$ is said to satisfy the wheel conditions if, for every composition $\lambda$ as above, the iterated residue of $X$ at:
\begin{equation}
\label{eqn:residue variables}
\Big\{z_{\lambda_1+\dots+\lambda_{s-1}+1} = y_s, z_{\lambda_1+\dots+\lambda_{s-1}+2} = y_s q  \ldots  z_{\lambda_1+\dots+\lambda_{s-1}+\lambda_s} = y_s q^{\lambda_s-1} \Big\}_{1\leq s \leq u}
\end{equation}
is of the form:
\begin{equation}
\label{eqn:residue}
(t^{\frac 12} - t^{-\frac 12})^{k-u} \prod^{\text{unordered pairs } (s,d) \neq (t,e)}_{\text{with } 1 \leq s,t \leq u, 1 \leq d < \lambda_s, 1 \leq e < \lambda_t} f \left( \frac {y_s q^{d}}{y_t q^{e}} \right) \left[ \prod_{u \geq s \geq 1} \prod_{s \leq t \leq u} \prod_{1 \leq e < \lambda_t} R_{c_s,c_t+e} \left( \frac {y_s}{y_t q^{e}}\right) \right]  
\end{equation}
$$
 X^{(\lambda_1\ldots \lambda_u)}_{c_1 \ldots c_u}(y_{1}\ldots y_{u}) \left[ \prod_{1 \leq s \leq u} \prod_{u \geq t > s} \prod_{\lambda_t > e \geq 1} R_{c_t + e, c_s} \left( \frac {y_t q^{e}}{y_s q^{\lambda_s}} \right) \right] \left[ \prod_{s=1}^u P_{c_s} P_{c_s+1} \dots P_{c_{s+1}-2} \right]
$$
for some tensor $X^{(\lambda_1\ldots \lambda_u)} \in \emph{End}(V^{\otimes u})(y_1\ldots y_u)$. We write:
\begin{equation}
\label{eqn:wheel space}
\CA^+ \subset \CA^+_{\emph{big}}
\end{equation}
for the vector subspace of symmetric tensors which satisfy the pole and wheel conditions. 
\end{defn}

We remark that the pole conditions (which require that tensors $X$ have at most simple poles at $z_i = z_j q$) were precisely introduced in order for the residue at \eqref{eqn:residue variables} to be well-behaved. As shown in \cite[Proposition 4.11]{Ng_tale}, for any composition $\lambda=(\lambda_1\ldots \lambda_u)$ with $k = \lambda_1+\dots+\lambda_u$, the tensor $X^{\lambda}(y_{1}\ldots y_{u})$ that features in \eqref{eqn:residue} also satisfies the pole conditions in the variables $y_s$, and is symmetric in the sense of \eqref{eqn:symmetric tensor} with respect to all pairs of variables $y_s$ and $y_t$ for which $\lambda_s = \lambda_t$.

\begin{ex}\label{ex:wheel u=1}
    We consider the particular example of Definition \ref{def:wheel} with $u=1$ and $k=3$ 
    and compute the wheel condition for a tensor $X(z_1,z_2,z_3)\in \CA_{\text{big}}^+ \cap \normalfont{\text{End}}_{\BC(q,t)}(V^{\otimes 3}) (z_1,z_2,z_3)$. 
    We have $\lambda=(\lambda_1)=(3)$ and $(c_1,c_2)=(1,4)$, so the specialization in \eqref{eqn:residue variables} is given by:
    \begin{equation}
    \label{eqn:residue variables example first line}
    \left\{z_1 = y, z_2 =  q y,  z_3 = q^2 y\right\}
    \end{equation}
    where we set $y_1=y$. The first line in \eqref{eqn:residue} is given by
    \begin{equation*}
(t^{\frac 12} - t^{-\frac 12})^{2} \prod_{1 \leq e < d< 3} f \left(q^{d-e} \right) \left[ \prod_{1 \leq e < 3} R_{1,1+e} \left(q^{-e}\right) \right]  =
(t^{\frac 12} - t^{-\frac 12})^{2} 
f \left(q \right)
R_{1,2} \left(q^{-1}\right)
R_{1,3} \left(q^{-2}\right)
\end{equation*}
and the second line in \eqref{eqn:residue} is given by
\begin{equation*}
     X^{(3)}_{1}(y) \left[ \prod_{s=1}^u P_{c_s} P_{c_s+1} \dots P_{c_{s+1}-2} \right]  = X^{(3)}_{1}(y) P_1 P_2
\end{equation*}
Combining these two expressions, we conclude that the wheel condition with $\lambda=(3)$ for a tensor $X(z_1,z_2,z_3)$ requires that its residue be of the form:
\begin{equation}\label{eqn:residue variables example}
\underset{\left\{z_1 = y,z_2 =q y,z_{3} = q^{2} y\right\}}{\normalfont{\text{Res}}}X(z_1,z_2, z_3)=
(t^{\frac 12} - t^{-\frac 12})^{2} 
f \left(q \right)
R_{1,2} \left(q^{-1}\right)
R_{1,3} \left(q^{-2}\right)
 X^{(3)}_{1}(y) P_1 P_2
\end{equation}
for some $X^{(3)}(y) \in \emph{End}(V)(y)$. By commuting the permutation matrices $P_i$ to the left and combining them with the $R$-matrices we can rewrite this using the $\check R$ matrices \eqref{eq:R and R-check}:
\begin{equation}\label{eqn:residue variables example R-check}
\underset{\left\{z_1 = y,z_2 =q y,z_{3} = q^{2} y\right\}}{\normalfont{\text{Res}}}X(z_1,z_2, z_3)
=(t^{\frac 12} - t^{-\frac 12})^{2}
f \left(q \right)
\check R_1 \left(q^{-1}\right)
\check R_2 \left(q^{-2}\right)
 X^{(3)}_{3}(y)
\end{equation}

In a more general case $\lambda=(k)$ we have:
\begin{align}\label{eqn:residue (k)}
\underset{\left\{z_1 = y \ldots  z_{k} = q^{k-1} y\right\}}{\normalfont{\text{Res}}}X(z_1\ldots z_k)= (t^{\frac 12} - t^{-\frac 12})^{k-1} \prod_{j=1}^{k-2} f \left(q^j \right)^{k-1-j}
 \check R_{1} \left(q^{-1}\right) \cdots \check R_{k-1} \left(q^{-k+1}\right) X_k^{(k)}(y)
\end{align}
\end{ex}
\subsection{The shuffle product}
\label{sub:shuffle product}

We can make the vector space $\CV$ of \eqref{eqn:big space} into an associative algebra via the following formula for all $A = A_{1\ldots k}(z_1\ldots z_k)$, $B = B_{1\ldots l}(z_1\ldots z_l)$ in $\CV$:
\begin{equation}
\label{eqn:shuffle product}
A * B = \sum^{a_1<\ldots <a_k, \ b_1<\ldots <b_l}_{\{1\ldots k+l\} = \{a_1\ldots a_k\} \sqcup \{b_1\ldots b_l\}} \left[ \prod_{i=k}^{1} \prod_{j=1}^l \underbrace{R_{a_ib_j} \left( \frac {z_{a_i}}{z_{b_j}} \right)}_{\text{only if }a_i < b_j} \right] 
\end{equation}
$$
A_{a_1\ldots a_k}(z_{a_1}\ldots z_{a_k}) \left[ \prod_{i=1}^k \prod_{j=l}^{1} R_{b_ja_i}  \left( \frac {z_{b_j}}{z_{a_i} q} \right) \right] B_{b_1\ldots b_l}(z_{b_1}\ldots z_{b_l})  \left[  \prod_{i=k}^{1} \prod_{j=1}^l \underbrace{R_{a_ib_j} \left( \frac {z_{a_i}}{z_{b_j}} \right)}_{\text{only if }a_i > b_j} \right] 
$$
The unit of this multiplication is $1 \in \text{End}_{\BC(q,t)}(V^{\otimes 0})$. We will refer to $*$ as the shuffle product.

\begin{prop}[\texorpdfstring{\!\!\cite[Propositions 2.6, 4.6, 4.10]{Ng_tale}}{[Prop. 2.6, 4.6, 4.10]}]
\label{prop:preserve}
The subspaces $\CV_{\emph{sym}}$, $\CA^+_{\emph{big}}$ and $\CA^+$ are preserved by $*$.

\end{prop}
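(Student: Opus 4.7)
The plan is to verify the three inclusions separately, showing in turn that symmetry, the pole conditions, and the wheel conditions are each preserved by $*$.

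For symmetry, it suffices to check that $\check R_i(z_{i+1}/z_i)(A*B)(z) = (A*B)(z_{s_i})\check R_i(z_{i+1}/z_i)$ for every simple transposition $s_i \in \mathfrak S_{k+l}$, after which the general case of \eqref{eqn:symmetric tensor} follows by induction on the length of $\sigma$ using the recursion \eqref{eq:Rch-sigma} and the Yang-Baxter equation \eqref{eq:YB}. I would partition the shuffles indexing the sum \eqref{eqn:shuffle product} according to the location of $i$ and $i+1$ among the $a$-indices and $b$-indices. When both lie in the $a$-block, the symmetry of $A$ combined with Yang-Baxter lets us transport the factor $\check R_i$ past the various $R$-matrices and through $A$; similarly when both lie in the $b$-block, using the symmetry of $B$. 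In the mixed case, a shuffle with $i \in \{a\}, i+1 \in \{b\}$ is paired with the shuffle in which the two roles are swapped, and the conjugation by $\check R_i$ exchanges these two terms, leaving the total sum invariant.

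For the pole conditions, recall that the matrix $R(z)$ has a simple pole only at $z = 1$. The shuffle formula produces ``middle'' factors $R_{b_j a_i}(z_{b_j}/(z_{a_i}q))$ that contribute exactly the allowed poles $z_{a_i} = z_{b_j} q$, but also ``outer'' factors $R_{a_i b_j}(z_{a_i}/z_{b_j})$ that a priori introduce forbidden poles at $z_{a_i} = z_{b_j}$. I would show that these forbidden poles cancel in the summation over shuffles: for each pair $(a_i, b_j)$ with $a_i, b_j$ consecutive, pair the term in the sum with the one obtained by swapping which of $a_i, b_j$ lies in $\{a\}$ and which in $\{b\}$. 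The unitarity relation \eqref{eq:unitarity}, combined with Yang-Baxter to move the relevant $R$-matrices next to their partners, collapses the two residues into $\check R(x)\check R(1/x) = f(x)\cdot \id$, which is regular, so the forbidden poles cancel in pairs and only the simple poles at $z_i = z_j q$ survive.

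For the wheel conditions, fix a composition $\lambda = (\lambda_1,\ldots,\lambda_u)$ with $\lambda_1+\ldots+\lambda_u = k+l$ and compute the iterated residue of $A*B$ at the specialisation \eqref{eqn:residue variables}. Only those shuffles contribute for which the specialisation is consistent with the decomposition, i.e., for which each geometric progression $y_s, y_sq, \ldots, y_sq^{\lambda_s-1}$ distributes as a pair of sub-progressions into the $a$-block and the $b$-block. Grouping such contributions by the pair of refined compositions $(\lambda^A, \lambda^B)$ produced this way, each summand factors as the residue of $A$ on $\lambda^A$ times the residue of $B$ on $\lambda^B$, sandwiched by a product of $R$-matrices coming from \eqref{eqn:shuffle product}. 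Repeated application of the Yang-Baxter equation rewrites this accumulated product into the canonical shape of \eqref{eqn:residue} for the full composition $\lambda$, and the resulting reduced tensor $(A*B)^{\lambda}(y_1,\ldots, y_u)$ is itself the shuffle product of $A^{\lambda^A}$ and $B^{\lambda^B}$ in the smaller shuffle algebra on the variables $y_s$. The main obstacle I anticipate is precisely this bookkeeping step: tracking how the three layers of $R$-matrices (outer, inner, and those introduced by the residue factors in \eqref{eqn:residue}) recombine under Yang-Baxter and unitarity so as to match the prescribed product structure on $\lambda$. For the detailed combinatorial manipulations I would follow the proofs of Propositions~2.6, 4.6, and 4.10 in \cite{Ng_tale}.
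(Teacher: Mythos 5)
The paper contains no argument of its own for this statement---Proposition \ref{prop:preserve} is quoted verbatim from \cite{Ng_tale} (Propositions 2.6, 4.6 and 4.10)---so the only comparison available is with the proofs in that reference, to which your sketch also ultimately defers. Your outline of the three steps is consistent with how those proofs go: symmetry reduces to simple transpositions and is most cleanly seen from the rewriting \eqref{eq:A*B-Rcheck} of $A*B$ as a sum of conjugates of $\Gamma^+_{A,B}$ by the matrices $\check R_\sigma$; the pole conditions follow from cancellation, between paired shuffles, of the spurious simple poles at $z_{a_i}=z_{b_j}$ produced by the outer factors of \eqref{eqn:shuffle product}; and for the wheel conditions the key point is exactly the one you identify, namely that a nonvanishing iterated residue forces each geometric string in \eqref{eqn:residue variables} to split as a prefix in the $a$-block followed by a suffix in the $b$-block (a $b$-variable immediately followed by an $a$-variable in the same string contributes no pole). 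One step of your pole-condition argument is misstated, however: $f(x)=(1-tx)(1-t^{-1}x)/(1-x)^2$ has a \emph{double pole} at $x=1$, so the assertion that unitarity ``collapses the two residues into $\check R(x)\check R(1/x)=f(x)\cdot \text{id}$, which is regular'' cannot be the mechanism. What actually happens is that the residues at $z_{a_i}=z_{b_j}$ of the two paired terms cancel: by \eqref{eq:R residue} the residue of $R(x/y)$ at $x=y$ is $(t^{1/2}-t^{-1/2})P$, and pushing this permutation through $A$ and $B$ using \eqref{eqn:symmetric tensor} identifies the residue of one term with the negative of the residue of its partner. Finally, be aware that the genuinely laborious part---verifying that the $R$-matrices accumulated in the wheel-condition residue reassemble, via Yang--Baxter and unitarity, into the exact normal form \eqref{eqn:residue}---is precisely what you have deferred to \cite{Ng_tale}, so as written your argument is an outline rather than a self-contained proof.
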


With the Proposition above in mind, we will call $\CA^+$ the shuffle algebra, and it will be the main object of our study. 

The shuffle product $*$ can be written in terms of the matrix $\check{R}$:
\begin{align}
    \label{eq:A*B-Rcheck}
    A*B=\sum_{\sigma\in \mathfrak S_{k+l}/\mathfrak S_l\times \mathfrak S_k}
    \frac{1}{f_\sigma(z)}\check R_\sigma^{-1}(z)
    \Gamma^+_{A,B}(z_\sigma) \check R_\sigma(z)
\end{align}
where $f_\sigma(z)$ is a normalizing factor:
\begin{align}\label{eq:f-sigma}
    f_\sigma(z):= \prod_{\substack{1\leq i< j\leq k+l\\ \sigma(i)>\sigma(j)}} f\left(\frac{z_{\sigma(j)}}{z_{\sigma(i)}}\right)
\end{align}
The tensor $\Gamma^+_{A,B}(z_1\ldots z_{k+l})$ is given by:
\begin{multline}
        \label{eq:Gamma_plus}
    \Gamma^+_{A,B}(z_1\ldots z_{k+l}) :=
    A_{l+1\ldots k+l}(z_{l+1}\ldots z_{k+l})
    \check R_{\omega^{l}}(q z_{l+1}\ldots q z_{l+k},z_{1}\ldots z_{l})\\
    B_{k+1\ldots k+l}(z_1\ldots z_{l}) 
    \check R_{\omega^k}(z_1\ldots z_{k+l})
\end{multline}
where $\omega^{j}\in \mathfrak S_{k+l}$ is the rotation by $j$ units: $\omega^j(12\ldots k+l)=(k+l-j+1\ldots k+l,1\ldots k+l-j)$.  
The version of the shuffle product in \eqref{eq:A*B-Rcheck} contains pairs of $\check R$-matrices which can be removed with the help of the unitarity relation \eqref{eq:unitarity}. When one such pair is removed we need to multiply the expression by a factor $f(z_i/z_j)$ for an appropriate pair of indices $i,j$. When all such pairs of $\check R$-matrices are removed the resulting product of factors $f(z_i/z_j)$ will cancel against $f_\sigma(z)$. 

\subsection{The double shuffle algebra}
\label{sub:double}

One of the deeper results concerning the shuffle algebra $\CA^+$ (which is one of the reasons why the wheel conditions in Definition \ref{def:wheel} are natural to impose) is the following.

\begin{prop}
\label{prop:gen}

(\!\!\cite[Corollary 5.30]{Ng_tale}) As a $\BC(q,t)$ algebra, $\CA^+$ is generated by the elements
\begin{equation}
\label{eqn:generators}
E_{ij} = E_{\bar{i}\bar{j}} z_1^{\left \lfloor\frac {i-1}n \right \rfloor - \left \lfloor\frac {j-1}n \right \rfloor} \in \emph{End}(V)[z_1^{\pm 1}]
\end{equation}
as $(i,j)$ runs over $\BZ^2/(n,n)\BZ$, where $\bar{i}$ denotes the element of $\{1\ldots n\}$ congruent to $i$ modulo $n$. 

\end{prop}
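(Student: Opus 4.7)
Plan: The strategy is induction on the number of variables $k$, showing that every element of $\CA^+ \cap \text{End}(V^{\otimes k})(z_1\ldots z_k)$ lies in the shuffle subalgebra $\langle E_{ij}\rangle$ generated under $*$ by the single-variable tensors of \eqref{eqn:generators}.

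The base case $k=1$ is essentially trivial: on a single variable the symmetry condition \eqref{eqn:symmetric tensor} and the wheel conditions of Definition \ref{def:wheel} are vacuous, and the pole condition \eqref{eqn:pole condition} reduces to demanding Laurent polynomial dependence on $z_1$. Hence $\CA^+ \cap \text{End}(V)(z_1) = \text{End}(V)[z_1^{\pm 1}]$, and the $E_{ij}$ as $(i,j)$ ranges over $\BZ^2/(n,n)\BZ$ are readily seen to form a $\BC(q,t)$-basis of this space (upon normalizing $i\in\{1,\ldots,n\}$, the remaining data $j\in\BZ$ accounts for the degree in $z_1^{\pm 1}$).

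For the inductive step, I would use the iterated residues at the wheel loci \eqref{eqn:residue variables} as a controlled peeling device. Given $X \in \CA^+$ with $k$ variables, each composition $\lambda = (\lambda_1\ldots\lambda_u)$ of $k$ with $u<k$ produces via Definition \ref{def:wheel} a tensor $X^{(\lambda)} \in \text{End}(V^{\otimes u})(y_1\ldots y_u)$, which, as noted after the definition, again satisfies the pole and symmetry conditions and hence lies in $\CA^+_u$; by the inductive hypothesis, $X^{(\lambda)}$ is a shuffle polynomial in the $E_{ij}$. The task is then to exhibit an explicit $Y = \sum c_{\vec{i},\vec{j}}\, E_{i_1j_1} * \cdots * E_{i_kj_k} \in \langle E_{ij}\rangle$ whose iterated wheel residues match the family $\{X^{(\lambda)}\}_\lambda$. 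Direct inspection of \eqref{eqn:shuffle product} shows that the leading wheel residue of a $k$-fold shuffle of single-variable generators at the locus $z_s = y q^{s-1}$ is computable and depends linearly on the choice of $(i_s,j_s)$, so one can match residues order by order over the partial ordering on compositions. Subtracting $Y$ yields an element $X-Y$ without wheel-type poles, which by \eqref{eqn:pole condition} is a Laurent polynomial and hence reachable as well by $k$-fold shuffle products of $E_{ij}$'s evaluated away from the wheel locus. A Hilbert series comparison---matching the graded dimensions of $\CA^+$ derived from the wheel conditions with those of a PBW-type basis of $\langle E_{ij}\rangle$---then closes the induction.

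The main obstacle is the matrix-valued nature of the algebra: unlike the scalar case (e.g.\ \cite{FO,E}), each wheel residue in \eqref{eqn:residue} carries a long string of $R$-matrices that conjugate $X^{(\lambda)}$ on $V^{\otimes k}$, and these $R$-matrices interact nontrivially with the $R$-matrix factors arising in the shuffle formula \eqref{eq:A*B-Rcheck} applied to the generators $E_{ij}$. Verifying that the shuffle product of generators reproduces not only the scalar residue values but also the correct $\text{End}(V^{\otimes k})$-action at each wheel locus is the technical heart of the argument: the symmetry condition \eqref{eqn:symmetric tensor} and the Yang--Baxter equation \eqref{eq:YB} are essential for propagating the conjugations, and a carefully chosen triangular basis of $\langle E_{ij}\rangle$ --- indexed, for instance, by convex-path data on the $(n,n)$-periodic lattice --- is needed for the bijection between residue data and shuffle products to be unambiguous.
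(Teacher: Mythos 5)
The paper does not prove this Proposition at all: it is quoted verbatim from \cite[Corollary 5.30]{Ng_tale}, where the proof goes through the decomposition of $\CA^+$ into slope subalgebras and the comparison with the quantum toroidal algebra $\UU$ (the paper even flags it as ``one of the deeper results'' about $\CA^+$). So there is no in-paper argument to compare against, and your proposal must stand on its own.

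It does not, as written. The base case $k=1$ is fine, but the inductive step has a genuine gap exactly where the difficulty lives. After constructing a shuffle polynomial $Y$ in the generators whose wheel residues match those of $X$, you assert that $X-Y$ ``without wheel-type poles \ldots is a Laurent polynomial and hence reachable as well by $k$-fold shuffle products of $E_{ij}$'s.'' That last clause is precisely the statement being proved, restricted to the subspace of pole-free elements, and nothing in the preceding argument addresses it: a symmetric $\emph{End}(V^{\otimes k})$-valued Laurent polynomial satisfies the wheel conditions vacuously, so the residue-matching machinery gives no leverage on it, and there is no a priori reason such an element lies in $\langle E_{ij}\rangle$. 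The ``Hilbert series comparison'' you invoke to close the induction is the actual content of the theorem --- one needs an upper bound on the graded dimensions of $\CA^+$ coming from the pole and wheel conditions that matches a lower bound on the image of the generators, and in \cite{Ng_tale} producing this bound requires the slope filtration and an explicit PBW-type basis, not a routine count. A secondary issue: a generic $k$-fold shuffle product of single-variable generators has \emph{no} pole along the locus $z_s = yq^{s-1}$ (the poles in \eqref{eqn:shuffle product} come from the $R$-matrix factors $R_{b_ja_i}(z_{b_j}/(z_{a_i}q))$ and only appear for particular interleavings), so the claim that the leading wheel residue ``depends linearly on $(i_s,j_s)$'' and can be matched ``order by order'' needs to be demonstrated, not asserted; the triangularity of the residue map on your proposed convex-path basis is itself a nontrivial statement.
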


\begin{defn}
\label{def:negative shuffle}

Let $\CA^-$ denote the algebra defined just like $\CA^+$, but with the parameter $q^{-1}t^n$ instead of $q$ in Definition \ref{def:wheel} and \eqref{eqn:pole condition}, and with
$$
D_{b_j} R_{b_ja_i}  \left( \frac {z_{b_j}q}{z_{a_i} t^n} \right) D_{b_j}^{-1} \quad \text{instead of} \quad R_{b_ja_i} \left( \frac {z_{b_j}}{z_{a_i} q} \right)
$$
in \eqref{eqn:shuffle product}, where $D$ was defined in \eqref{eq:D}.

\end{defn}
We will reuse the symbol $*$ for the shuffle product of $\CA^-$. For two tensors $A = A_{1\ldots k}(z_1\ldots z_k)\in \CA^-$ and $B = B_{1\ldots l}(z_1\ldots z_l)\in \CA^-$ we have the following analogue of \eqref{eq:A*B-Rcheck}
\begin{align}
    \label{eq:A*B-Rcheck-dual}
    A*B=\sum_{\sigma\in \mathfrak S_{k+l}/\mathfrak S_l\times \mathfrak S_k}
    \frac{1}{f_\sigma(z)}
    \check R_\sigma^{-1}
    \Gamma^-_{A,B}(z_\sigma) \check R_\sigma
\end{align}
where:
\begin{multline}
        \label{eq:Gamma_minus}
    \Gamma^-_{A,B}(z_1\ldots z_{k+l}) :=
    A_{l+1\ldots k+l}(z_{l+1}\ldots z_{k+l})
    \check R^\bullet_{\omega^{l}}(z_{l+1}\ldots  z_{l+k},q z_{1}\ldots q z_{l})\\
    B_{k+1\ldots k+l}(z_1\ldots z_{l}) 
    \check R_{\omega^k}(z_1\ldots z_{k+l})
\end{multline}
It is easy to see that we have an isomorphism $\CA^+ \xrightarrow{\sim} \CA^-$ given by the assignment
$$
X_{1\dots k}(z_1\ldots z_k) \mapsto D_1 \dots D_k X_{1\dots k}(z_1\ldots z_k)  \Big|_{q \mapsto q^{-1}t^n}
$$
In what follows, we will encounter the quantum group of type $\widehat{A}_{n-1}$
\begin{equation}
\label{eqn:sl}
\su = \BC(t) \Big \langle x_i^+, x_i^-, \psi_i^{\pm 1},c^{\pm 1} \Big \rangle_{i \in \{1\ldots n\}} \Big/ \Big( \text{\text{relations (3.4)-(3.7) of \cite{Ng_tale}}} \Big)
\end{equation}
(note that its Cartan subalgebra has one more dimension than usual for the quantum group, but this is a trivial modification) and the quantum Heisenberg algebra
\begin{equation}
\label{eqn:gl 1}
\uui = \BC(t) \Big \langle p_{\pm k}, c^{\pm 1} \Big \rangle_{k \in \BN} \Big/ 
 \Big( [p_k,p_{-k}] = k \frac {c^k - c^{-k}}{t^k - t^{-k}} \text{ and all other commutators 0} \Big)
\end{equation}
We define the quantum affine group associated to $\fgl_n$ by
\begin{equation}
\label{eqn:gl}
\uu = \su \otimes \uui
\end{equation}

\begin{defn}
\label{def:double shuffle}

(see \cite[Section 6]{Ng_tale} for details) The vector space
\begin{equation}
\label{eqn:double a}
\CA = \CA^+ \otimes \uu \otimes \CA^-
\end{equation}
is made into an algebra by specifying how the three factors commute with each other.

\end{defn}

\subsection{The quantum toroidal algebra}
\label{sub:classic shuffle}

We will consider the quantum toroidal algebra
$$
\UU = \BC(q,t) \Big \langle x_{i,k}^+, x_{i,k}^-, \varphi_{i,k'}^+,  \varphi_{i,k'}^-, \psi_i^{\pm 1}, c^{\pm 1}, \bar{c}^{\pm 1} \Big \rangle_{i \in \{1\ldots n\}}^{k \in \BZ, k' >0 } \Big/ \Big( \text{relations (3.41)--(3.47) of \cite{Ng_tale}} \Big)
$$

\begin{thm}
\label{thm:iso}

(\!\!\cite[Theorem 1.5]{Ng_tale}) There exists an isomorphism 
\begin{equation}
\label{eqn:Phi}
\Phi : \UU \xrightarrow{\sim} \CA
\end{equation}
completely determined by sending 
$$
\varphi_{i,1}^\pm \in \UU \qquad \text{to} \qquad E_{ii} \in \CA^\pm \subset \CA
$$
and 
$$
x_{i,0}^\pm,\psi_i \in \UU \quad \text{to} \quad  x_i^\pm,\psi_i \in \uu \subset \CA
$$
for all $i \in \{1\ldots n\}$.

\end{thm}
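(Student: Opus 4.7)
The plan is to construct $\Phi$ as an algebra homomorphism with the prescribed values on a small set of generators, then extend to all of $\UU$, verify the defining relations, and finally establish the bijection by proving surjectivity (via Proposition \ref{prop:gen}) and injectivity separately. The triangular decomposition $\UU = \UU^+ \otimes \UU^0 \otimes \UU^-$ and the factorization $\CA = \CA^+ \otimes \uu \otimes \CA^-$ suggest building $\Phi$ as a tensor product of three maps, with $\Phi|_{\UU^0} = \id_{\uu}$ tautologically and $\Phi|_{\UU^\pm} : \UU^\pm \to \CA^\pm$ the substantive maps to verify.

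First I would extend $\Phi$ to all the Drinfeld generators. Starting from $\Phi(\varphi_{i,1}^+) = E_{ii}$ and $\Phi(x_{i,0}^+) = x_i^+$, the remaining $\Phi(x_{i,k}^+)$ for $k \in \BZ$ are forced by the cross-relations between the horizontal subalgebra $\uu$ and $\CA^+$ inside $\CA$: namely $x_{i,k}^+ = [\text{Heisenberg mode}, x_{i,0}^+]$ up to normalization, which under $\Phi$ becomes an explicit element of $\CA^+$, concretely the spectral-parameter-shifted $E_{ii} z_1^{k}$ (which by Proposition \ref{prop:gen} is precisely $E_{\bar{i} \bar{i}} z_1^{k}$ up to wrapping). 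Similarly, the higher $\varphi_{i,k}^+$ for $k > 1$ are defined either via iterated shuffle products of $E_{ii}$ or as images of the appropriate Heisenberg modes under a Hopf-pairing, matching the ``slope $0$'' commutative subalgebra structure. The same construction applied with $q \to q^{-1}t^n$ gives $\Phi|_{\UU^-}$ landing in $\CA^-$.

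Next I would verify that $\Phi$ preserves all relations (3.41)--(3.47) of \cite{Ng_tale}. The Heisenberg/Cartan relations and their interactions with $\psi_i$ hold by construction since $\Phi|_{\UU^0} = \id$. The essential verifications are the quadratic and cubic Serre-type relations among the $x_{i,k}^\pm$, which translate into identities for shuffle products of the generators $E_{ij}$. These identities are forced by the pole condition \eqref{eqn:pole condition} together with the wheel condition of Definition \ref{def:wheel}: the cubic Serre relation, specialized to a three-fold string of spectral parameters $y, qy, q^2 y$, becomes precisely the residue formula computed in Example \ref{ex:wheel u=1}, and the vanishing of the double commutator $[x_{i,k}^+, [x_{i,k'}^+, x_{j,l}^+]] + (k \leftrightarrow k')$ translates into the statement that the corresponding shuffle element has a residue of the prescribed factorized form rather than a genuine pole. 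Surjectivity then follows immediately from Proposition \ref{prop:gen}: the generators $E_{ij}$ of $\CA^+$ are precisely $\Phi(x_{i,k}^+)$ (with the shift exponent $\lfloor (i-1)/n \rfloor - \lfloor (j-1)/n \rfloor$ matching the Laurent exponent in $z_1$), and the $\uu$-part is in the image tautologically.

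The main obstacle is injectivity. Here I would compare graded dimensions on both sides with respect to the natural $\nn \times \BZ$-grading (horizontal degree and spectral/loop degree). On the toroidal side, a PBW-type basis of $\UU^+$ is known from the convex-order factorization \`a la Beck--Damiani--Negu{\c t}, yielding a character equal to a product of root contributions over the positive real and imaginary roots of the affinization. On the shuffle side, one must produce a matching \emph{upper} bound for the character of $\CA^+$; the standard route is to construct a faithful representation of $\CA^+$ (for instance the geometric representation on equivariant $K$-theory of Laumon moduli spaces, as in Schiffmann--Vasserot and Negu{\c t}) and to read off linear independence of shuffle monomials from their action on a highest-weight vector. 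This dimension-count step — establishing that no element of $\UU^+$ can vanish under $\Phi$ beyond what is forced by the relations — is the technical heart of the argument, and the wheel conditions are essential precisely because without them the dimension bound would be too weak.
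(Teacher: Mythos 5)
First, note that the paper does not actually prove Theorem \ref{thm:iso}: it is quoted verbatim as \cite[Theorem 1.5]{Ng_tale}, and the surrounding text only records that the proof in \cite{Ng_tor_sh, Ng_pbw, Ng_tale} proceeds by factoring both $\UU$ and $\CA$ into slope subalgebras and by exploiting the classic double shuffle realization $\Upsilon : \UU \xrightarrow{\sim} \CS$. Your outline is therefore a genuinely different route from the one the paper points to, and as written it has two substantive problems.

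The first is the role you assign to the wheel conditions. The verification that the images of the $x_{i,k}^\pm$ satisfy the Serre-type relations takes place already in the big shuffle algebra $\CA^+_{\text{big}}$ and is an $R$-matrix computation that does not involve Definition \ref{def:wheel}; the wheel conditions do not ``force'' the relations. Their purpose is the opposite one: they cut $\CA^+_{\text{big}}$ down to a subspace small enough that the subalgebra generated by the $E_{ij}$ fills it up, i.e.\ they are the mechanism behind Proposition \ref{prop:gen} (surjectivity), not behind well-definedness of $\Phi$. Your identification of the cubic Serre relation with the residue computation of Example \ref{ex:wheel u=1} also conflates the $t$-deformation direction (where the Serre relations live) with the $q$-direction strings $z, qz, q^2z, \dots$ (where the wheel conditions live). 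The second problem is that the two genuinely hard steps are left as black boxes: surjectivity is outsourced to Proposition \ref{prop:gen}, which in \cite{Ng_tale} is itself established by the same slope-subalgebra machinery that proves the theorem, and injectivity is outsourced to a faithful geometric representation on $K$-theory of Laumon spaces plus a PBW character count --- a strategy that exists in the literature for $\CS$ but is not what is done here, and which would require an independent computation of the character of the matrix shuffle algebra $\CA^+$, essentially the content one is trying to establish. The argument in the cited references avoids both issues by decomposing $\UU$ and $\CA$ into slope subalgebras, identifying each slope piece explicitly with a quantum affine algebra, and gluing; this produces compatible PBW bases on both sides simultaneously and yields injectivity and surjectivity in one stroke.
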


The isomorphism $\Phi$ is very non-trivial (it uses an explicit description of both $\UU$ and $\CA$ in terms of slope subalgebras determined in \cite{Ng_pbw}) and this has a lot of potential, in the following sense: formulas that are quite easy on one side of the isomorphism are quite difficult on the other side, and vice versa. An example of this is given by Proposition \ref{prop:comm} in Section \ref{sec:Commutative}. Theorem \ref{thm:iso} was proved in \cite{Ng_tor_sh, Ng_pbw,Ng_tale} by an in-depth study of the algebras involved, in which an important part was played by the classic double shuffle algebra realization of $\UU$, which we now recall.

\begin{defn}
\label{def:classic shuffle}

(\!\!\cite{E}, inspired by \cite{FO}) Consider the vector space
\begin{equation}
\label{eqn:classic shuffle}
\CS^+ \subset \bigoplus_{\bd = (d_i \geq 0)_{i \in \{1,\dots,n\}}} \BC(q,t) (x_{i1},\dots,x_{id_i})_{i \in \{1,\dots,n\}}^{\emph{sym}} 
\end{equation}
consisting of rational functions which are

\begin{itemize}[leftmargin=*]

\item symmetric in $x_{i1}, x_{i2},\dots$ for each $i \in \{1,\dots,n\}$ separately

\item have at most poles of the form $x_{ia}t^{-1/2}-x_{i+1,b}t^{1/2}$ for any $i,a,b$, where we write $x_{n+1,b} =q^{-1} x_{1b}$.

\item their residue the pole above is divisible by $x_{ia}-x_{i+1,b'}$ and by $x_{i+1,b}-x_{ia'}$ for any $a \neq a'$ and $b \neq b'$

\end{itemize}

One makes $\CS^+$ into an algebra with respect to the shuffle multiplication
\begin{multline} 
R * R' = \emph{Sym} \left[R(x_{i1},\dots,x_{id_i})_{i \in \{1,\dots,n\}} R'(x_{i,d_i+1},\dots,x_{i,d_i+d_i'})_{i \in \{1,\dots,n\}} \right. \\ \left. \prod_{i,j \in \{1,\dots,n\}} \prod_{a \leq d_i, b > d_j} \begin{cases} \frac {x_{ia} t^{-1/2} - x_{jb}t^{1/2}}{x_{ia}-x_{jb}} & \text{if } i \equiv j \text{ mod }n \\ \frac {x_{ia}-x_{jb}q^{-\delta_{j1}}}{x_{ia} t^{-1/2} - x_{jb}t^{1/2}q^{-\delta_{j1}}} &\text{if } i+1 \equiv j \text{ mod }n \\ 1 & \text{otherwise} \end{cases}\right]
\end{multline}

\end{defn}

One lets $\CS^- = \CS^{+,\text{op}}$ and defines the double shuffle algebra as
$$
\CS = \CS^+ \otimes \uui^{\otimes n} \otimes \CS^-
$$
by appropriately specifying how the three tensor factors should commute with each other (see \cite{Ng_tor_sh} for details). Then the main result of \cite{Ng_tor_sh} is an isomorphism
\begin{equation}
\label{eqn:Upsilon}
\Upsilon: \UU \xrightarrow{\sim} \CS
\end{equation}
determined by sending $x_{i,k}^\pm$ to the Laurent polynomials $x_{i1}^k \in \CS^\pm$, for all $i \in \{1,\dots,n\}$ and $k \in \BZ$. Composing the isomorphisms \eqref{eqn:Phi} and \eqref{eqn:Upsilon} gives us the isomorphism $\CA \cong \CS$ between the two kinds of double shuffle algebras featured in the present paper.

\subsection{The super case}
\label{sub:super}

There exists an analogue of the discussion above for the Lie superalgebra $\fgl_{n|m}$ instead of $\fgl_n$, although much of it is conjectural (we will outline the complete picture and clearly state the missing parts, but stop short of implementing them). One considers $V = \BC^{n+m}$ and replaces the $R$-matrix \eqref{eq:R-matrix} by
\begin{align}\label{eq:R-matrix super}
  R(z) := \sum_{i,j=1}^{n+m}
  \left(\epsilon_i\frac{t^{-\epsilon_i /2}-z t^{\epsilon_i /2}}{1-z}\right)^{\delta_{i=j}}E_{ii}\otimes E_{jj}  
  +
  \sum_{\substack{i,j=1\\ i\neq j}}^{n+m}
  \frac{(t^{-1/2}-t^{1/2})z^{\delta_{i<j}}}{1-z}E_{ij}\otimes E_{ji}  
\end{align}
where:
\begin{align}\label{eq:gradation}
\epsilon_i = (-1)^{\delta_{i> n}}
\end{align}
is the gradation of the standard basis vectors $\ket{i}\in V$, $i=1\ldots n+m$\footnote{We choose the gradation of vectors to be $\{\overset{n}{\overbrace{1\ldots 1}},\overset{m}{\overbrace{-1\ldots -1}}\}$ for simplicity of presentation. More generally, one may take instead any permutation of $\{1\ldots 1,-1\ldots -1\}$.}.
The supersymmetric version of the matrix $\check R$ is given by:
\begin{align}\label{eq:R-check  super}
  \check R(z) := R(z)P
\end{align}
The $R$-matrix \eqref{eq:R-matrix super} has the same fundamental properties as \eqref{eq:R-matrix}: it satisfies the Yang-Baxter equation with spectral parameter \eqref{eq:YB}, unitarity \eqref{eq:unitarity} and crossing unitarity \eqref{eq:crossing_unitarity} in which the matrix $\check R^{\bullet}$ reads:
\begin{align}
    \label{eq:R-bullet-super}
    \check{R}^\bullet(z):=(D\otimes \id) \check{R} (z t^{m-n}) (\id\otimes D^{-1})
\end{align}
where:
\begin{align}
    \label{eq:D-super}
    D := \normalfont{\text{diag}}\{1,t^{-1}\ldots t^{-n+1},(-1)t^{-n+1},(-t)t^{-n+1}\ldots (-t^{m-1})t^{-n+1}\}
\end{align}
The $R$-matrix \eqref{eq:R-matrix super} has a simple pole at $z = 1$ with the same residue as before. This allows the discussion in Subsections \ref{sub:symmetric tensors}, \ref{sub:shuffle product} and \ref{sub:double} to run through, and determines shuffle algebras $\CA^+$ and $\CA^-$ that one can combine into a Drinfeld double
\begin{equation}
\label{eq:double super}
\CA = \CA^+ \otimes U_t(\dot{\fgl}_{n|m}) \otimes \CA^-
\end{equation}

\begin{conj}
\label{conj:super}

There is an isomorphism 
\begin{equation}
\label{eqn:conj super}
\CA \cong U_{q,t}(\ddot{\fgl}_{n|m})
\end{equation}
with the quantum toroidal superalgebra in the right-hand side defined as in \cite{BM}.

\end{conj}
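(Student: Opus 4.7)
The plan is to mirror the proof of Theorem \ref{thm:iso} from \cite{Ng_tor_sh, Ng_pbw, Ng_tale} while carefully adapting each ingredient to the $\BZ_2$-graded framework set up in Subsection \ref{sub:super}. First I would construct an explicit homomorphism $\Phi : U_{q,t}(\ddot{\fgl}_{n|m}) \to \CA$ by sending $\varphi_{i,1}^{\pm}$ to $E_{ii} \in \CA^{\pm}$ and $x_{i,0}^{\pm}, \psi_i$ to their natural counterparts in $\suu \subset \CA$. The verification that $\Phi$ respects the toroidal relations from \cite{BM} reduces, as in the non-super case, to local computations involving the $R$-matrix \eqref{eq:R-matrix super}; because this $R$-matrix satisfies the same Yang--Baxter, unitarity and crossing-unitarity identities used in \cite{Ng_tale}, the bulk of these checks should go through once one carefully tracks the signs dictated by the grading \eqref{eq:gradation}.

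Next, to show surjectivity of $\Phi$, I would prove the super analogue of Proposition \ref{prop:gen}: that the elements $E_{ij}$ of \eqref{eqn:generators} generate $\CA^+$. The strategy of \cite{Ng_tale} is to decompose $\CA^+$ into slope subalgebras and then generate each such subalgebra from its degree-one component. One must verify that the slope filtration persists in the super setting; the technical core is to confirm that iterated shuffles of the $E_{ij}$ exhaust, after specialization at the wheel loci \eqref{eqn:residue variables}, the entire space of symmetric tensors satisfying the super wheel conditions, which in turn requires unpacking how the sign arising when $\epsilon_i = -1$ interacts with the residue formula \eqref{eqn:residue}. Injectivity would then follow, as in the $m=0$ case, from a comparison of graded dimensions with a classic trigonometric shuffle realization: one constructs the analogue $\CS_{n|m}$ of Definition \ref{def:classic shuffle}, incorporating fermionic alphabets at the odd nodes, and proves an isomorphism $U_{q,t}(\ddot{\fgl}_{n|m}) \cong \CS_{n|m}$ in the spirit of \eqref{eqn:Upsilon}.

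I expect the main obstacle to lie in this last step. Constructing a super classic shuffle algebra and proving its isomorphism with $U_{q,t}(\ddot{\fgl}_{n|m})$ is delicate because the higher-order fermionic Serre relations at the odd nodes cannot be read off from the quadratic and cubic Serre relations of the non-super case, and the PBW-type slope decomposition of \cite{Ng_pbw} ultimately relied on torus-equivariant localization on Nakajima quiver varieties, which has no immediate geometric counterpart for $\fgl_{n|m}$. A purely algebraic slope-by-slope induction---plausibly leveraging the shuffle exponential formulas of Theorem \ref{thm:Z_intro} developed in the present paper to supply PBW-style generators inside each intermediate slope subalgebra---would have to be carried out in place of the geometric argument, and this is where I would concentrate the hardest work.
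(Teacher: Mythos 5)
The statement you are addressing is stated in the paper as a \emph{conjecture}, not a theorem: the authors prove it only in the case $m=0$ (by citation to \cite{Ng_tale}) and for general $n|m$ they offer only a five-step roadmap (factorize both $U_{q,t}(\ddot{\fgl}_{n|m})$ and $\CA$ into slope subalgebras, work out the commutation relations between nearby slope subalgebras on each side, and compare), explicitly flagging that carrying this out ``would be a very challenging task.'' Your outline is broadly the same roadmap --- mirror the proof of Theorem \ref{thm:iso}, establish a super analogue of Proposition \ref{prop:gen} for surjectivity, and compare with a super classic shuffle algebra $\CS_{n|m}$ for injectivity --- and you correctly locate the hard part in the slope/PBW decomposition and the higher-order fermionic Serre relations. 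But what you have written is a plan, not a proof: every step you defer (``should go through once one carefully tracks the signs,'' ``one must verify that the slope filtration persists'') is precisely a step the paper identifies as open, so the proposal does not close the gap between conjecture and theorem.

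Two concrete issues beyond incompleteness. First, Remark \ref{rmk:conj super} warns that the isomorphism may \emph{fail as stated}: one may need to impose additional higher-order relations on $U_{q,t}(\ddot{\fgl}_{n|m})$ beyond those of \cite{BM} (as was done in \cite{Ng_reduced} for generalized conifolds) before any map $\Phi$ can be well defined and injective. Your first step --- ``verify that $\Phi$ respects the toroidal relations from \cite{BM}'' --- presumes the presentation of \cite{BM} is already the right one, whereas the real difficulty is determining whether the kernel of the would-be surjection onto $\CA$ is generated by those relations at all. Second, your suggestion to leverage ``the shuffle exponential formulas of Theorem \ref{thm:Z_intro}'' to supply PBW-style generators is circular: the paper states explicitly that Theorem \ref{thm:Z_intro} (via Proposition \ref{prop:comm}) is itself contingent on Conjecture \ref{conj:super} in the super case. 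Only the slope-$0$ subalgebra structure of Subsection \ref{sub:commutative super} (the elements $H_k^{(i)}$ and the isomorphism $\Xi$) is available unconditionally, and that covers a single slope, not the full filtration you would need.
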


\begin{rmk}
\label{rmk:conj super}

It may be the case that for \eqref{eqn:conj super} to hold, one needs to impose upon its right-hand side additional higher order relations, on top of the ones considered in \cite{BM}. In \cite{Ng_reduced} (in the particular case of the generalized conifold, which is a singular toric Calabi-Yau threefold) the second-named author introduced such higher order relations, with the express goal of making the quantum toroidal superalgebra isomorphic to the corresponding double shuffle algebra $\CS$. 

\end{rmk}


We propose the following approach for the proof of Conjecture \ref{conj:super}, though we point out that working out the details would be a very challenging task (as would be to develop analogues of \eqref{eqn:conj super} in types other than $A$, super or not).

\begin{itemize}

\item Develop the factorization of $U_{q,t}(\ddot{\fgl}_{n|m})$ into slope subalgebras (which we only know how to do using the $\CS$ interpretation of quantum toroidal algebras, see \cite{Ng_tor_sh} for the non-super case).

\item Work out the commutation relations inside $U_{q,t}(\ddot{\fgl}_{n|m})$ of elements from ``nearby'' slope subalgebras (by analogy with the case of quantum toroidal $\fgl_1$ that was worked out in \cite{BS, Schiffmann} and of quantum toroidal $\fgl_n$ that was worked out in \cite{Ng_pbw}).

\item Develop the factorization of $\CA$ into slope subalgebras, following \cite{Ng_tale}.

\item Work out the commutation relations inside $\CA$ of elements from ``nearby'' slope subalgebras.

\item Compare the explicit descriptions in bullets 2 and 4, which may require some combinatorial reindexing of generators and non-trivial checking that relations match up.
 
\end{itemize}

In the remainder of Section \ref{sec:shuffle} and in \ref{sec:Psi}, the objects $R(z)$, $\CA^+$ etc will refer to the notions defined with respect to the super Lie algebra $\fgl_{n|m}$, and our main Theorem \ref{thm:Z_intro} holds conditional on Conjecture \ref{conj:super}. This Conjecture was proved in \cite{Ng_tale} in the non-super case $m=0$, in which case all results contained in the present paper are completely established.

\subsection{Diagrammatic representation}
In our computations it is helpful to use a diagrammatic representation of various tensors.  In this representation the vector space $V_z$ is given by a line with the parameter $z$ attached to it. The matrix $\check R$ is represented by a crossing of two lines:
\begin{align}
    \label{eq:R-Boltzmann}
    \check R(z_{i+1}/z_i) = 
\begin{tikzpicture}[scale=0.6,baseline=-2pt]
\draw[invarrow=0.75] (-1,0) --(1,0);
\draw[invarrow=0.75,] (0,-1) --(0,1);
\node[right] at (1,0) {$\scriptstyle z_{i+1}$};
\node[above] at (0,1) {$\scriptstyle z_i$};
\end{tikzpicture}=
\sum_{a,b,c,d=1\ldots n+m}
\left[
\begin{tikzpicture}[scale=0.6,baseline=-2pt]
\draw[invarrow=0.75] (-1,0) --(1,0);
\draw[invarrow=0.75] (0,-1) --(0,1);
\node[right] at (1+0.5,0) {$\scriptstyle z_{i+1}$};
\node[above] at (0,1+0.5) {$\scriptstyle z_{i}$};
\node[below] at (0,-1) {$\scriptstyle c$};
\node[left] at (-1,0) {$\scriptstyle a$};
\node[right] at (1,0) {$\scriptstyle d$};
\node[above] at (0,1) {$\scriptstyle b$};
\end{tikzpicture}
\right]
 \ket{a,c}\bra{b,d}
\end{align}
where the rightmost expression provides an explicit correspondence between the matrix elements of $\check R$ and the vertices with specified boundary conditions. Thus, using \eqref{eq:R-Boltzmann} and \eqref{eq:R-check  super} one can compute the Boltzmann weights in \eqref{tikz:coloredvertices_intro} in the introduction. When we consider the action of $\check R$ on tensor products of multiple spaces $V$ we will rotate the crossing in \eqref{eq:R-Boltzmann} by 45$\degree$ counterclockwise and add straight lines to represent the action by the identity:
\begin{align}
\label{eq:R_i-graph}
\check R_i(z_{i+1}/z_i)=\quad
\begin{tikzpicture}[baseline=0.5cm]
\draw[invarrow] (0,0) -- (0,1);
\node at (1,0.5) {$\cdots$};
\draw[invarrow] (2,0) -- (2,1);
\draw[invarrow=0.75] (3,0) -- node[pos=0.75,right] {$\scriptstyle z_{i+1}$} (4,1);
\draw[invarrow=0.75] (4,0) -- node[pos=0.75,left] {$\scriptstyle z_i$} (3,1);
\draw[invarrow] (5,0) -- (5,1);
\node at (6,0.5) {$\cdots$};
\draw[invarrow] (7,0) -- (7,1);
\draw[decorate,decoration=brace] (2,-0.2) -- node[below] {$i-1$} (0,-0.2);
\draw[decorate,decoration=brace] (7,-0.2) -- node[below] {$N-i-1$} (5,-0.2);
\end{tikzpicture}
\end{align}
The arrows attached to the vector spaces help us to keep track of the ordering of operators: moving forward w.r.t. the orientation of a line is reading an product of operators right to left.   The matrix $\check R^\bullet$ is represented as follows:
\begin{align}
\label{eq:R_i-bullet-graph}
\check R_i^{\bullet}(z_{i+1}/z_i)=\quad
\begin{tikzpicture}[baseline=0.5cm]
\draw[invarrow] (0,0) -- (0,1);
\node at (1,0.5) {$\cdots$};
\draw[invarrow] (2,0) -- (2,1);
\draw[invarrow=0.75] (3,0) -- node[pos=0.75,right] {$\scriptstyle z_{i+1}$} (4,1);
\draw[invarrow=0.75] (4,0) -- node[pos=0.75,left] {$\scriptstyle z_i$} (3,1);
\draw[invarrow] (5,0) -- (5,1);
\node at (6,0.5) {$\cdots$};
\draw[invarrow] (7,0) -- (7,1);
\draw[decorate,decoration=brace] (2,-0.2) -- node[below] {$i-1$} (0,-0.2);
\draw[decorate,decoration=brace] (7,-0.2) -- node[below] {$N-i-1$} (5,-0.2);
\node at (3.5,0.5) {$\bullet$};
\end{tikzpicture}
\end{align}

Let us recall the Yang--Baxter equation \eqref{eq:YB}, the unitarity relation \eqref{eq:unitarity} and the residue condition at $z=1$ \eqref{eq:R residue}. Graphically, these identities are represented as:
\begin{align}
    \label{eq:YB graph}
\begin{tikzpicture}[scale=0.6,baseline=0.8cm]
\rcheckarrows{0}{2}
\rcheck{1}{1}
\rcheck{0}{0}
\node[above] at (0,3) {$\scriptstyle x$};
\node[above] at (1,3) {$\scriptstyle y$};
\node[above] at (2,3) {$\scriptstyle z$};
\draw[arrow=0.4] (2,3) -- (2,2);
\draw (2,1) -- (2,0);
\draw (0,2) -- (0,1);
\end{tikzpicture}
&=
\begin{tikzpicture}[scale=0.6,baseline=0.8cm]
\rcheckarrows{1}{2}
\rcheck{0}{1}
\rcheck{1}{0}
\node[above] at (0,3) {$\scriptstyle x$};
\node[above] at (1,3) {$\scriptstyle y$};
\node[above] at (2,3) {$\scriptstyle z$};
\draw[arrow=0.4] (0,3) -- (0,2);
\draw (2,2) -- (2,1);
\draw (0,0) -- (0,1);
\end{tikzpicture}
\\
\label{eq:unitarity graph}
\begin{tikzpicture}[scale=0.8,baseline=1cm]
\rcheck{0}{0}
\rcheckarrows{0}{1}
\node[above] at (1,2) {$\scriptstyle y$};
\node[above] at (0,2) {$\scriptstyle x$};
\end{tikzpicture}
    &=f(x/y)
\begin{tikzpicture}[scale=0.8,baseline=0.5cm]
\draw[invarrow=0.5] (3,0) -- (3,1) node[above] {$\scriptstyle x$};
\draw[invarrow=0.5] (4,0) -- (4,1) node[above] {$\scriptstyle y$} ;
\end{tikzpicture}\\
\label{eq:R residue graph}
    \underset{x=y}{\text{Res}}\, \check R(x/y)&=
         (t^{1/2}-t^{-1/2}) \quad 
\begin{tikzpicture}[scale=0.8,baseline=0.5cm]
\draw[invarrow=0.5] (3,0) -- (3,1) node[above] {$\scriptstyle y$};
\draw[invarrow=0.5] (4,0) -- (4,1) node[above] {$\scriptstyle y$} ;
\end{tikzpicture}
\end{align}

A tensor $A\in \text{End}(V^{\otimes k})$ will be represented by a box with $k$ lines attached to it at the top and $k$ lines attached to it at the bottom. These lines represent the $k$ tensor factors of $V^{\otimes k}$, and they carry spectral parameters $z_i$. In some cases these parameters acquire shifts as they ``pass'' through $A$. These shifts are necessary features appearing in the matrix shuffle algebras of $\UU$, as explained in \cite{Ng_tale}. Hence we will encounter two types of tensors:
\begin{align}\label{eq:A_graph}
A(z_1\ldots z_k) = ~
\begin{tikzpicture}[scale=0.5,baseline=(current  bounding  box.center)]
\draw[invarrow=0.4] (1,0) -- (1,0.5);
\draw[invarrow=0.4] (2,0) -- (2,0.5);
\draw[invarrow=0.6] (1,1.5) -- (1,2);
\draw[invarrow=0.6] (2,1.5) -- (2,2);
\draw (0.5,0.5) -- (0.5,1.5) -- (2.5,1.5) -- (2.5,0.5) -- (0.5,0.5);
\node at (1.5,1) {$\scriptstyle A$};
\node[below] at (1,0) {$\scriptstyle z_1$};
\node[below] at (2,0) {$\scriptstyle z_k$};
\node at (1.5,0) {$\scriptstyle \ldots$};
\node[above] at (1,2) {$\scriptstyle qz_1$};
\node[above] at (2,2) {$\scriptstyle q z_k$};
\node at (1.5,2) {$\scriptstyle \ldots$};
\end{tikzpicture}
\quad\text{and}
\quad
A(z_1\ldots z_k) = ~
\begin{tikzpicture}[scale=0.5,baseline=(current  bounding  box.center)]
\draw[invarrow=0.4] (1,0) -- (1,0.5);
\draw[invarrow=0.4] (2,0) -- (2,0.5);
\draw[invarrow=0.6] (1,1.5) -- (1,2);
\draw[invarrow=0.6] (2,1.5) -- (2,2);
\draw (0.5,0.5) -- (0.5,1.5) -- (2.5,1.5) -- (2.5,0.5) -- (0.5,0.5);
\node at (1.5,1) {$\scriptstyle A$};
\node[below] at (1,0) {$\scriptstyle z_1$};
\node[below] at (2,0) {$\scriptstyle z_k$};
\node at (1.5,0) {$\scriptstyle \ldots$};
\node[above] at (1,2) {$\scriptstyle z_1$};
\node[above] at (2,2) {$\scriptstyle z_k$};
\node at (1.5,2) {$\scriptstyle \ldots$};
\end{tikzpicture}
\end{align}
The changes of the spectral parameters as they pass through such tensors will be made clear in our pictures and from the context. 

Using the diagrammatic representation \eqref{eq:R_i-graph} of $\check R_i$ we can write $\Gamma^+_{A,B}$ from \eqref{eq:Gamma_plus} as follows:
\begin{align}\label{eq:Gamma_plus_graph}
\Gamma^+_{A,B}=\quad 
\begin{tikzpicture}[scale=0.5,baseline=(current  bounding  box.center)]
\node[above] at (0,8) {$\scriptstyle q z_1$};
\node at (1.5,8) {$\scriptstyle \ldots$};
\node[above] at (3,8) {$\scriptstyle q z_{k+l}$};
\draw[invarrow=0.75] (0,7) -- (0,8);
\draw[invarrow=0.75] (3,7) -- (3,8);
\rcheckarrows{1}{7}
\rcheck{0}{6}
\rcheck{2}{6}
\rcheck{1}{5}
\draw (2-0.5,5) -- (2+1+0.5,5) -- (2+1+0.5,4) -- (2-0.5,4) -- (2-0.5,5);
\node at (2.5,4.5) {$\scriptstyle B$};
\draw (0,6) -- (0,4);
\draw (1,4) -- (1,5);
\draw (3,3) -- (3,4);
\draw (3,5) -- (3,6);
\rcheck{1}{3}
\rcheck{0}{2}
\rcheck{2}{2}
\rcheck{1}{1}
\draw (0,3) -- (0,4);
\draw (0,1) -- (0,2);
\draw (2-0.5,1) -- (2+1+0.5,1) -- (2+1+0.5,0) -- (2-0.5,0) -- (2-0.5,1);
\node at (2.5,0.5) {$\scriptstyle A$};
\draw (0,1) -- (0,-0.5);
\draw (1,1) -- (1,-0.5);
\draw (2,0) -- (2,-0.5);
\draw (3,2) -- (3,1);
\draw (3,0) -- (3,-0.5);
\node[below] at (0,-0.5) {$\scriptstyle z_1$};
\node at (1.5,-0.5) {$\scriptstyle \ldots$};
\node[below] at (3,-0.5) {$\scriptstyle z_{k+l}$};
\end{tikzpicture}
\end{align}
We note that $\check R_{\omega^k}(q z_1\ldots q z_{k+l})=\check R_{\omega^k}(z_1\ldots z_{k+l})$, so the crosses at the top in the diagram \eqref{eq:Gamma_plus_graph} correspond to the factor $\check R_{\omega^k}$ in \eqref{eq:Gamma_plus}. We indicated the spectral parameters at the top and at the bottom of this diagram which specifies the shifts of the parameters as they pass through $A$ and $B$. 

Let us return to Example \ref{ex:wheel u=1} where we computed the wheel condition of a tensor $X(z_1,z_2,z_3)$ for the composition $\lambda=(3)$. The form of the residue of $X(z_1,z_2,z_3)$ is given in \eqref{eqn:residue variables example R-check} and in the graphical language we have:
\begin{align}\label{eq:residue (k) graph}
\underset{\left\{z_1 = y,z_2 =q y,z_{3} = q^{2} y\right\}}{\normalfont{\text{Res}}}X(z_1,z_2, z_3)
 = 
(t^{\frac 12} - t^{-\frac 12})^{2} 
f \left(q \right)
\begin{tikzpicture}[scale=0.5,baseline=(current  bounding  box.center)]
\rcheck{0}{0}
\rcheck{1}{1}
\draw (1.5-0.1,2) -- (1.5-0.1,3) -- (2.5+0.1,3) -- (2.5+0.1,2) -- (1.5-0.1,2);
\node at (2,2.5) {$\scriptstyle X^{(3)}$};
\draw[arrow=0.5] (2,4) -- (2,3);
\draw (2,1) -- (2,0);
\draw[arrow=0.25] (1,4) -- (1,2);
\draw[arrow=0.166] (0,4) -- (0,1);
\node[above] at (0,4) {$\scriptstyle q y$};
\node[above] at (1,4) {$\scriptstyle q^2 y$};
\node[above] at (2,4) {$\scriptstyle q^3 y$};
\node[below] at (0,0-0.2) {$\scriptstyle y$};
\node[below] at (1,0-0.2) {$\scriptstyle q y$};
\node[below] at (2,0) {$\scriptstyle q^2 y$};
\end{tikzpicture}
\end{align}

\section{An anti-isomorphism of shuffle algebras}\label{sec:Psi}
In this section we introduce an anti-isomorphism $\Psi$ which relates the shuffle algebras $\CA^+$ and an algebra $\CA'$, which is obtained from $\CA^-$ by reversing the order of the tensor product spaces and inverting $q$ and $t$.  This anti-isomorphism can be extended to a map $\widetilde\Psi$ relating shuffle algebras associated to different supersymmetric quantum toroidal algebras. In the next section we will use both $\Psi$ and $\widetilde \Psi$ as tools for producing new commuting elements of matrix shuffle algebras.

\subsection{The shuffle algebra $\CA'$}\label{sec:A-prime}
We start by defining a transform $\Omega$ which acts on tensors in $\CV$ by:
\begin{align}\label{eq:Omega}
    \Omega(X_{1\ldots k}(z_1\ldots z_k)) = X_{k\ldots 1}(z_{k}\ldots z_{1})\Big{|}_{q,t\rightarrow q^{-1},t^{-1}},
    \qquad X(z_1\ldots z_k)\in \CV
\end{align}
It is clear that $\Omega(A * B)=\Omega(A) * \Omega(B)$ and that $\Omega$ is an involution. For any two tensors $A = A_{1\ldots k}(z_1\ldots z_k)$ and $B = B_{1\ldots l}(z_1\ldots z_l)$ in $\CV$ we define: 
\begin{multline}
        \label{eq:Gamma_prime}
    \Gamma'_{A,B}(z_1\ldots z_{k+l}) :=
    A_{1\ldots k}(z_{1}\ldots z_{k})
    \check R^\bullet_{\omega^{k}}(z_{k+1}\ldots  z_{k+l},q z_{1}\ldots q z_{k})\\
    B_{1\ldots l}(z_{k+1}\ldots z_{k+l}) 
    \check R_{\omega^l}(z_1\ldots z_{k+l})
\end{multline}
\begin{lem}
Let $A = A_{1\ldots k}(z_1\ldots z_k)\in\CV$ and $B = B_{1\ldots l}(z_1\ldots z_l)\in\CV$ and $\rho\in \mathfrak S_{k+l}$ be the longest element. 
The transform $\Omega$ satisfies:
\begin{align}\label{eq:Omega_props}
\Omega(\Gamma^-_{A,B}) = \Gamma'_{\Omega(A),\Omega(B)},
\qquad 
\Omega(\check R_{\sigma}) = \check R_{\rho \sigma \rho}
\end{align}
where $\Gamma^-$ is defined in \eqref{eq:Gamma_minus}.
\end{lem}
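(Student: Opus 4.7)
The plan is to prove the second identity first, then use it (together with an analogous single-crossing statement for $\check R^\bullet$) to establish the first. Throughout, the key observations are that $\Omega$ is multiplicative on operators—being the composition of $q,t\mapsto q^{-1},t^{-1}$ with conjugation by the reversal permutation $\rho$—and that conjugation by $\rho$ sends $s_i$ to $s_{k+l-i}$, and hence $\omega^k$ to $\omega^l$ (a direct check on the formula for $\omega^j$).

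For the second identity, I would start by verifying the single-crossing statement $P\cdot\check R(z)|_{t\mapsto t^{-1}}\cdot P=\check R(1/z)$ by direct inspection of \eqref{eq:R-matrix super}. The diagonal coefficient $\epsilon_i(t^{-\epsilon_i/2}-zt^{\epsilon_i/2})/(1-z)$ is sent under $t\mapsto t^{-1}$ to the corresponding entry of $\check R(1/z)$ after using $(1-1/z)^{-1}=-z/(1-z)$; the off-diagonal sign produced by $(t^{-1/2}-t^{1/2})\mapsto-(t^{-1/2}-t^{1/2})$ is absorbed by a matching sign coming from $z\mapsto 1/z$ together with the exchange $\delta_{i<j}\leftrightarrow\delta_{i>j}$ caused by $P$-conjugation. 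Unpacking \eqref{eq:Omega} then yields the base case $\Omega(\check R_i(z_{i+1}/z_i))=\check R_{k+l-i}(z_i/z_{i+1})$. The general case follows by induction on $\ell(\sigma)$: for a reduced decomposition $\sigma=s_i\sigma'$, multiplicativity of $\Omega$ applied to \eqref{eq:Rch-sigma} combined with the inductive hypothesis produces a reduced expression for $\check R_{s_{k+l-i}\cdot\rho\sigma'\rho}=\check R_{\rho\sigma\rho}$, with spectral parameters correctly reversed.

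For the first identity, I would apply $\Omega$ termwise to the four factors of $\Gamma^-_{A,B}$ in \eqref{eq:Gamma_minus} using multiplicativity. Reversal of positions sends $\{l+1,\ldots,k+l\}$ onto $\{1,\ldots,k\}$ and $\{k+1,\ldots,k+l\}$ onto $\{1,\ldots,l\}$, and the simultaneous reversal of the spectral parameters yields
$$\Omega(A_{l+1\ldots k+l}(z_{l+1}\ldots z_{l+k}))=\Omega(A)_{1\ldots k}(z_1\ldots z_k),\qquad \Omega(B_{k+1\ldots k+l}(z_1\ldots z_l))=\Omega(B)_{1\ldots l}(z_{k+1}\ldots z_{k+l}).$$
Applying the second identity to $\sigma=\omega^k$ and using $\rho\omega^k\rho=\omega^l$ converts $\check R_{\omega^k}$ into $\check R_{\omega^l}$ with the $z$-reversal absorbed. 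For the remaining $\check R^\bullet_{\omega^l}$ factor I would derive an analogue of the single-crossing identity from \eqref{eq:R-bullet-super}: conjugation by $P$ and $t\mapsto t^{-1}$ moves the factors $D\otimes\id$ and $\id\otimes D^{-1}$ across tensor positions and turns the shift $t^{m-n}$ into $t^{n-m}$; combined with the crossing-unitarity relation \eqref{eq:crossing_unitarity}, the resulting operator is identified with the $\check R^\bullet$ entry needed to build $\check R^\bullet_{\omega^k}$. An induction analogous to the one in Step 2 then handles the full product $\check R^\bullet_{\omega^l}$. Reassembling the four transformed factors matches \eqref{eq:Gamma_prime}.

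The main obstacle is the $\check R^\bullet$ transformation: the matrix $D$ of \eqref{eq:D-super} does not invert cleanly under $t\mapsto t^{-1}$, due to the grading-dependent signs and the asymmetric shift $t^{m-n}\leftrightarrow t^{n-m}$, so telescoping the $D$-conjugations through the long product $\check R^\bullet_{\omega^l}$ requires care. I expect crossing-unitarity \eqref{eq:crossing_unitarity} to be the right mechanism for reducing the $\check R^\bullet$-question to the already-verified transformation properties of $\check R$; if this stalls, a fallback is a direct verification for two adjacent factors using Yang--Baxter \eqref{eq:YB} and then an induction along a reduced word for $\omega^l$.
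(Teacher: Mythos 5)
Your proposal follows essentially the same route as the paper: verify the single-crossing identity $P\,\check R(1/z)\big|_{t\to t^{-1}}P=\check R(z)$ directly from \eqref{eq:R-matrix super}, extend to general $\sigma$ by induction along a reduced word via \eqref{eq:Rch-sigma}, and then obtain the first identity by applying $\Omega$ factor by factor to \eqref{eq:Gamma_minus}. You are right to single out the $\check R^\bullet$ factor (the paper silently subsumes it under the ``similarly defined $\check R^\bullet_\sigma$'' convention), but crossing unitarity \eqref{eq:crossing_unitarity} is not the right mechanism there and would lead you astray: the needed single-crossing statement $P\,\check R^\bullet(1/u)\big|_{t\to t^{-1}}P=\check R^\bullet(u)$ follows directly from the $\check R$ case once you note two elementary facts — the matrix $D$ of \eqref{eq:D-super} \emph{does} invert cleanly, $D\big|_{t\to t^{-1}}=D^{-1}$ (including the fermionic block, since $(-t^{s-1}t^{-n+1})^{-1}=-t^{n-s}$), and $\check R(w)$ commutes with $D\otimes D$ by weight preservation, which lets you trade $(\id\otimes D^{-1})(\cdot)(D\otimes\id)$ for $(D\otimes\id)(\cdot)(\id\otimes D^{-1})$. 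The two $t$-inversions (one acting on the internal entries of $\check R$, one on the shift $t^{m-n}$ in \eqref{eq:R-bullet-super}) then cancel, and your fallback of direct verification plus induction along a reduced word for $\omega^l$ is exactly what closes the argument.
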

\begin{proof}
The first identity in \eqref{eq:Omega_props} follows by applying $\Omega$ to the definition of $\Gamma^-$ and using the second identity in \eqref{eq:Omega_props}. For $\sigma$ a simple  transposition the second identity in \eqref{eq:Omega_props} is a property of the $\check R$-matrix which can be verified directly. Using this and \eqref{eq:Rch-sigma} one verifies the second identity for a general $\sigma$.
\end{proof}
\begin{defn}\label{def:A prime}
Let $\CA'$ be the algebra of tensors $\{\Omega(X)\}_{X \in \CA^-}$ with the multiplication $*'$: 
\begin{align}
    \label{eq:A*'B-Rcheck}
    A*'B=\sum_{\sigma\in \mathfrak{S}_{k+l}/\mathfrak{S}_k\times \mathfrak{S}_l}\frac{1}{f_\sigma(z)}
    \check R_\sigma^{-1}
    \Gamma'_{A,B}(z_\sigma) \check R_\sigma
\end{align}
for any $A = A_{1\ldots k}(z_1\ldots z_k)$ and $B = B_{1\ldots l}(z_1\ldots z_l)$ in $\CA'$.
\end{defn}
The shuffle product $*'$ in \eqref{eq:A*'B-Rcheck} is related to the shuffle product $*$ of the algebra $\CA^-$ by the two properties in \eqref{eq:Omega_props}.
The graphical representation of $\Gamma'$ is given by:
\begin{align}
    \label{eq:Gamma_prime-graph}
    \Gamma'_{A,B}
    :=
\begin{tikzpicture}[scale=0.5,baseline=(current  bounding  box.center)]
\node[above] at (0,8) {$\scriptstyle q z_1$};
\node at (1.5,8) {$\scriptstyle \ldots$};
\node[above] at (3,8) {$\scriptstyle q z_{k+l}$};
\draw[invarrow=0.75] (0,7) -- (0,8);
\draw[invarrow=0.75] (3,7) -- (3,8);
\rcheckarrows{1}{7}
\rcheck{0}{6}
\rcheck{2}{6}
\rcheck{1}{5}
\draw (-0.5,5) -- (1+0.5,5) -- (1+0.5,4) -- (-0.5,4) -- (-0.5,5);
\node at (0.5,4.5) {$\scriptstyle B$};
\draw (0,6) -- (0,5);
\draw (2,4) -- (2,5);
\draw (3,3) -- (3,6);
\rcheck{1}{3}
\rcheck{0}{2}
\rcheck{2}{2}
\rcheck{1}{1}
\node at (1.5,3.5) {$\scriptstyle \bullet$};
\node at (0.5,2.5) {$\scriptstyle \bullet$};
\node at (2.5,2.5) {$\scriptstyle \bullet$};
\node at (1.5,1.5) {$\scriptstyle \bullet$};
\draw (0,3) -- (0,4);
\draw (0,1) -- (0,2);
\draw (-0.5,1) -- (1+0.5,1) -- (1+0.5,0) -- (-0.5,0) -- (-0.5,1);
\node at (0.5,0.5) {$\scriptstyle A$};
\draw (0,0) -- (0,-0.5);
\draw (1,0) -- (1,-0.5);
\draw (2,1) -- (2,-0.5);
\draw (3,2) -- (3,-0.5);
\node[below] at (0,-0.5) {$\scriptstyle z_1$};
\node at (1.5,-0.5) {$\scriptstyle \ldots$};
\node[below] at (3,-0.5) {$\scriptstyle z_{k+l}$};
\end{tikzpicture}
\end{align}

\subsection{The maps $\Psi$ and $\Psi'$}
In this section we define two maps $\Psi$ and $\Psi'$ which we will later show to be algebra isomorphisms. A key ingredient in these two maps is the tensor $\check R_{\omega^l}$. For two sets of parameters $z=(z_1\ldots z_k)$ and $w=(w_1\ldots w_l)$ and $\omega^l\in \mathfrak{S}_{k+l}$ we have:
\begin{align}
    \check{R}_{\omega^l}(z,w)= \check{R}_{\omega^l}(z_1\ldots z_k,w_1\ldots w_l)
    =\prod_{j=1}^k\prod_{i=1}^l \check{R}_{l+j-i}\left(\frac{w_{l-i+1}}{z_j}\right)
\end{align}
Graphically we can represent $\check{R}_{\omega^l}$ by: 
\begin{align}\label{eq:Z_graph}
\check{R}_{\omega^l}(z,w)=\quad 
\begin{tikzpicture}[scale=0.5,baseline=(current  bounding  box.center)]
\rcheckarrows{1}{2}
\rcheck{0}{1}
\rcheck{2}{1}
\rcheck{1}{0}
\draw[invarrow=0.75] (3,2) -- (3,3);
\draw (3,0) -- (3,1);
\draw[invarrow=0.75] (0,2) -- (0,3);
\draw (0,0) -- (0,1);
\node[below] at (0,0) {$\scriptstyle w_1$};
\node[below] at (1,0) {$\scriptstyle w_l$};
\node[below] at (2,0) {$\scriptstyle z_{1}$};
\node[below] at (3,0) {$\scriptstyle z_{k}$};
\node at (0.5,0) {$\scriptstyle \ldots $};
\node at (2.5,0) {$\scriptstyle \ldots $};
\node[above] at (0,3) {$\scriptstyle z_1$};
\node[above] at (1,3) {$\scriptstyle z_k$};
\node[above] at (2,3) {$\scriptstyle w_{1}$};
\node[above] at (3,3) {$\scriptstyle w_{l}$};
\node at (0.5,3) {$\scriptstyle \ldots $};
\node at (2.5,3) {$\scriptstyle \ldots $};
\end{tikzpicture}
\end{align}
Thanks to the Yang--Baxter equation, the tensor $\check R_{\omega^l}$ satisfies:
\begin{align}\label{eq:R-omega_exchange1}
    \check{R}_{\omega^l}(z,w)(\ldots  z_{i+1}, z_{i}\ldots,w) \check R_i(z_{i+1}/z_i)&=
    \check R_{l+i}(z_{i+1}/z_i) \check{R}_{\omega^l}(\ldots z_{i},  z_{i+1}\ldots,w) \\
    \label{eq:R-omega_exchange2}
    \check{R}_{\omega^l}(z,w)(z,\ldots  w_{j+1}, w_{j}\ldots) \check R_{k+j}(w_{j+1}/w_j)&=
    \check R_{j}(w_{j+1}/w_j) \check{R}_{\omega^l}(z,\ldots w_{j},  w_{j+1}\ldots)
\end{align}
for $i=1\ldots k-1$ and $j=1\ldots l-1$.
\begin{defn}
    For any $X_{1\ldots k}(z_1\ldots z_k)\in\CV$ we define two  maps:
\begin{align}\label{eq:Psi}
\Psi\left [ X(z)\right]&: =  \Tr_{k+1\ldots 2k}
\left[
\check{R}_{\omega^k}(q z,z)
X_{k+1\ldots 2k}(z)
\right]\\
\label{eq:Psi_prime}
\Psi'\left [ X(z)\right]&: =  \Tr_{1\ldots k}
\left[
\check{R}^\bullet_{\omega^k}(z,q z)
X_{1\ldots k}(z)
\right]
\end{align}
\end{defn}
The graphical representation of \eqref{eq:Psi} follows from \eqref{eq:A_graph} and \eqref{eq:Z_graph}, and it may be drawn in the two ways below due to the cyclic property of the trace:
\begin{align}\label{eq:Psi_graph}
\Psi[X(z)]=\quad 
\begin{tikzpicture}[scale=0.5,baseline=(current  bounding  box.center)]
\rcheck{1}{2}
\rcheck{0}{1}
\rcheck{2}{1}
\rcheck{1}{0}
\draw (3,2) -- (3,3);
\draw (2-0.5,3) -- (3+0.5,3) -- (3+0.5,4) -- (2-0.5,4) -- (2-0.5,3);
\node at (2.5,3.5) {$\scriptstyle X$};
\draw [rounded corners=5pt] (3,1) -- (4,1) -- (4,4.5) -- (3,4.5) -- (3,4);
\draw [rounded corners=5pt] (2,0) -- (5,0) -- (5,5) -- (2,5) -- (2,4);
\draw[invarrow=0.85] (0,2) -- (0,5);
\draw[invarrow=0.78] (1,3) -- (1,5);
\draw (0,1) -- (0,0);
\node[above] at (0,5) {$\scriptstyle q z_1$};
\node at (0.5,5) {$\scriptstyle \ldots$};
\node[above] at (1,5) {$\scriptstyle q z_k$};
\node[below] at (0,0) {$\scriptstyle z_1$};
\node at (0.5,0) {$\scriptstyle \ldots$};
\node[below] at (1,0) {$\scriptstyle z_k$};
\end{tikzpicture}
\qquad
=
\begin{tikzpicture}[scale=0.5,baseline=(current  bounding  box.center)]
\rcheckarrowleft{1}{4}
\rcheck{0}{3}
\rcheck{2}{3}
\rcheck{1}{2}
\draw (2-0.5,2) -- (3+0.5,2) -- (3+0.5,1) -- (2-0.5,1) -- (2-0.5,2);
\node at (2.5,1.5) {$\scriptstyle X$};
\draw (3,3) -- (3,2);
\draw (0,3) -- (0,0);
\draw (1,2) -- (1,0);
\draw[invarrow=0.75] (0,4) -- (0,5);
\node[above] at (0,5) {$\scriptstyle q z_1$};
\node at (0.5,5) {$\scriptstyle \ldots$};
\node[above] at (1,5) {$\scriptstyle q z_k$};
\draw [rounded corners=5pt] (3,4) -- (4,4) -- (4,0.5) -- (3,0.5) -- (3,1);
\draw [rounded corners=5pt] (2,5) -- (5,5) -- (5,0) -- (2,0) -- (2,1);
\node[below] at (0,0) {$\scriptstyle z_1$};
\node at (0.5,0) {$\scriptstyle \ldots$};
\node[below] at (1,0) {$\scriptstyle z_k$};
\end{tikzpicture}
\end{align}
The graphical representation of $\Psi'[X(z)]$ is analogous. These two maps have several important properties which are outlined in the following Lemma.
\begin{lem}\label{lem:prop_psi}
For $X(z_1\ldots z_k)\in\CV$ we have the relations:
\begin{align}\label{eq:Psi-exchange}
    &\check R_{\sigma}^{-1}
    \Psi\left[X(z_\sigma)\right]
    \check R_\sigma=
    \Psi\left[\check R_{\sigma}^{-1}X(z_\sigma)\check R_{\sigma} \right]
    \\
    \label{eq:Psi_prime-exchange}
    &\check R_{\sigma}^{-1}
    \Psi'\left[X(z_\sigma)\right]
    \check R_\sigma=
    \Psi'\left[\check R_{\sigma}^{-1}X(z_\sigma)\check R_{\sigma} \right]
\end{align}
If $X$ is a symmetric tensor then $\Psi[X(z)]$ and $\Psi'[X(z)]$ are also symmetric tensors:
\begin{align}
    \label{eq:Psi-symmetric}
    \check R_\sigma 
\Psi[X(z)] &= 
\Psi[X(z_{\sigma})]
\check R_\sigma\\
    \label{eq:Psi_prime-symmetric}
    \check R_\sigma 
\Psi'[X(z)] &= 
\Psi'[X(z_{\sigma})]
\check R_\sigma
\end{align}
\end{lem}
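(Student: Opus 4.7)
The plan is to reduce each of the four identities to the case $\sigma = s_i$ of a simple transposition, using the recursive definition \eqref{eq:Rch-sigma} and induction on the length of a reduced word for $\sigma$. I will then establish the simple-transposition case using the two exchange relations \eqref{eq:R-omega_exchange1} and \eqref{eq:R-omega_exchange2}.

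For \eqref{eq:Psi-exchange} with $\sigma = s_i$, the key technical step is to derive, by applying \eqref{eq:R-omega_exchange1} to the swap in the first argument $qz$ and then \eqref{eq:R-omega_exchange2} to the swap in the second argument $z$, the factorization
\begin{equation*}
\check R_{\omega^k}(qz_{s_i}, z_{s_i}) = \check R_i(z_{i+1}/z_i) \, \check R_{k+i}(z_{i+1}/z_i) \, \check R_{\omega^k}(qz, z) \, \check R_{k+i}(z_{i+1}/z_i)^{-1} \, \check R_i(z_{i+1}/z_i)^{-1},
\end{equation*}
where the commutativity of $\check R_i$ (acting on external factors $i, i+1$) with $\check R_{k+i}$ (acting on internal factors $k+i, k+i+1$) is crucial. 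Substituting this into $\Psi[X(z_{s_i})]$ and conjugating by $\check R_i^{-1}$ on the left and $\check R_i$ on the right, both of which act on the untraced external factors and can be brought inside the trace, the outer pair $\check R_i^{-1} \check R_i$ cancels immediately. The remaining inner $\check R_i^{-1}$ commutes with the internal operator $X_{k+1\ldots 2k}(z_{s_i})$ and can be moved past $X$ to cancel with the outer $\check R_i$ on the right. Cycling the internal $\check R_{k+i}$ through the trace then delivers
\begin{equation*}
\check R_i^{-1} \Psi[X(z_{s_i})] \check R_i = \Tr_{k+1\ldots 2k}\bigl[\check R_{\omega^k}(qz,z) \, \check R_{k+i}^{-1} \, X_{k+1\ldots 2k}(z_{s_i}) \, \check R_{k+i}\bigr] = \Psi\bigl[\check R_i^{-1} X(z_{s_i}) \check R_i\bigr],
\end{equation*}
which is the $s_i$ case of \eqref{eq:Psi-exchange}.

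Identity \eqref{eq:Psi_prime-exchange} is established by the parallel argument in which $\check R_{\omega^k}$ is replaced by $\check R^\bullet_{\omega^k}$ and the trace is taken over the first $k$ tensor factors rather than the last $k$. The requisite exchange relations for $\check R^\bullet_{\omega^k}$ follow from the definition \eqref{eq:R-bullet-super}, since the diagonal conjugation by $D$ and the spectral shift by $t^{m-n}$ preserve the Yang--Baxter structure and hence the relations analogous to \eqref{eq:R-omega_exchange1}--\eqref{eq:R-omega_exchange2}. The symmetric-tensor identities \eqref{eq:Psi-symmetric} and \eqref{eq:Psi_prime-symmetric} are then immediate corollaries: the defining property \eqref{eqn:symmetric tensor} of a symmetric tensor $X$ implies $\check R_\sigma^{-1} X(z_\sigma) \check R_\sigma = X(z)$, so the right-hand sides of \eqref{eq:Psi-exchange} and \eqref{eq:Psi_prime-exchange} collapse to $\Psi[X(z)]$ and $\Psi'[X(z)]$ respectively, and rearranging produces the symmetry statements.

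The main obstacle I anticipate is the careful parameter bookkeeping in deriving the factorization of $\check R_{\omega^k}(qz_{s_i}, z_{s_i})$: both exchange relations must compose cleanly, the cancellation $qz_{i+1}/qz_i = z_{i+1}/z_i$ must be used, and the commutativity of external with internal $\check R$-matrices must be tracked. A secondary subtlety is the verification of the analogue exchange relations for $\check R^\bullet_{\omega^k}$, which should be formal but requires unwinding the definition \eqref{eq:R-bullet-super}.
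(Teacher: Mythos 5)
Your proposal is correct and follows essentially the same route as the paper: reduction to simple transpositions, the factorization of $\check R_{\omega^k}(qz_{s_i},z_{s_i})$ you derive from \eqref{eq:R-omega_exchange1}--\eqref{eq:R-omega_exchange2} is exactly the paper's identity \eqref{eq:R_omega_exchange_3}, and the subsequent cancellation and cyclicity-of-trace argument matches the paper's computation step for step. The treatment of \eqref{eq:Psi-symmetric} as an immediate corollary via \eqref{eqn:symmetric tensor}, and of the $\Psi'$ statements as the analogous argument with $\check R^\bullet_{\omega^k}$, likewise coincides with the paper.
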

\begin{proof}
It is enough to prove \eqref{eq:Psi-exchange} for all simple transpositions, i.e.:
\begin{align}\label{eq:Psi-exchange_i}
\check R_{i}(z_{i+1}/z_i)^{-1}
    \Psi\left[X(\ldots z_{i+1},z_i\ldots)\right]
    \check R_i(z_{i+1}/z_i)=
    \Psi\left[\check R_{i}(z_{i+1}/z_i)^{-1}X(\ldots z_{i+1},z_i\ldots)\check R_{i}(z_{i+1}/z_i) \right]    
\end{align}
Consider \eqref{eq:R-omega_exchange1}  and \eqref{eq:R-omega_exchange2} with $l=k$, $j=i$ and $z_a\rightarrow qz_a$ and $w_a\rightarrow  z_a$, for $a=1\ldots k$. These two equations can be combined to produce the following identity:
\begin{multline}\label{eq:R_omega_exchange_3}
\check R_i(z_{i+1}/z_i)^{-1} \check R_{\omega^k}(\ldots qz_{i+1},q z_i\ldots,\ldots z_{i+1},z_i\ldots) \check R_i(z_{i+1}/z_i)\\
=
\check R_{k+i}(z_{i+1}/z_i) \check R_{\omega^k}(qz,z) \check R_{k+i}(z_{i+1}/z_i)^{-1}
\end{multline}
Plugging the formula above in the left hand side of \eqref{eq:Psi-exchange_i} (and using the definition of $\Psi$ in \eqref{eq:Psi}, the identity \eqref{eq:R_omega_exchange_3} and the cyclicity of the trace) gives:
\begin{align*}
&\check R_{i}(z_{i+1}/z_i)^{-1}
    \Psi\left[X(\ldots z_{i+1},z_i\ldots)\right]
    \check R_i(z_{i+1}/z_i)\\
    =
&
   \Tr_{k+1\ldots 2k}
\left[
\check R_{k+i}(z_{i+1}/z_i) \check R_{\omega^k}(qz,z) \check R_{k+i}(z_{i+1}/z_i)^{-1}
X_{k+1\ldots 2k}(\ldots z_{i+1},z_i\ldots)
\right]\\
   =
&
   \Tr_{k+1\ldots 2k}
\left[ \check R_{\omega^k}(qz,z) \check R_{k+i}(z_{i+1}/z_i)^{-1}
X_{k+1\ldots 2k}(\ldots z_{i+1},z_i\ldots)
\check R_{k+i}(z_{i+1}/z_i)
\right]\\
=
    &\Psi\left[\check R_{i}(z_{i+1}/z_i)^{-1}X(\ldots z_{i+1},z_i\ldots)\check R_{i}(z_{i+1}/z_i) \right]    
\end{align*}
This proves \eqref{eq:Psi-exchange_i}, which implies \eqref{eq:Psi-exchange}. If $X(z)$ is a symmetric tensor then \eqref{eq:Psi-symmetric} holds by \eqref{eq:Psi-exchange} and \eqref{eqn:symmetric tensor}. The proofs of \eqref{eq:Psi_prime-exchange} and \eqref{eq:Psi_prime-symmetric} are analogous.
\end{proof}

\subsection{A shuffle algebras anti-isomorphism}
Here we show that $\Psi$ and $\Psi'$ are anti-isomorphisms and are inverses of each other.
\begin{prop}
For any tensors $A(z_1\ldots z_l), B(z_1\ldots z_k)\in \CV$ we have:
\begin{align}\label{eq:Gamma-Psi}
&\Gamma^+_{\Psi[A],\Psi[B]}   = \Psi[\Gamma'_{B,A}]\\
\label{eq:Gamma-Psi-prime}
&\Gamma'_{\Psi'[A],\Psi'[B]}   = \Psi'[\Gamma^+_{B,A}]
\end{align}
Additionally we have:
\begin{align}
    \label{eq:Psi-inv}
    \Psi[\Psi'[A(z)]] = \Psi'[\Psi[A(z)]] = A(z)
\end{align}
\end{prop}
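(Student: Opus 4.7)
My approach is entirely diagrammatic: expand both sides of each identity using the pictures \eqref{eq:Gamma_plus_graph}, \eqref{eq:Gamma_prime-graph}, \eqref{eq:Psi_graph}, and verify the equality of diagrams using the Yang--Baxter equation \eqref{eq:YB graph}, the cyclicity of the trace, and the crossing unitarity relation \eqref{eq:crossing_unitarity}.

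For the first identity $\Gamma^+_{\Psi[A],\Psi[B]}=\Psi[\Gamma'_{B,A}]$, I would draw the left-hand side as a sandwich: the $\check R_{\omega^k}$ and $\check R_{\omega^l}$ braids coming from the definition of $\Gamma^+$ appear on the outside, and the two ``$\Psi$--cylinders'' for $\Psi[A]$ and $\Psi[B]$ (each containing $A$ or $B$ together with its own trace loop closing $k$, resp.\ $l$, strands) are nested inside. Using the exchange relations \eqref{eq:R-omega_exchange1}--\eqref{eq:R-omega_exchange2} together with repeated Yang--Baxter moves, the intermediate braids of $\Gamma^+$ can be slid through the two trace loops, which then merge into a single loop via cyclicity; the result is precisely the diagram for $\Psi$ applied to a single $\Gamma$--type ladder involving $A$ and $B$. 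The reason the right-hand side involves $\Gamma'$ (built from $\check R^\bullet$) rather than $\Gamma^+$ (built from $\check R$) is that every time an ordinary $\check R$--crossing is dragged around a trace loop, the $D$--conjugation and $t^{m-n}$--shift built into \eqref{eq:R-bullet-super} convert it into a $\check R^\bullet$--crossing; this is essentially the algebraic content of crossing unitarity. The second identity $\Gamma'_{\Psi'[A],\Psi'[B]}=\Psi'[\Gamma^+_{B,A}]$ is proved by the mirror argument with the roles of $\check R$ and $\check R^\bullet$ interchanged. The principal obstacle in both identities is careful bookkeeping of the spectral-parameter shifts $z_i \leftrightarrow q z_i$ and the reversal of tensor positions (which is why the order of $A$ and $B$ flips between the two sides) accompanying the passage of strands through a trace loop.

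For the third identity $\Psi[\Psi'[A(z)]]=A(z)$, I would stack the $\Psi'$--diagram directly inside the $\Psi$--diagram, yielding a figure in which the outer $\check R_{\omega^k}(qz,z)$ braid coming from $\Psi$ sits immediately adjacent to the $\check R^\bullet_{\omega^k}(z,qz)$ braid coming from $\Psi'$, with matched spectral parameters and with the two trace loops traversing the same bundle of strands in opposite directions. Crossing unitarity \eqref{eq:crossing_unitarity} is precisely the local identity that collapses each adjacent $\check R/\check R^\bullet$ pair sitting on such traced strands to the identity; applying it to every pair (using Yang--Baxter to line them up where needed) reduces the whole picture to $A(z)$ sandwiched between two trivial closed loops, which then contract without contributing any scalar factor. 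The equality $\Psi'[\Psi[A(z)]]=A(z)$ follows by the symmetric argument. Together with the first two identities this exhibits $\Psi$ as the anti-isomorphism $\CA' \to \CA^+$ with inverse $\Psi'$, since the shuffle product \eqref{eq:A*B-Rcheck} and \eqref{eq:A*'B-Rcheck} are expressed directly in terms of $\Gamma^+$ and $\Gamma'$.
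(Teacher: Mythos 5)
Your proposal is correct and follows essentially the same route as the paper: a diagrammatic argument combining Yang--Baxter moves, cyclicity of the trace, and crossing unitarity, with the order of $A$ and $B$ reversing and $\check R^\bullet$ entering for exactly the reasons you identify. The one caveat is that the merging of the two trace loops is not pure cyclicity: the paper isolates this step as a standalone trace identity, $\Tr\left[F\otimes G\right]=\Tr\left[\check R^\bullet_{\omega^k}(w,z)\,F\,\check R_{\omega^l}(z,w)\,G\right]$ (Lemma \ref{lem:traces}, proved by induction via crossing unitarity in Appendix \ref{app:trace}), which is the precise form of the ``dragging a crossing around a trace loop converts $\check R$ into $\check R^\bullet$'' mechanism you describe, and only afterwards applies the repeated Yang--Baxter rearrangement.
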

\begin{proof}
We give a graphical proof. 
Using \eqref{eq:Gamma_plus_graph} and \eqref{eq:Psi_graph} we can represent $\Gamma^+_{\Psi[A],\Psi[B]}$ as:
\begin{align*}
\Gamma^+_{\Psi[A],\Psi[B]}
=
\begin{tikzpicture}[scale=0.4,baseline=(current  bounding  box.center)]
\rcheckarrows{1}{6}
\rcheck{0}{5}
\rcheck{2}{5}
\rcheck{1}{4}
\draw[invarrow=0.75] (0,6) -- (0,7);
\draw[invarrow=0.75] (3,6) -- (3,7);
\draw (0,4) -- (0,5);
\node[above] at (0,7) {$\scriptstyle q z_1$};
\node at (0.5,7) {$\scriptstyle \ldots$};
\node at (2.5,7) {$\scriptstyle \ldots$};
\node[above] at (3,6.93) {$\scriptstyle q z_{l+k}$};
\rcheck{3}{4}
\rcheck{2}{3}
\rcheck{4}{3}
\rcheck{3}{2}
\draw (5,4) -- (5,5);
\draw (4-0.5,5) -- (5+0.5,5) -- (5+0.5,6) -- (4-0.5,6) -- (4-0.5,5);
\node at (4.5,5.5) {$\scriptstyle B$};
\draw [rounded corners=5pt] (5,3) -- (6,3) -- (6,6.5) -- (5,6.5) -- (5,6);
\draw [rounded corners=5pt] (4,2) -- (7,2) -- (7,7) -- (4,7) -- (4,6);
\draw (0,4) -- (0,2);
\draw (1,4) -- (1,3);
\rcheck{1}{2}
\rcheck{0}{1}
\rcheck{2}{1}
\rcheck{1}{0}
\rcheck{3}{0}
\rcheck{2}{-1}
\rcheck{4}{-1}
\rcheck{3}{-2}
\draw (4-0.5,-2) -- (5+0.5,-2) -- (5+0.5,-3) -- (4-0.5,-3) -- (4-0.5,-2);
\node at (4.5,-2.5) {$\scriptstyle A$};
\draw (5,-1) -- (5,-2);
\draw (0,1) -- (0,-4);
\draw (1,0) -- (1,-4);
\draw (2,-1) -- (2,-4);
\draw (3,-2) -- (3,-4);
\draw [rounded corners=5pt] (5,0) -- (6,0) -- (6,-3.5) -- (5,-3.5) -- (5,-3);
\draw [rounded corners=5pt] (4,1) -- (7,1) -- (7,-4) -- (4,-4) -- (4,-3);
\node[below] at (0,-4) {$\scriptstyle z_1$};
\node at (0.5,-4) {$\scriptstyle \ldots$};
\node at (2.5,-4) {$\scriptstyle \ldots$};
\node[below] at (3,-4) {$\scriptstyle z_{l+k}$};
\end{tikzpicture}
\end{align*}
In $\Gamma^+_{\Psi[A],\Psi[B]}$ we have traces over $l$ vector spaces corresponding to $\Psi[A]$ and traces over $k$ vector spaces corresponding to $\Psi[B]$. In Lemma \ref{lem:traces} (specifically formula \eqref{eq:Y-graph}) we will show that the picture above equals:
\begin{align*}
\Gamma^+_{\Psi[A],\Psi[B]}
=
\begin{tikzpicture}[scale=0.4,baseline=(current  bounding  box.center)]
\rcheckarrows{1}{6}
\rcheck{0}{5}
\rcheck{2}{5}
\rcheck{1}{4}
\draw[invarrow=0.75] (0,6) -- (0,7);
\draw[invarrow=0.75] (3,6) -- (3,7);
\draw (0,4) -- (0,5);
\rcheck{3}{4}
\rcheck{2}{3}
\rcheck{4}{3}
\rcheck{3}{2}
\draw (5,4) -- (5,5);
\draw (4-0.5,5) -- (5+0.5,5) -- (5+0.5,6) -- (4-0.5,6) -- (4-0.5,5);
\node at (4.5,5.5) {$\scriptstyle B$};
\draw (0,4) -- (0,2);
\draw (1,4) -- (1,3);
\rcheck{1}{2}
\rcheck{0}{1}
\rcheck{2}{1}
\rcheck{1}{0}
\rcheck{5}{2}
\rcheck{4}{1}
\rcheck{6}{1}
\rcheck{5}{0}
\rcheck{3}{0}
\rcheck{2}{-1}
\rcheck{4}{-1}
\rcheck{3}{-2}
\draw (4-0.5,-2) -- (5+0.5,-2) -- (5+0.5,-3) -- (4-0.5,-3) -- (4-0.5,-2);
\node at (4.5,-2.5) {$\scriptstyle A$};
\draw (5,-1) -- (5,-2);
\draw (0,1) -- (0,-8);
\draw (1,0) -- (1,-8);
\draw (2,-1) -- (2,-8);
\draw (3,-2) -- (3,-8);
\draw (6,-3) -- (6,0);
\draw (7,-4) -- (7,1);
\rcheck{5}{-4}
\rcheck{4}{-5}
\rcheck{6}{-5}
\rcheck{5}{-6}
\node at (5.5,-3.5) {$\scriptstyle \bullet$};
\node at (4.5,-4.5) {$\scriptstyle \bullet$};
\node at (6.5,-4.5) {$\scriptstyle \bullet$};
\node at (5.5,-5.5) {$\scriptstyle \bullet$};
\draw (4,-4) -- (4,-3);
\draw [rounded corners=5pt] (7,2) -- (8,2) -- (8,-5) -- (7,-5);
\draw [rounded corners=5pt] (6,3) -- (9,3) -- (9,-6) -- (6,-6);
\draw [rounded corners=5pt] (5,6) -- (5,6.5) -- (10,6.5) -- (10,-7) -- (5,-7) -- (5,-6);
\draw [rounded corners=5pt] (4,6) -- (4,7.5) -- (11,7.5) -- (11,-8) -- (4,-8) -- (4,-5);
\end{tikzpicture}
=
~~
\begin{tikzpicture}[scale=0.4,baseline=(current  bounding  box.center)]
\draw[invarrow=0.75] (0,4) -- (0,5);
\draw[invarrow=0.75] (1,4) -- (1,5);
\draw[invarrow=0.75] (2,4) -- (2,5);
\draw[invarrow=0.75] (4,4) -- (3,5);
\draw (0,2) -- (0,4);
\draw (1,3) -- (1,4);
\rcheck{3}{4}
\rcheck{2}{3}
\rcheck{4}{3}
\rcheck{3}{2}
\rcheck{1}{2}
\rcheck{0}{1}
\rcheck{2}{1}
\rcheck{1}{0}
\rcheck{5}{2}
\rcheck{4}{1}
\rcheck{6}{1}
\rcheck{5}{0}
\rcheck{3}{0}
\rcheck{2}{-1}
\rcheck{4}{-1}
\rcheck{3}{-2}
\rcheck{5}{-2}
\rcheck{4}{-3}
\rcheck{6}{-3}
\rcheck{5}{-4}
\draw (6,0) -- (6,-1);
\draw (7,1) -- (7,-2);
\draw (4-0.5,-4) -- (5+0.5,-4) -- (5+0.5,-5) -- (4-0.5,-5) -- (4-0.5,-4);
\node at (4.5,-4.5) {$\scriptstyle A$};
\draw (4,-3) -- (4,-4);
\draw (0,1) -- (0,-10.5);
\draw (1,0) -- (1,-10.5);
\draw (2,-1) -- (2,-10.5);
\draw (3,-2) -- (3,-10.5);
\draw (6,-5) -- (6,-4);
\draw (7,-6) -- (7,-3);
\rcheck{5}{-6}
\rcheck{4}{-7}
\rcheck{6}{-7}
\rcheck{5}{-8}
\node at (5.5,-5.5) {$\scriptstyle \bullet$};
\node at (4.5,-6.5) {$\scriptstyle \bullet$};
\node at (6.5,-6.5) {$\scriptstyle \bullet$};
\node at (5.5,-7.5) {$\scriptstyle \bullet$};
\draw (4,-6) -- (4,-5);
\draw (4,-8) -- (4,-7);
\draw [rounded corners=5pt] (7,2) -- (8,2) -- (8,-7) -- (7,-7);
\draw [rounded corners=5pt] (6,3) -- (9,3) -- (9,-8) -- (6,-8);
\draw [rounded corners=5pt] (5,4) -- (10,4) -- (10,-9.5) -- (5,-9.5) -- (5,-9);
\draw [rounded corners=5pt] (4,5) -- (11,5) -- (11,-10.5) -- (4,-10.5) -- (4,-9);
\draw (4-0.5,-8) -- (5+0.5,-8) -- (5+0.5,-9) -- (4-0.5,-9) -- (4-0.5,-8);
\node at (4.5,-8.5) {$\scriptstyle B$};
\end{tikzpicture}
~~~ \stackrel{\eqref{eq:Gamma_prime-graph},\eqref{eq:Psi_graph}}=
\Psi[\Gamma'_{B,A}]
\end{align*}
In the second equality we used the Yang--Baxter equation repeatedly. This concludes the proof of \eqref{eq:Gamma-Psi}. The proof of the second equation \eqref{eq:Gamma-Psi-prime} is analogous. The proof of \eqref{eq:Psi-inv} uses the same ideas as the proof of Lemma \ref{lem:traces}, we leave it to the interested reader. 
\end{proof}
\begin{cor}\label{cor:psi iso}
There exists an anti-isomorphism of shuffle algebras $\Psi : \CA' \xrightarrow{\sim} \CA^+$ given by:
\begin{align}\label{eq:psi-iso}
     X \mapsto \Psi[X]
\end{align}
with inverse $\Psi'$.
\end{cor}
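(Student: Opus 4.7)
The plan is to deduce the corollary formally from the three identities established in the preceding Proposition, namely the symmetry-preservation in Lemma \ref{lem:prop_psi}, the compatibility \eqref{eq:Gamma-Psi} between the $\Gamma^+$ and $\Gamma'$ constructions, and the inversion identity \eqref{eq:Psi-inv}. First I would verify the anti-homomorphism property. Starting from the definition of $*'$ in \eqref{eq:A*'B-Rcheck}, applying $\Psi$ term by term and pushing it past the factor $\check R_\sigma^{-1}(\cdot)\check R_\sigma$ via \eqref{eq:Psi-exchange} gives
\[
\Psi[A *' B] = \sum_{\sigma} \frac{1}{f_\sigma(z)}\,\check R_\sigma^{-1}\,\Psi\!\left[\Gamma'_{A,B}(z_\sigma)\right]\,\check R_\sigma.
\]
Applying \eqref{eq:Gamma-Psi} with $A$ and $B$ swapped, we obtain $\Psi[\Gamma'_{A,B}] = \Gamma^+_{\Psi[B],\Psi[A]}$, and comparison with \eqref{eq:A*B-Rcheck} then yields $\Psi[A *' B] = \Psi[B] * \Psi[A]$, which is the desired anti-multiplicativity. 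The identical argument with the roles of $\Psi$ and $\Psi'$, $\Gamma^+$ and $\Gamma'$, and $*$ and $*'$ interchanged, using \eqref{eq:Gamma-Psi-prime} and \eqref{eq:Psi_prime-exchange}, shows that $\Psi'$ is an anti-homomorphism $\CA^+ \to \CA'$.

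Next I would check that $\Psi$ actually lands in $\CA^+$ (and $\Psi'$ in $\CA'$). Symmetry of $\Psi[X]$ is precisely \eqref{eq:Psi-symmetric}. The pole conditions are immediate from the formula defining $\Psi$: the only factor in
\[
\Psi[X(z)] = \Tr_{k+1\ldots 2k}\!\left[\check R_{\omega^k}(qz,z)\, X_{k+1\ldots 2k}(z)\right]
\]
that can introduce poles in the external variables $z_1,\ldots,z_k$ is the product $\check R_{\omega^k}(qz,z) = \prod_{i,j}\check R_{k+j-i}(z_{k-i+1}/(qz_j))$, whose individual $\check R$-factors have at most simple poles, each located exactly at a hyperplane of the form $z_a = qz_b$ with $a\neq b$, as required by \eqref{eqn:pole condition}. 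For the wheel conditions, one specialises the external variables along the configuration \eqref{eqn:residue variables} and uses the Yang--Baxter equation to transport the crossings of $\check R_{\omega^k}(qz,z)$ into the canonical ``fusion'' form appearing on the right-hand side of \eqref{eqn:residue}, at which point the leftover tensor automatically provides the required $X^{(\lambda_1\ldots \lambda_u)}$. Here the crucial point is that the residues of the $\check R$-matrices at the wheel points factor exactly as in the shuffle-algebra calculation \eqref{eqn:residue (k)}, so the structure of the residue of $\Psi[X]$ is manifestly of the form demanded by Definition \ref{def:wheel}.

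Finally, \eqref{eq:Psi-inv} yields $\Psi\circ\Psi' = \Psi'\circ\Psi = \id$ on all of $\CV$, hence in particular on $\CA^+$ and $\CA'$. Combined with the anti-homomorphism property and the fact that both $\Psi$ and $\Psi'$ restrict to maps between $\CA'$ and $\CA^+$, this establishes that $\Psi$ is an anti-isomorphism of shuffle algebras with inverse $\Psi'$.

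The step I expect to be the main obstacle is the verification of the wheel conditions; the other two ingredients (anti-multiplicativity and invertibility) are direct formal consequences of the identities already proved. The wheel-condition check is essentially a Yang--Baxter computation, and while it is routine in spirit, one must carefully track how the specialisations \eqref{eqn:residue variables} interact with the twisted trace and with the cyclic repositioning of $\check R$-crossings inside $\check R_{\omega^k}(qz,z)$; this is best done diagrammatically, mirroring the manipulations in the proof of \eqref{eq:Gamma-Psi}.
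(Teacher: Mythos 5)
Your verification of anti-multiplicativity is exactly the paper's argument: apply $\Psi$ to \eqref{eq:A*'B-Rcheck}, push it past $\check R_\sigma^{-1}(\cdot)\check R_\sigma$ via \eqref{eq:Psi-exchange}, convert $\Psi[\Gamma'_{A,B}]$ to $\Gamma^+_{\Psi[B],\Psi[A]}$ via \eqref{eq:Gamma-Psi}, and compare with \eqref{eq:A*B-Rcheck}. The invertibility via \eqref{eq:Psi-inv} also matches. Where you diverge is the step showing that $\Psi$ actually maps $\CA'$ into $\CA^+$, and this is where your proposal has a genuine gap. Your claim that the pole conditions are ``immediate'' because the only source of poles in $z_1,\dots,z_k$ is the product $\check R_{\omega^k}(qz,z)$ is false: the traced tensor $X_{k+1\ldots 2k}(z_1,\dots,z_k)$ depends on the \emph{same} variables, and for $X \in \CA'$ its poles do not sit on the hyperplanes $z_i = z_j q$. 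Indeed, $\CA'$ is obtained from $\CA^-$ (whose pole condition uses the parameter $q^{-1}t^n$) by the involution $\Omega$, so a generic $X \in \CA'$ has poles at $z_i = z_j q t^{-n}$; for $\Psi[X]$ to satisfy \eqref{eqn:pole condition} these must cancel against the trace, which is not visible from the formula. Likewise, the wheel-condition verification you defer to a ``routine'' Yang--Baxter computation is precisely the hard part, and you correctly flag it as the main obstacle without resolving it.

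The paper avoids all of this with a short structural argument you should adopt: by Proposition \ref{prop:gen}, $\CA^+$ is generated by its one-variable piece $\text{End}(V)[z_1^{\pm 1}]$ (and the analogous statement holds for $\CA'$ after transporting through Definition \ref{def:negative shuffle} and $\Omega$). Since $\Psi$ manifestly sends $\text{End}(V)(z_1^{\pm 1})$ to itself --- and for $k=1$ the pole and wheel conditions are vacuous --- the already-established anti-homomorphism property forces $\Psi$ to carry products of generators of $\CA'$ to products of generators of $\CA^+$, hence $\Psi(\CA') \subseteq \CA^+$; the same argument for $\Psi'$ together with \eqref{eq:Psi-inv} gives bijectivity. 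This replaces your direct (and incomplete) check of symmetry, pole, and wheel conditions with a two-line consequence of the generation theorem.
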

\begin{proof}
Let $A(z_1\ldots z_l), B(z_1\ldots z_k)\in \CA'$. We apply $\Psi$ to the shuffle product $*'$:
\begin{multline}\label{eq:proof-psi-iso}
        \Psi[A*'B]=\sum_{\sigma\in \mathfrak S_{k+l}/\mathfrak S_l\times \mathfrak S_k}
    \Psi[\check R_\sigma^{-1}
    \Gamma'_{A,B}(z_\sigma) \check R_\sigma]
    =\sum_{\sigma\in \mathfrak S_{k+l}/\mathfrak S_l\times \mathfrak S_k}
    \check R_\sigma^{-1}\Psi[
    \Gamma'_{A,B}(z_\sigma) ]\check R_\sigma\\
    =\sum_{\sigma\in \mathfrak S_{k+l}/\mathfrak S_l\times \mathfrak S_k}
    \check R_\sigma^{-1}
    \Gamma^+_{\Psi[B],\Psi[A]}(z_\sigma)
    \check R_\sigma
    =\Psi[B]*\Psi[A]
\end{multline}
where we used \eqref{eq:A*'B-Rcheck}, \eqref{eq:Psi-exchange}, \eqref{eq:Gamma-Psi} and \eqref{eq:A*B-Rcheck}, respectively. This implies that $\Psi$ and $\Psi'$ give mutually inverse algebra anti-homomorphisms between $\CV$ and $\CV'$. The map $\Psi$ sends elements of $\text{End}(V)(z_1^{\pm 1})$ to elements of $\text{End}(V)(z_1^{\pm 1})$ and therefore by Proposition \ref{prop:gen} this implies that $\Psi$ sends $\CA'$ to $\CA^+$.
\end{proof}

\subsection{Shuffle algebras' anti-homomorphisms}\label{sec:Psi tilde}
In this section we introduce linear maps $\widetilde \Psi$ which arise from composing $\Psi$ with projections. These linear maps become anti-homomorphisms under certain conditions.

Recall the vector space $V \cong \mathbb C^{n+m}$ and consider two vectors spaces $V'\cong \mathbb C^{n'+m'}$ and $V''\cong \mathbb C^{n''+m''}$ such that $V$ and $V'$ are two coordinate subspaces of $V''$. Let $\mathcal{I}'$ and $\mathcal{I}''$ denote the label sets of the basis vectors of $V'$ and $V''$, respectively. Fix two projections $\pi$ and $\pi'$ and embeddings $\iota$ and $\iota'$:
\begin{align}\label{eq:proj-emb}
\pi:V''\to V, 
\qquad \pi':V''\to V',
\qquad
\iota:V\to V'', 
\qquad 
\iota':V'\to V''
\end{align}
such that $\pi$ removes $n''-n$ standard basis vectors with bosonic grading and $m''-m$ standard basis vectors with fermionic grading and similarly $\pi'$ removes $n''-n'$ standard basis vectors with bosonic grading and $m''-m'$ standard basis vectors with fermionic grading. The embeddings $\iota$ and $\iota'$ act as inverses on the corresponding subspaces. The maps $\pi,\pi',\iota$ and $\iota'$ can be reused to denote the maps of the labeling sets by replacing $V$ with $\mathcal{I}$ in \eqref{eq:proj-emb}. In particular, the maps $\pi(\mathcal{I}'')$ and $\pi'(\mathcal{I}'')$ are order preserving and we have $\iota(\mathcal{I})\cup \iota'(\mathcal{I}')\subseteq\mathcal{I}''$.

Let $R^{(V')}$ and $R^{(V'')}$ be the $R$-matrices of $U_t(\dot{\fgl}_{n'|m'})$ and $U_t(\dot{\fgl}_{n''|m''})$ acting on $V'\otimes V'$ and $V''\otimes V''$, respectively. Let $\epsilon'_i$, with $i\in\mathcal{I}'$, and $\epsilon_i''$, with $i\in \mathcal{I}''$, be the gradings of the standard basis vectors of $V'$ and $V''$, respectively. We can write:
\begin{align}
    \label{eq:projections}
    (\pi\otimes \pi)R^{(V'')}(z)(\iota\otimes \iota) = R(z),
    \qquad
    (\pi'\otimes \pi')R^{(V'')}(z)(\iota'\otimes \iota') = R^{(V')}(z)
\end{align}
where we require that $\epsilon''_{\iota(i)}=\epsilon_i$ and $\epsilon''_{\iota'(i)}=\epsilon'_i$.
The same relations hold for $\check R$. Let: 
\begin{equation}
\label{eqn:big space primed}
\CV' = \bigoplus_{k=0}^{\infty} \text{End}_{\BC(q,t)}(V'^{\otimes k}) (z_1 \ldots z_k)
,
\qquad
\CV'' = \bigoplus_{k=0}^{\infty} \text{End}_{\BC(q,t)}(V''^{\otimes k}) (z_1 \ldots z_k)
\end{equation}
Consider two tensors $A = A_{1\ldots k}(z_1\ldots z_k)\in \CV''$ and $B = B_{1\ldots l}(z_1\ldots z_l)\in \CV''$ which preserve the coordinate subspaces $V^{\otimes k}$ and $V^{\otimes l}$, respectively. Using \eqref{eq:projections} one can show that:
\begin{align}
    \label{eq:proj hom}
    \pi^{\otimes (k+l)}\left( A*''B \right)\iota^{\otimes (k+l)}=
    \left(\pi^{\otimes k} A \iota^{\otimes k}\right) *\left(\pi^{\otimes l} B \iota^{\otimes l}\right)
\end{align}
where $*''$ denotes the shuffle product \eqref{eqn:shuffle product} with $R$ replaced by $R^{(V'')}$. Let us stress that the space $V''$ is used as an auxiliary space and the gradation of the standard basis vectors of $V''$ does not need to be ordered as $\{1\ldots 1,-1\ldots -1\}$. 
\begin{defn}\label{defn:tilde Psi}
For $X(z)=X_{1\ldots k}(z_1\ldots z_k)\in \CV'$ we define the map:
\begin{align}\label{eq:Psi tilde}
    \widetilde \Psi\left [ X(z)\right]=  \pi^{\otimes k}\left(
    \Tr_{k+1\ldots 2k}
\left[
\left(\check{R}^{(V'')}_{\omega^k}(q z,z)\right)
\left( \id_{V''}^{\otimes k} \otimes \left(
\iota'^{\otimes k} X(z)\pi'^{\otimes k}\right)\right)
\right]\right)\iota^{\otimes k}
\end{align}
where the traces are taken over $k$ copies of $V''$.
\end{defn}
In \eqref{eq:Psi tilde}  the non-trivial contribution from the trace only comes from the subspaces $V'$. An equivalent definition can be given by writing:
\begin{align}\label{eq:Psi tilde x}
    \widetilde \Psi\left [ X(z)\right]: =  \Tr_{k+1\ldots 2k}
\left[
\left(\pi^{\otimes k}\otimes \pi'^{\otimes k}\right)\left(\check{R}^{(V'')}_{\omega^k}(q z,z)\right)
\left(\iota^{\otimes k}\otimes \iota'^{\otimes k}\right)
X_{k+1\ldots 2k}(z)
\right]\in \CV
\end{align}
where the traces are taken over $k$ copies of  $V'$. The graphical formula of \eqref{eq:Psi tilde} is:
\begin{align}\label{eq:Psi a graphics}
\widetilde\Psi\left [ X(z)\right]
=
\sum_{\alpha,\beta\in \iota(\mathcal{I})^k}
\ket{\pi(\alpha)}\bra{\pi(\beta)}
\sum_{\gamma,\delta\in \iota'(\mathcal{I}')^k}
\begin{tikzpicture}[scale=0.5,baseline=(current  bounding  box.center)]
\rcheck{3}{4}
\rcheck{2}{3}
\rcheck{4}{3}
\rcheck{3}{2}
\rcheck{1}{2}
\rcheck{0}{1}
\rcheck{2}{1}
\rcheck{1}{0}
\rcheck{5}{2}
\rcheck{4}{1}
\rcheck{6}{1}
\rcheck{5}{0}
\rcheck{3}{0}
\rcheck{2}{-1}
\rcheck{4}{-1}
\rcheck{3}{-2}
\draw[arrow=0.166] (0,5) -- (0,2);
\draw[arrow=0.25] (1,5) -- (1,3);
\draw[arrow=0.5] (2,5) -- (2,4);
\draw[arrow=0.3] (3,5) -- (4,4);
\draw[arrow=0.834] (0,1) -- (0,-2);
\draw[arrow=0.75] (1,0) -- (1,-2);
\draw[arrow=0.5] (2,-1) -- (2,-2);
\draw[invarrow=0.3] (3,-2) -- (4,-1);
\node[below] at (0,-2) {$\scriptstyle{\alpha_1}$};
\node[below] at (1,-2) {$\scriptstyle{\cdots}$};
\node[below] at (2,-2) {$\scriptstyle{\cdots}$};
\node[below] at (3,-2) {$\scriptstyle{\alpha_k}$};
\node[above] at (0,5) {$\scriptstyle{\beta_1}$};
\node[above] at (1,5) {$\scriptstyle{\cdots}$};
\node[above] at (2,5) {$\scriptstyle{\cdots}$};
\node[above] at (3,5) {$\scriptstyle{\beta_k}$};
\node[below] at (0,-2-0.6) {$\scriptstyle{z_1}$};
\node[below] at (1,-2-0.6) {$\scriptstyle{\cdots}$};
\node[below] at (2,-2-0.6) {$\scriptstyle{\cdots}$};
\node[below] at (3,-2-0.6) {$\scriptstyle{z_k}$};
\node[above] at (0,5+0.6) {$\scriptstyle{q z_1}$};
\node[above] at (1,5+0.6) {$\scriptstyle{\cdots}$};
\node[above] at (2,5+0.6) {$\scriptstyle{\cdots}$};
\node[above] at (3,5+0.6) {$\scriptstyle{q z_k}$};
\draw (8,1) -- (12,1) -- (12,2) -- (8,2) -- (8,1);
\node at (10,1.5) {$\scriptstyle \iota'^{\otimes k} X(z)\pi'^{\otimes k}$};
\draw (8.5,2) -- (8.5,2.5) node[above] {$\scriptstyle \gamma_1$};
\draw (9.5,2) -- (9.5,2.5) node[above] {$\scriptstyle \ldots$};
\draw (10.5,2) -- (10.5,2.5) node[above] {$\scriptstyle \ldots$};
\draw (11.5,2) -- (11.5,2.5) node[above] {$\scriptstyle \gamma_k$};
\draw (8.5,1) -- (8.5,0.5) node[below] {$\scriptstyle \delta_1$};
\draw (9.5,1) -- (9.5,0.5) node[below] {$\scriptstyle \ldots$};
\draw (10.5,1) -- (10.5,0.5) node[below] {$\scriptstyle \ldots$};
\draw (11.5,1) -- (11.5,0.5) node[below] {$\scriptstyle \delta_k$};
\node[above, right] at (3.5, 5.5) {$\scriptstyle \gamma_k$};
\node[above, right] at (4.5, 4.5) {$\scriptstyle \ldots$};
\node[above, right] at (5.5, 3.5) {$\scriptstyle \ldots$};
\node[above, right] at (6.5, 2.5) {$\scriptstyle \gamma_1$};
\node[below, right] at (6.5, 0.5) {$\scriptstyle \delta_1$};
\node[below, right] at (5.5, -0.5) {$\scriptstyle \ldots$};
\node[below, right] at (4.5, -1.5) {$\scriptstyle \ldots$};
\node[below, right] at (3.5, -2.5) {$\scriptstyle \delta_k$};
\end{tikzpicture}
\end{align}
where each cross is interpreted as $\check R^{(V'')}(z_i/(qz_j))$. In this diagram we removed the trace lines and instead wrote the summations explicitly in order to emphasize that the trace is taken over the non-trivial subspace of $\iota'(V')$. When we choose projections in \eqref{eq:proj-emb}, we will always require that $\iota(\mathcal{I})\cup \iota'(\mathcal{I}')=\mathcal{I}''$, otherwise the space $V''$ will contain extra dimensions which will play no role in $\widetilde\Psi\left [ X(z)\right]$.

\begin{rmk}\label{rmk:Psi pi map}
Consider the map $\widetilde \Psi$ from Definition \ref{defn:tilde Psi} applied to $X\in\CV''$ (i.e. take $V'= V''$ in \eqref{eq:proj-emb}). Let $\CA'$ be the shuffle algebra from Definition \ref{def:A prime} (but associated to $\fgl_{n''|m''}$) and $\CA^+$ be the shuffle algebra from Definition \ref{def:wheel} (associated to $\fgl_{n|m}$). There exists an anti-homomorphism of shuffle algebras $\CA' \xrightarrow{\sim} \CA^+$ given  by:
\begin{align}\label{eq:psi a hom}
     X \mapsto \widetilde\Psi[X]
\end{align}
\end{rmk}
The statement holds because under the conditions of the Remark the map
\begin{align}\label{eq:Psi tilde hom}
    \widetilde \Psi\left [ X(z)\right]=  \pi^{\otimes k}\left(
    \Tr_{k+1\ldots 2k}
\left[
\left(\check{R}^{(V'')}_{\omega^k}(q z,z)\right)
\left( \id_{V''}^{\otimes k} \otimes 
X(z)\right)
\right]\right)\iota^{\otimes k}
\end{align}
is a composition of the anti-isomorphism $\Psi$ (acting in $\CV''$) and the projection which is a homomorphism due to \eqref{eq:proj hom}. However, if we take $\CV'$ different than $\CV''$ in Remark \ref{rmk:Psi pi map} then $\widetilde\Psi$ fails to be an anti-homomorphism. This happens because the inner part of the map $\widetilde \Psi$ in \eqref{eq:Psi tilde} is given by $\iota'^{\otimes^k}\bullet \pi'^{\otimes^k}$ which does not respect the shuffle algebras' multiplication. As will be explained in the next section there exist exceptions $X,Y\in \CV'$ for which:
\[
\widetilde \Psi[X*'Y] =\widetilde \Psi[X]*\widetilde \Psi[Y]
\]


\section{The commutative subalgebra}\label{sec:Commutative}

This section is devoted to the commutative subalgebra of the shuffle algebra $\CA^+$. Our main goal is to derive trace formulas for several families of commuting elements using the anti-isomorphism $\Psi$ and the map $\widetilde \Psi$. As a consequence we prove Theorem \ref{thm:Z_intro} and equation \eqref{eq:S-trace intro} from the Introduction.

\subsection{Slope 0}
\label{sub:slope 0}

The shuffle algebra $\CA^+$ comes endowed with two gradings, which we will refer to as vertical and horizontal. In order to define them, we recall the fact that any element of $\CA^+$ is a linear combination of tensors:
\begin{equation}
\label{eqn:a tensor}
f(z_1\ldots z_k) E_{i_1j_1} \otimes \dots \otimes E_{i_kj_k}
\end{equation}
for various rational functions $f$ and indices $i_1\ldots i_k,j_1\ldots j_k \in \{1\ldots n\}$. We then define the \emph{vertical} and \emph{horizontal} gradings by the following formulas:
\begin{align}
&\vdeg X = k \in \BN \label{eqn:vertical} \\
&\hdeg X = (\text{hom deg }f)\bde - \sum_{s=1}^k [i_s;j_s) \label{eqn:horizontal} \in \zz 
\end{align}
where hom deg denotes the homogeneous degree of a rational function, $\bde = (1\ldots 1) \in \zz$, and
$$
[i;j) = \bs^i+\dots+\bs^{j-1} \in \zz
$$
with $\bs^k$ being the $n$-tuple of integers with a single 1 on position $\equiv k$ modulo $n$, and 0 everywhere else. While the definition above makes sense only for $i \leq j$, we extend it to all integers $i$ and $j$ by setting $[i;j) + [j;i) = 0$. We will write
$$
\CA^+ = \bigoplus_{k=0}^{\infty} \CA_k, \qquad \CA_k = \bigoplus_{\bd \in \zz} \CA_{\bd,k}
$$
for the graded pieces of the shuffle algebra, and we henceforth focus on the $\bd = 0$ direct summand.

\begin{defn}
\label{def:slope}

An element of $\CA_{0,k}$ is said to have slope 0 if it is a linear combination of tensors of the form \eqref{eqn:a tensor} such that
\begin{equation}
\label{eqn:inequality}
\deg_{\xi} f(z_1\ldots z_{\ell}, \xi z_{\ell+1}, \dots, \xi z_k) \cdot n - \sum_{a=\ell+1}^k (j_a-i_a) \leq 0
\end{equation}
for all $0 \leq \ell \leq k$, where $\deg_{\xi}$ refers to the order of a rational function in $\xi$ as $\xi \rightarrow \infty$. 
    
\end{defn}

Let $\CB_{k} \subset \CA_{0,k}$ be the subspace of tensors satisfying the property in Definition \ref{def:slope}. For all $0 \leq \ell \leq k$, we may define the operation
\begin{equation}
\label{eqn:lead}
\text{lead}_{\ell} : \CB_{k} \rightarrow \CB_{\ell} \otimes \CB_{k-\ell}
\end{equation}
by the following procedure: first write any element $\CB_{k}$ as a linear combination of tensors \eqref{eqn:a tensor}. Then change the variables of $f$ according to $z_{\ell+1}\mapsto \xi z_{\ell+1}, \dots, z_k \mapsto \xi z_k$ and retain only the coefficient of
$$
\xi^{\frac 1n\sum_{a=l+1}^k (j_a-i_a)}
$$
i.e. the greatest possible power of $\xi$ allowed by inequality \eqref{eqn:inequality}. The resulting expression in variables $z_1\ldots z_k$ can be interpreted as an element of $\CB_{\ell} \otimes \CB_{k-\ell}$, where the variables $z_1\ldots z_{\ell}$ (respectively $z_{\ell+1}\ldots z_k$) correspond to the first (respectively second) tensor factor. 

It was shown in \cite[Proposition 5.8]{Ng_tale} that 
\begin{equation}
\label{eqn:slope subalgebra}
\CB^+ = \bigoplus_{k=0}^{\infty} \CB_{k}
\end{equation}
is preserved by the shuffle product $*$. The following result shows that $\CB^+$ is commutative.

\begin{prop}
\label{prop:slope}

(\!\!\cite[Proposition 5.23]{Ng_tale}) There exists an algebra isomorphism 
\begin{equation}
\label{eqn:upsilon}
\Xi : \Lambda^{\otimes n} \xrightarrow{\sim} \CB^+
\end{equation}
where $\Lambda = \BC(q,t)[p_1,p_2,\dots]$ is the ring of symmetric polynomials in infinitely many variables.

\end{prop}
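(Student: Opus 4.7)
The plan is to build $\Xi$ on power sum generators and then verify it is an isomorphism in three steps. For each $i \in \{1,\ldots,n\}$ and $k \geq 1$, define candidate elements $\Xi(p_k^{(i)}) := S_k^{(i)} \in \CA_{0,k}$ via the commutator recursion \eqref{eqn:power comm introduction} with the diagonal base case $S_1^{(i)}$. The first step is to verify that each $S_k^{(i)}$ lies in $\CB_k$, i.e.\ that it satisfies the slope 0 inequality \eqref{eqn:inequality}; the diagonal structure of $S_1^{(i)}$, propagated through the shuffle commutator, keeps the horizontal-degree bound tight, and moreover $S_k^{(i)} \in \CA_{0,k}$ by construction because the recursion does not change the horizontal degree.

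The crucial step is to prove pairwise commutativity $S_k^{(i)} * S_l^{(j)} = S_l^{(j)} * S_k^{(i)}$ for all $i,j,k,l$, so that $\Xi$ extends to a well-defined algebra homomorphism $\Lambda^{\otimes n} \to \CB^+$. Within the framework developed in the present paper the most natural route is via the anti-isomorphism $\Psi \colon \CA' \xrightarrow{\sim} \CA^+$ of Corollary \ref{cor:psi iso}: one exhibits each $S_k^{(i)}$ as $\Psi[X_k^{(i)}]$ for an explicit diagonal tensor $X_k^{(i)} \in \CA'$ of the form considered around \eqref{eq:X-lambda}, whose mutual commutativity is transparent from the cyclicity of trace and the fact that diagonal matrix units mutually commute. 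Alternatively, one can transfer to the quantum toroidal side via the isomorphism $\Phi$ of Theorem \ref{thm:iso} and identify the $S_k^{(i)}$ with a commuting family inside a horizontal Heisenberg subalgebra of $\UU$, where commutativity is classical.

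For bijectivity of $\Xi$ I would use the lead map \eqref{eqn:lead} as a surrogate for a coproduct on $\CB^+$. Checking that $\text{lead}_\ell(S_k^{(i)}) = S_k^{(i)} \otimes 1 + 1 \otimes S_k^{(i)}$, so that the $S_k^{(i)}$ are ``primitive'', reduces the problem via an induction on $k$ to identifying, in each graded piece $\CB_k$, the space of primitives with the $\BC(q,t)$-span of $S_k^{(1)},\ldots,S_k^{(n)}$. Combined with commutativity this forces $\CB^+$ to be isomorphic to $\text{Sym}\bigl(\bigoplus_{i,k} \BC(q,t) \cdot S_k^{(i)}\bigr)$, and hence to $\Lambda^{\otimes n}$. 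The main obstacle is the commutativity step: although conceptually clean through $\Psi$ or $\Phi$, it relies on a careful identification of the $S_k^{(i)}$ as the images of the correct diagonal (or Heisenberg) elements on the other side of the relevant isomorphism, and on checking that the recursion \eqref{eqn:power comm introduction} produces precisely those elements. The remaining verifications, primitivity under $\text{lead}_\ell$ and the graded dimension count, are combinatorially involved but essentially routine.
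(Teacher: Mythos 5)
The paper does not prove this proposition at all: it is imported verbatim from \cite[Proposition 5.23]{Ng\_tale}, where $\Xi$ is constructed by putting bialgebra structures on (enlargements of) $\Lambda^{\otimes n}$ and $\CB^+$ and matching complete symmetric functions, and where the hard input is the structure theory of slope subalgebras developed there. Your proposal is therefore not a variant of the paper's argument but an attempt at an independent proof, and as such it has a genuine circularity problem within this paper's logic. The elements $S_k^{(i)}$, the fact that the recursion \eqref{eqn:power comm} is consistent, and the fact that the $S_k^{(i)}$ lie in $\CB_k$ and commute are all established in Proposition \ref{prop:comm} \emph{using} Proposition \ref{prop:slope}: the argument there starts from the already-existing power-sum generators $P_k^{(i)}$ (whose existence and normalization via \eqref{eqn:property 1}--\eqref{eqn:property 2} presuppose the isomorphism $\Xi$), passes through $\Upsilon\circ\Phi^{-1}$ to $\CS$, and invokes \cite[formulas (3.23) and (3.105)]{Ng\_pbw}. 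Similarly, your $\Psi$-based commutativity route leans on Lemma \ref{lem:H shuffle product} and Proposition \ref{prop:exp}, both of whose proofs invoke Proposition \ref{prop:slope}. So you would be assuming the statement to prove it.

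Two further concrete gaps. First, the identification $S_k^{(i)}=\Psi[X_k^{(i)}]$ for a single diagonal tensor $X_k^{(i)}$ is not what happens: under $\Psi$ the diagonal tensors $H'^{(j)}_k$ map to coefficients of \emph{exponentials} in the $S_k^{(i)}$ (Theorem \ref{thm:Psi H}, Remark \ref{rmk:Psi H in S}), and extracting an individual $S_k^{(i)}$ requires the auxiliary-space map $\widetilde\Psi$ with a $\fgl_{1|1}$ insertion and the derivative trick \eqref{eq:S-G}; none of this is available before $\CB^+\cong\Lambda^{\otimes n}$ is known. Second, the surjectivity step --- that the space of ``primitive'' elements of $\CB_k$ (those for which \eqref{eqn:inequality} is strict for all intermediate $\ell$) is exactly $n$-dimensional, and that these generate all of $\CB_k$ --- is precisely the hard content of \cite[Proposition 5.23]{Ng\_tale}; it requires the wheel conditions and the full slope-subalgebra analysis, and cannot be dismissed as ``combinatorially involved but essentially routine.'' If you want a self-contained proof you must either reproduce that analysis or cite it, in which case the proposition follows with no further work.
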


\subsection{Explicitly unraveling $\Xi$}

The isomorphism \eqref{eqn:upsilon} was defined in \cite{Ng_tale} by placing bialgebra structures on (enlargements of) both $\Lambda^{\otimes n}$ and $\CB^+$ and then constructing explicit elements of the latter algebra which correspond under $\Xi$ to complete symmetric functions. However, it is more natural to use power sum functions instead. To this end, consider the element
\begin{equation}
\label{eqn:power sum}
P_k^{(i)} \in \CB_{k}
\end{equation}
that corresponds to the power-sum function $p_k$ in the $i$-th tensor factor under the isomorphism $\Xi$. Explicitly, the way these elements were constructed in \cite{Ng_tale} is by requiring that
\begin{equation}
\label{eqn:property 1}
P_k^{(i)} \text{ has the property that the inequality \eqref{eqn:inequality} is strict}, \ \forall \ell \in \{1\ldots k-1\}
\end{equation}
The vector space of elements of $\CB_{k}$ with the property in \eqref{eqn:property 1} is $n$-dimensional, and thus one has the freedom to transform the vector 
$$
\left( P_k^{(1)} \dots P_k^{(n)} \right) \in \CB_{k}^{\oplus n}
$$
since it is only determined up to a linear transformation in $GL_n$. To completely determine the vector above, we consider the following. 

\begin{defn}
\label{def:evaluation}

For any $i \in \{1\ldots n\}$, consider the linear map
$$
\alpha_i : \CB^+ \rightarrow \BC(q,t)
$$
$$
\alpha_i(X) = \Big( \text{coefficient of } E_{ii} \text{ in } X^{(k)}(y) \text{ of \eqref{eqn:residue (k)}} \Big)
$$
for all $X \in \CB_{k}$ (note that $\alpha_i(X) = \alpha_{[i;i)}(X) \cdot (1-t^{-2})^k q^{\frac {k(2i-1)}n}$ in the notation of \cite{Ng_tale}).

\end{defn}

As shown in \cite[Corollary 5.13]{Ng_tale}, the map $\alpha_i$ is an algebra homomorphism for all $i \in \{1\ldots n\}$. We completely determine the elements \eqref{eqn:power sum} by property \eqref{eqn:property 1} together with the following normalization
\begin{equation}
\label{eqn:property 2}
\alpha_j \left( P_k^{(i)} \right) = \delta_{ij}, \quad \forall  i,j \in \{1,\dots,n\}
\end{equation}
The following result is key to constructing interesting elements in the commutative subalgebra $\CB^+$.

\begin{prop}
\label{prop:exp}

Suppose we have elements $\{H_k \in \CB_{k}\}_{k \geq 0}$ such that $H_0 = 1$ and
$$
\emph{lead}_{\ell} \left( H_k  \right) = H_{\ell}\otimes H_{k-\ell}
$$
(recall the notation \eqref{eqn:lead}) for all $\ell \in \{0\ldots k\}$. Then we have the following equality of generating series
\begin{equation}
\label{eqn:exponential}
\sum_{k=0}^{\infty} H_{k} x^k = \exp \left( \sum_{k=1}^{\infty} \sum_{i =1}^n \frac {\gamma^k_{(i)}}k \cdot P_k^{(i)}x^k \right)
\end{equation}
for some constants $\{\gamma^k_{(i)}\}_{k \in \BN, i \in \{1\ldots n\}} \in \BC(q,t)$.

\end{prop}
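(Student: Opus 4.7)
The plan is to transport the problem via the isomorphism $\Xi$ of Proposition \ref{prop:slope} to the symmetric function side and apply a classical Milnor--Moore type argument. The standard Hopf algebra structure on $\Lambda^{\otimes n}$ has the property that, in the $i$-th tensor factor, each power sum $p_k$ is primitive: $\Delta(p_k) = p_k \otimes 1 + 1 \otimes p_k$. Under $\Xi$, these correspond (by the characterization \eqref{eqn:property 1}--\eqref{eqn:property 2}) to the $P_k^{(i)}$. The first step is to identify the operations $\text{lead}_\ell$ of \eqref{eqn:lead} with the graded components of a coproduct $\Delta : \CB^+ \to \CB^+ \otimes \CB^+$ matching the standard one on $\Lambda^{\otimes n}$. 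Concretely, I would verify: (i) $\text{lead} := \bigoplus_\ell \text{lead}_\ell$ is an algebra homomorphism from $(\CB^+,*)$ to $(\CB^+ \otimes \CB^+, *\otimes *)$; and (ii) primitivity $\text{lead}_\ell(X) = 0$ for $0 < \ell < k$ on $X \in \CB_k$ is precisely the strict-inequality property \eqref{eqn:property 1}. Part (ii) is immediate from Definition \ref{def:slope}, since strict inequality in \eqref{eqn:inequality} means that the extracted top coefficient in $\xi$ vanishes, and so $P_k^{(i)}$ is primitive.

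Given these identifications, the hypothesis $\text{lead}_\ell(H_k) = H_\ell \otimes H_{k-\ell}$ states exactly that the generating series $H(x) := \sum_{k \geq 0} H_k x^k$ is a group-like element in the $x$-adic completion of $\CB^+$, i.e. $\Delta H(x) = H(x) \otimes H(x)$. By the classical theorem on connected graded commutative Hopf algebras in characteristic zero, such a group-like series is uniquely the exponential of a primitive series: $H(x) = \exp(\Pi(x))$ with $\Pi(x) = \sum_{k \geq 1} \Pi_k x^k$ and each $\Pi_k \in \CB_k$ primitive. Since the primitive part of $\CB_k$ is the $n$-dimensional subspace spanned by $\{P_k^{(i)}\}_{i=1}^n$ (using that on the $\Lambda^{\otimes n}$ side the degree-$k$ primitives form $\bigoplus_i \BC \cdot p_k^{(i)}$), each $\Pi_k$ admits a unique expansion $\Pi_k = \sum_{i=1}^n \frac{\gamma_{(i)}^k}{k} P_k^{(i)}$ for constants $\gamma_{(i)}^k \in \BC(q,t)$, yielding formula \eqref{eqn:exponential}.

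The main obstacle is step (i): verifying that $\text{lead}$ assembles into a genuine coproduct compatible with the shuffle product. This is a direct but somewhat intricate combinatorial check at the level of the rational functions and $R$-matrix factors in \eqref{eqn:shuffle product}, in the spirit of the proof of \cite[Proposition 5.8]{Ng_tale} that $\CB^+$ is preserved by $*$; one sends $z_{\ell+1},\ldots,z_k \mapsto \xi z_{\ell+1},\ldots,\xi z_k$ in $X*Y$, tracks which shuffle orderings and which $R$-matrix factors contribute the top power of $\xi$ as $\xi \to \infty$, and sees that only products of pure ``$X$-shuffles'' on $z_1,\ldots,z_\ell$ with pure ``$Y$-shuffles'' on $z_{\ell+1},\ldots,z_k$ survive, matching $\text{lead}_a(X)*\text{lead}_{\ell-a}(Y)$ tensored appropriately. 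If this verification proves unwieldy, an alternative is to proceed by induction on $k$ without articulating the full Hopf structure: assuming $H_0, \ldots, H_{k-1}$ have been matched to the exponential form with constants $\gamma_{(i)}^a$ for $a < k$, set $H_k'$ to be the $x^k$ coefficient of $\exp\bigl(\sum_{a < k,\, i} \tfrac{\gamma_{(i)}^a}{a} P_a^{(i)} x^a\bigr)$. Using the multiplicativity of $\text{lead}_\ell$ restricted to the subalgebra generated by $\{H_j,P_a^{(i)}\}_{j,a<k}$ (a much weaker property than full coassociativity), one checks $\text{lead}_\ell(H_k - H_k') = 0$ for $0 < \ell < k$. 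Thus $H_k - H_k'$ is primitive in $\CB_k$ and so equals $\sum_i \frac{\gamma_{(i)}^k}{k} P_k^{(i)}$ for new constants $\gamma_{(i)}^k$, closing the induction.
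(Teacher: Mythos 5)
Your proposal is correct and follows essentially the same route as the paper: the paper's proof is precisely the one-line reduction via $\Xi$ to $\Lambda^{\otimes n}$, where $\sum_{\ell}\text{lead}_{\ell}$ becomes the Hall coproduct and the statement (group-like series are exponentials of primitive series, with degree-$k$ primitives spanned by the power sums) is classical. The compatibility of $\text{lead}$ with the shuffle product that you flag as the main obstacle is not re-proved in the paper but cited from \cite{Ng_tale}, where it was established alongside \cite[Proposition 5.8]{Ng_tale}.
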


Proposition \ref{prop:slope} allows us to reduce Proposition \ref{prop:exp} to the analogous statement for $\Lambda^{\otimes n}$ (with the analogue of $\sum_{\ell=0}^k \text{lead}_{\ell}$ being the Hall coproduct on $\Lambda$) in which case the result is well known. Moreover, because the maps $\alpha_i$ are algebra homomorphisms, the constants $\gamma^k_{(i)}$ can be determined from the equalities
\begin{equation}
\label{eqn:exponential evaluation}
\sum_{k=0}^{\infty} \alpha_i(H_{k}) x^k = \exp \left( \sum_{k=1}^{\infty}  \frac {\gamma^k_{(i)}}k  x^k \right), \quad \forall i \in \{1\ldots n\}
\end{equation}
We make the convention that $\alpha_i(H_0) = 1$ for all $i$.

\subsection{Elements of $\CB^+$ part I}\label{sec:basic elements} In the present paper, a key role is played by the following elements:
\begin{equation}
\label{eqn:def h}
H_k^{(i)} = \underbrace{E_{ii} \otimes \dots \otimes E_{ii}}_{k\text{ factors}} \in \CV
\end{equation}
Define their generating series:
\begin{align}\label{eq:H-generating}
    H^{(i)}(x) := \sum_{k=0}^{\infty} H_{k}^{(i)} x^k 
\end{align}

\begin{prop}
\label{prop:computation}

We have the following equality of generating series:
\begin{equation}
\label{eqn:H generator}
H^{(i)}(x) = \exp \left( \sum_{k=1}^{\infty} \frac {P_k^{(i)} (-1)^{k-1}}k x^k \right)
\end{equation}
for all $i \in \{1,\dots,n\}$.

\end{prop}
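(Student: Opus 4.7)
The plan is to apply Proposition~\ref{prop:exp} to the family $\{H_k^{(i)}\}_{k \geq 0}$, then pin down the resulting constants via the algebra homomorphisms $\alpha_j$ of Definition~\ref{def:evaluation}. First, I would verify the hypotheses of Proposition~\ref{prop:exp} for $H_k^{(i)} = E_{ii}^{\otimes k}$. Since $E_{ii}^{\otimes k}$ projects onto $\BC \cdot e_i^{\otimes k}$, on which every $\check R_\sigma$ acts by a scalar, the symmetric condition \eqref{eqn:symmetric tensor} holds directly. Since $H_k^{(i)}$ has no $z$-dependence, the pole condition and all wheel conditions are satisfied trivially (with every $X^{(\lambda)} \equiv 0$ for $\lambda$ of length smaller than $k$), and the slope~0 inequality \eqref{eqn:inequality} holds with equality for every $\ell$. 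The same lack of $z$-dependence makes the rescaling $z_a \mapsto \xi z_a$ for $a > \ell$ vacuous, so the coefficient of $\xi^0$ in $H_k^{(i)}$ is itself, giving $\text{lead}_\ell(H_k^{(i)}) = H_\ell^{(i)} \otimes H_{k-\ell}^{(i)}$.

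Invoking Proposition~\ref{prop:exp} then produces constants $\gamma_{(j)}^k \in \BC(q,t)$ such that
$$
H^{(i)}(x) = \exp \biggl( \sum_{k \geq 1} \sum_{j=1}^{n} \frac{\gamma_{(j)}^k}{k}\, P_k^{(j)} x^k \biggr),
$$
and these constants are determined by~\eqref{eqn:exponential evaluation}. The key step is the computation of $\alpha_j(H_k^{(i)})$. For $k = 1$, the wheel ``residue'' degenerates to plain evaluation at $z_1 = y$, so $\alpha_j(H_1^{(i)}) = \delta_{ij}$. For $k \geq 2$, the tensor $H_k^{(i)}$ has no pole at the wheel point $z_s = q^{s-1} y$, so the left-hand side of \eqref{eqn:residue (k)} vanishes. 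Since each $\check R(q^{-j})$ is invertible for $1 \leq j \leq k-1$ (by the unitarity relation \eqref{eq:unitarity} together with the non-vanishing of $f(q^{-j})$ for generic $q,t$), the product $\check R_1(q^{-1}) \cdots \check R_{k-1}(q^{-k+1})$ is invertible, and one deduces $X^{(k)}(y) = 0$, hence $\alpha_j(H_k^{(i)}) = 0$.

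Substituting these values into \eqref{eqn:exponential evaluation} yields the identity $1 + \delta_{ij} x = \exp\bigl( \sum_{k \geq 1} \gamma_{(j)}^k x^k / k \bigr)$, whose logarithm gives $\gamma_{(j)}^k = (-1)^{k-1} \delta_{ij}$. Substituting back then produces exactly \eqref{eqn:H generator}. The only step requiring genuine care is the invertibility of the $\check R$-matrices at the wheel arguments, but this reduces to generic non-vanishing of the scalar $f$ and is standard; everything else is a direct unpacking of Proposition~\ref{prop:exp} and its surrounding apparatus.
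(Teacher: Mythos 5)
Your proposal is correct and follows essentially the same route as the paper: verify that $H_k^{(i)}=E_{ii}^{\otimes k}$ lies in $\CB^+$ and satisfies the hypotheses of Proposition \ref{prop:exp}, then pin down the constants $\gamma^k_{(j)}$ by computing $\alpha_j(H_k^{(i)})=\delta_{k0}+\delta_{k1}\delta_{ij}$ and taking a logarithm. The only difference is that you spell out a couple of details the paper leaves implicit (the invertibility of the $\check R$-matrices at the wheel specialization, needed to conclude $X^{(k)}=0$ from the vanishing residue), which is a harmless refinement rather than a different argument.
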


\begin{proof} Since it is easy to show that $H_k^{(i)}$ commutes with $\check R_{i,j}$ for all $1 \leq i\neq j \leq k$, then $H_k^{(i)} \in \CV_{\text{sym}}$. Since $E_{ii} \otimes \dots \otimes E_{ii}$ has no denominators, its residue at $z_i = z_jq$ is equal to 0 for all $k \geq 2$ for trivial reasons, so we have
$$
H_k^{(i)} \in \CA^+
$$
Clearly, $H_k^{(i)}$ has vertical degree $k$ and horizontal degree 0. Moreover, it has slope 0 because the left-hand side of \eqref{eqn:inequality} for $E_{ii} \otimes \dots \otimes E_{ii}$ is equal to 0 for all $ \ell \in \{0\ldots k\}$. This implies not only that
$$
H_k^{(i)} \in \CB^+
$$
but that the elements $\{H_k^{(i)}\}_{k \geq 0}$ satisfy the assumptions of Proposition \ref{prop:exp}. Therefore, we have a power series equality of the form \eqref{eqn:exponential}, and it remains to compute its coefficients. Because 
$$
\left(H_1^{(i)}\right)^{(1)} = E_{ii}, \qquad \left(H_k^{(i)}\right)^{(k)} = 0, \ \forall  k \geq 2
$$
(the latter property follows because the residues \eqref{eqn:residue} are 0 for $E_{ii} \otimes \dots \otimes E_{ii}$) then we have 
$$
\alpha_j(H_k^{(i)}) = \delta_{k0} + \delta_{k1}\delta_{ij}
$$
Plugging this into formula \eqref{eqn:exponential evaluation} yields
$$
1+x \delta_{ij} = \exp \left( \sum_{k=1}^{\infty}  \frac {\gamma^k_{(j)}}k x^k \right), \quad \forall j \in \{1,\dots,n\}
$$
Taking the logarithm implies that $\gamma^k_{(j)} = \delta_{ij} (-1)^{k-1}$, which implies formula \eqref{eqn:H generator}.

\end{proof}
Let $\lambda=(\lambda_1 \ldots \lambda_N)\in \{1\ldots n\}^N$ be any sequence and $m(\lambda)=(m_1(\lambda)\ldots m_n(\lambda))$ be the multiplicity vector of $\lambda$, i.e. $m_i(\lambda)$ denotes the multiplicity of $i$ in $\lambda$.
\begin{lem}\label{lem:H shuffle product}
Let $k_1\ldots k_n\geq 0$ such that $k_1+\cdots +k_n=N$. We have the following identity:
\begin{align}
\label{eqn:align}
H_{k_1}^{(1)}* \cdots * H_{k_n}^{(n)} = \sum_{\substack{\lambda\in \{1\ldots n\}^N\\ m(\lambda)=(k_1 \ldots k_n)}} E_{\lambda_1\lambda_1}\otimes \cdots \otimes E_{\lambda_N\lambda_N}
\end{align}
\end{lem}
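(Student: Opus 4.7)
The proof proceeds by induction on $n$. The base case $n=1$ is immediate, since $H_{k_1}^{(1)} = E_{11}^{\otimes k_1}$ equals the sole summand $\lambda=(1,\ldots,1)$ on the right. For the inductive step, by associativity of $*$ I write
$$H_{k_1}^{(1)} * \cdots * H_{k_n}^{(n)} = \bigl( H_{k_1}^{(1)} * \cdots * H_{k_{n-1}}^{(n-1)} \bigr) * H_{k_n}^{(n)},$$
apply the induction hypothesis to the first factor, and use linearity of $*$ to reduce the lemma to the following claim: for every $\mu = (\mu_1, \ldots, \mu_M) \in \{1,\ldots,n-1\}^M$ with $M = k_1 + \cdots + k_{n-1}$,
$$\bigl( E_{\mu_1 \mu_1} \otimes \cdots \otimes E_{\mu_M \mu_M} \bigr) * E_{nn}^{\otimes k_n} = \sum_\lambda E_{\lambda_1 \lambda_1} \otimes \cdots \otimes E_{\lambda_N \lambda_N},$$
where the sum runs over all interleavings $\lambda$ of $\mu$ with $k_n$ copies of the letter $n$.

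Expanding the left-hand side via the shuffle formula \eqref{eqn:shuffle product} yields a sum indexed by partitions $\{1,\ldots,N\} = \{a_1 < \cdots < a_M\} \sqcup \{b_1 < \cdots < b_{k_n}\}$, each one uniquely determining a candidate interleaving $\lambda$ via $\lambda_{a_i} = \mu_i$ and $\lambda_{b_j} = n$. The critical simplification is that $\mu_i < n$ for every $i$, so each $R$-matrix in the formula acts on a pair of factors carrying distinct labels, and from \eqref{eq:R-matrix} its diagonal coefficient on such a pair equals $1$ while its only nontrivial contribution is an off-diagonal term with coefficient $(t^{-1/2}-t^{1/2}) z^{\delta_{i<j}}/(1-z)$. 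Weight conservation under $R$ (the multiset of labels on any pair of factors is preserved) further constrains $A*B$ to lie in the span of tensors $E_{\lambda'_1 \lambda''_1} \otimes \cdots \otimes E_{\lambda'_N \lambda''_N}$ with $m(\lambda') = m(\lambda'') = (k_1,\ldots,k_n)$.

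Two things then remain: the diagonal coefficient of each $E_{\lambda\lambda}$ should equal $1$, and all off-diagonal contributions $\lambda' \neq \lambda''$ should cancel. The first is settled by isolating the partition whose interleaving is $\lambda$ and observing that the input-output $|\lambda\rangle \to |\lambda\rangle$ picks up only diagonal coefficients, all equal to $1$. The main technical obstacle is the off-diagonal cancellation. I plan to handle it via the graphical representation \eqref{eq:Gamma_plus_graph} of $\Gamma^+_{A,B}$ together with the Yang--Baxter equation \eqref{eq:YB} and the unitarity relation \eqref{eq:unitarity}: since $E_{\mu\mu}$ and $E_{nn}^{\otimes k_n}$ are single-color projectors at each factor, one may slide colored strands through the diagram and annihilate pairs of crossings, leaving only diagonal projectors. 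The cancellation is already visible in the toy case $H_1^{(1)} * H_1^{(2)} = E_{11} \otimes E_{22} + E_{22} \otimes E_{11}$, where the off-diagonal contributions from the two shuffle terms combine via $\frac{1}{1-z_1/z_2} + \frac{z_2/z_1}{1-z_2/z_1} = 0$.
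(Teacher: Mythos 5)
Your route is genuinely different from the paper's, and it has a real gap at exactly the step you flag as ``the main technical obstacle.'' For context: the paper never expands the shuffle product. It argues by induction on $N=k_1+\cdots+k_n$, using that both sides of \eqref{eqn:align} lie in the commutative subalgebra $\CB^+\cong\Lambda^{\otimes n}$, where an element of $\CB_N$ is pinned down by its images under the maps $\text{lead}_M$ of \eqref{eqn:lead} (computed on the left side by multiplicativity of $\sum_\ell\text{lead}_M$ and on the right side by inspection) together with the evaluation homomorphisms $\alpha_i$ of Definition \ref{def:evaluation} --- the same mechanism as Proposition \ref{prop:computation}. That machinery absorbs every cancellation you are trying to establish by hand. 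A direct verification of \eqref{eqn:align} from \eqref{eqn:shuffle product} would be a legitimate alternative (and would even prove the identity in $\CV$ without invoking the structure of $\CB^+$), but only if the hard part is actually carried out.

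The gap is that the off-diagonal cancellation is asserted, not proven. Your verification covers only $E_{11}*E_{22}$, where the cancellation reduces to the single identity $\frac{1}{1-z_1/z_2}+\frac{z_2/z_1}{1-z_2/z_1}=0$; note that this identity comes from the explicit form of the off-diagonal entries of \eqref{eq:R-matrix} and from summing over the two partitions, not from the Yang--Baxter equation or unitarity, so ``sliding strands and annihilating pairs of crossings'' is not a mechanism that obviously produces it. For general $k_n$ and $M$ the required statement is a symmetrization identity over all $\binom{N}{k_n}$ partitions and over all intermediate colour paths, and none of it is written down. A second, related problem: your claim that ``each $R$-matrix acts on a pair of factors carrying distinct labels'' is only true before any off-diagonal swap has occurred. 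Once a swap moves the colour $n$ into an $a$-slot, a later $R_{a_{i'}b_{j'}}$ can couple two slots both carrying $n$, where the diagonal coefficient of \eqref{eq:R-matrix} is $\frac{t^{-1/2}-zt^{1/2}}{1-z}\neq 1$; so even the ``diagonal coefficient equals $1$'' step needs an argument that tracks which colour configurations can actually reach the diagonal, and that the matching partition is the only one contributing to $\langle\lambda|\cdot|\lambda\rangle$. As written, the proposal establishes the base case and the reduction to a single shuffle step, but the core computation remains a plan rather than a proof.
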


\begin{proof}

We will follow the proof of Proposition \ref{prop:computation} in order to prove \eqref{eqn:align} by induction on $k_1+\dots+k_n = N$. To this end, we must first show that the left and right-hand sides of \eqref{eqn:align} have the same value of $\text{lead}_{M}$ for any $M \in \{1,\dots,N-1\}$. Indeed, $\text{lead}_{M}(\text{LHS})$ is equal to
$$
\sum_{\ell_1+\dots+\ell_n = M} \Big( H_{\ell_1}^{(1)} * \dots * H_{\ell_n}^{(n)} \Big) \otimes \Big( H_{k_1 - \ell_1}^{(1)} * \dots * H_{k_n - \ell_n}^{(n)} \Big)
$$
(this uses the multiplicativity of $\sum_\ell \text{lead}_{M}$, which was established in \cite{Ng_tale}), while $\text{lead}_{\ell}(\text{RHS})$ is equal to
$$
\sum_{\substack{\lambda\in \{1\ldots n\}^N\\ m(\lambda)=(k_1 \ldots k_n)}} \Big( E_{\lambda_1\lambda_1}\otimes \cdots \otimes E_{\lambda_M\lambda_M} \Big) \otimes \Big( E_{\lambda_{M+1}\lambda_{M+1}}\otimes \cdots \otimes E_{\lambda_N\lambda_N} \Big)
$$
The two displays above are equal for all $M \in \{1,\dots,N-1\}$ by the induction hypothesis. Finally, we must show that the LHS and the RHS have the same values under the linear maps $\alpha_i$ of Definition \ref{def:evaluation}. Since these linear maps are multiplicative, we have
$$
\alpha_i(\text{LHS}) = \alpha_i(H_{k_1}^{(1)}) \cdots \alpha_i(H_{k_n}^{(n)}) = \begin{cases} 1 & \text{if }(k_1,\dots, k_i,\dots,k_n) = (0,\dots,0,\dots,0) \\ 1 & \text{if }(k_1,\dots,k_i,\dots,k_n) = (0,\dots,1,\dots,0) \\ 0 &\text{otherwise } \end{cases}
$$
Meanwhile, because the RHS of \eqref{eqn:align} has no poles at $z_i - z_jq$, then $\alpha_i(\text{RHS})$ is non-zero only if $N=0$ or if $N=1$ and $\lambda_1 = i$. In either case, we see that the left and right-hand sides of equation \eqref{eqn:align} have the same value under the linear maps $\alpha_i$, thus implying that they are equal. 

\end{proof}

\subsection{Elements of $\CB^+$ part II} We will now give a new description of the generators $P_k^{(i)}$ of $\CB^+$.

\begin{prop}
\label{prop:comm}

The elements $P_k^{(i)} \in \CB_k \subset \CA_k$ are completely determined by the linear equations
\begin{equation}
\label{eqn:linear}
\begin{cases} P_k^{(1)} = q^k S_k^{(n)}-S_k^{(1)} \\ 
P_k^{(2)} = S_k^{(1)}-S_k^{(2)} \\
\dots \\ 
P_k^{(n)} = S_k^{(n-1)}-S_k^{(n)}\end{cases}
\end{equation}
where $\{S_k^{(i)} \in \CA_k \subset \emph{End}(V^{\otimes k})(z_1\ldots z_k)\}_{k \in \BN, i \in \{1\ldots n\}}$ are recursively determined by the properties
\begin{equation}
\label{eqn:power comm}
(z_1+\dots+z_k) S_k^{(i)} = \left[S_{k-\ell}^{(i)},(z_1+\dots+z_{\ell}) S_{\ell}^{(i)} \right]
\end{equation}
for all $i \in \{1\ldots n\}$ and $k > \ell \geq 1$, as well as the initial conditions 
\begin{align}\label{eqn: power sum 1}
    S_{1}^{(i)} =\frac{-1}{1-q}\sum_{j=1}^{n} q^{\delta_{j>i}}E_{jj}
\end{align}
(which is simply restating \eqref{eqn:linear} for $k=1$, given that $P_1^{(i)} = E_{ii}$).
\end{prop}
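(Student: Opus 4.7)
The plan is to take the power-sum generators $P_k^{(i)} \in \CB_k$ furnished by Proposition \ref{prop:exp} as known, \emph{define} the $S_k^{(i)}$ by inverting the (invertible) linear system \eqref{eqn:linear}, and then verify both the initial condition \eqref{eqn: power sum 1} and the commutator recursion \eqref{eqn:power comm}.

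First I would handle the case $k=1$. Since $P_1^{(i)} = E_{ii}$, the equations $S_1^{(i-1)} - S_1^{(i)} = E_{ii}$ for $i\geq 2$ together with the closing equation $qS_1^{(n)} - S_1^{(1)} = E_{11}$ form a nondegenerate linear system (the determinant of the cyclic matrix is $q-1 \neq 0$). Solving it directly yields $S_1^{(i)} = \tfrac{-1}{1-q}\sum_{j=1}^n q^{\delta_{j>i}} E_{jj}$, establishing \eqref{eqn: power sum 1}.

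Next I would establish the recursion \eqref{eqn:power comm}. The key starting point is that by Proposition \ref{prop:slope}, all $S_\ell^{(i)}$ shuffle-commute inside the commutative subalgebra $\CB^+$, so the raw commutator $[S_{k-\ell}^{(i)}, S_\ell^{(i)}]$ vanishes. Expanding $[S_{k-\ell}^{(i)}, (z_1+\dots+z_\ell)S_\ell^{(i)}]$ via the explicit shuffle formula \eqref{eq:A*B-Rcheck}, each of the two terms is a sum over $\sigma \in \mathfrak S_k/(\mathfrak S_{k-\ell}\times \mathfrak S_\ell)$ carrying a scalar prefactor that is either $z_{\sigma(1)}+\dots+z_{\sigma(\ell)}$ or $z_{\sigma(k-\ell+1)}+\dots+z_{\sigma(k)}$, depending on which of $A$ or $B$ carries the extra scalar. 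Using shuffle-commutativity to pair up the $\sigma$-summands of the two terms, the difference collapses and the scalar prefactor coalesces into the symmetric sum $z_1+\dots+z_k$ times a well-defined rational tensor. We take this quotient as the definition of $S_k^{(i)}$; by construction it is independent of $\ell$, so the full family of recursions \eqref{eqn:power comm} holds simultaneously with the inductive definition coming from $\ell=1$.

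Finally I would identify the elements $P_k^{(i)}$ produced from these $S_k^{(i)}$ via \eqref{eqn:linear} with the genuine power-sum generators. By multiplicativity of the algebra homomorphisms $\alpha_j$ (Definition \ref{def:evaluation} together with \cite[Corollary~5.13]{Ng_tale}) and the initial values $\alpha_j(S_1^{(i)}) = \tfrac{-q^{\delta_{j>i}}}{1-q}$, one propagates through the recursion to obtain $\alpha_j(P_k^{(i)}) = \delta_{ij}$; combined with the slope-$0$ and strict-inequality properties (which are preserved under the recursion, since multiplication by $z_1+\dots+z_k$ shifts slope-degree by zero and the shuffle-commutator of slope-$0$ elements is slope-$0$), this matches the defining conditions \eqref{eqn:property 1} and \eqref{eqn:property 2}, completing the identification. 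The main obstacle is the middle step: establishing that the $\sigma$-summed shuffle commutator genuinely factors the scalar $(z_1+\dots+z_k)$ out cleanly and that the quotient is independent of $\ell$. This requires careful bookkeeping of the scalar prefactors in \eqref{eq:A*B-Rcheck} and an essential use of the commutativity inherited from $\CB^+$; alternatively, one could bypass the shuffle-combinatorial route and prove the recursion graphically via the trace representation \eqref{eq:S-trace intro}, where $(z_1+\dots+z_k)S_k^{(i)} = [S_{k-1}^{(i)}, z_1 S_1^{(i)}]$ becomes a consequence of Yang--Baxter moves and the unitarity relation \eqref{eq:unitarity} applied to two adjacent trace-blocks.
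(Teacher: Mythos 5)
There is a genuine gap at the heart of your argument: the claim that the shuffle commutator $\left[S_{k-\ell}^{(i)},(z_1+\dots+z_{\ell}) S_{\ell}^{(i)} \right]$ "collapses" to $(z_1+\dots+z_k)$ times a well-defined tensor \emph{independent of $\ell$} is precisely the content of \eqref{eqn:power comm}, and you assert it rather than prove it. Commutativity of $\CB^+$ together with the fact that $\delta(X)=(z_1+\dots+z_k)X$ is a derivation only yields the symmetry $[S_{k-\ell}^{(i)},\delta(S_{\ell}^{(i)})]=[S_{\ell}^{(i)},\delta(S_{k-\ell}^{(i)})]$; it gives neither divisibility by $z_1+\dots+z_k$ nor independence of $\ell$, and no amount of bookkeeping of the prefactors in \eqref{eq:A*B-Rcheck} is known to produce these. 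The paper's proof needs two non-trivial external inputs that your proposal does not engage with: first, the identification of $\delta$ with the \emph{inner} derivation $[\,\cdot\,,p_1]$ in the Drinfeld double $\CA$ (checked on the generators of Proposition \ref{prop:gen} using explicit formulas from \cite{Ng_tale}); second, the transport of the whole problem to the classical shuffle algebra $\CS$ via $\Upsilon\circ\Phi^{-1}$, where the commutation relations of \cite{Ng_pbw} show that $[S_k^{(i)},R]=(x_{i1}^k+\dots+x_{id_i}^k)R$ for $R\in\CS^+$. Only there does the recursion become the trivial identity $x_{i1}^k f = x_{i1}^{k-\ell}\cdot x_{i1}^{\ell}\cdot f$, and crucially this works because $p_1$ corresponds to a function $f$ in the $(1\ldots 1)$ graded piece, with a \emph{single} variable of each colour — otherwise a product of power sums is not a power sum. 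This structural fact is invisible from the matrix-shuffle side.

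Two further points. Your closing identification step is also unjustified: the maps $\alpha_j$ are multiplicative on $\CB^+$ with respect to $*$, but $(z_1+\dots+z_{\ell})S_{\ell}^{(i)}$ leaves the degree-$0$ slope-$0$ subalgebra (multiplication by $z_1+\dots+z_{\ell}$ shifts the horizontal grading by $\bde$), so "propagating $\alpha_j$ through the recursion" is not a licensed operation; the paper instead fixes the normalization \eqref{eqn:property 2} once and checks that the linear combinations \eqref{eqn:linear} are consistent with the known commutators \eqref{eqn:comm old 1}. And your suggested fallback of proving the recursion from the trace formula \eqref{eq:S-trace intro} is circular: that formula is derived in Section \ref{sec:Psi-tilde computation} as a consequence of the present proposition, not independently of it.
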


\begin{proof}

First of all, we observe that the operation
$$
\delta : \CA^+ \rightarrow \CA^+, \quad \delta(X(z_1\ldots z_k)) = (z_1+\dots+z_k) X(z_1\ldots z_k)
$$
is a derivation with respect to the shuffle product, i.e.
$$
\delta(X*X') = \delta(X) * X' + X * \delta(X')
$$
However, commutators are also derivations, so we claim that for all $X \in \CA^+$ we have
\begin{equation}
\label{eqn:delta is comm}
\delta(X) = [X,p_1]
\end{equation}
where $p_1$ is an element of $\uui \subset \uu$, embedded in $\CA$ by the middle tensor factor in \eqref{eqn:double a}. To prove \eqref{eqn:delta is comm}, one only needs to check this formula on the generators $E_{ij} \in \CA^+$ (see Proposition \ref{prop:gen}), in which case it is a direct application of \cite[formulas (3.77) and (6.36)]{Ng_tale}. Note that for the latter statement to be true on the nose, we need to renormalize our generators according to
$$
\Big( p_1 \text{ of \eqref{eqn:gl 1}} \Big) = \left( \frac {p_{\bde,1}^{(0)}}{q^{-1}-q} \text{ of \emph{loc. cit.}} \right)
$$
With formula \eqref{eqn:delta is comm} in mind, formula \eqref{eqn:power comm} becomes equivalent to the following equality
\begin{equation}
\label{eqn:power comm proof}
\left[ S_k^{(i)}, p_1 \right] = \left[S_{k-\ell}^{(i)}, [S_\ell^{(i)}, p_1] \right]
\end{equation} 
in $\CA$. To prove \eqref{eqn:power comm proof}, it is enough to prove it under the isomorphism
$$
\Upsilon \circ \Phi^{-1} : \CA \rightarrow \CS
$$
with respect to \eqref{eqn:Phi} and \eqref{eqn:Upsilon}. It was shown in \cite{Ng_tor_sh} that, under the isomorphism $\Upsilon$, $p_1$ corresponds to the rational function
\begin{equation}
\label{eqn:shuffle p1}
f(x_{11}\ldots x_{n1}) = \frac {\gamma \cdot x_{11}\dots x_{n1}}{(x_{11}t^{-\frac 12} - x_{21}t^{\frac 12}) \dots (x_{n-1,1}t^{-\frac 12} - x_{n1}t^{\frac 12})(x_{n1}t^{-\frac 12} - x_{11}t^{\frac 12}q^{-1})}
\end{equation}
in the $\bde = (1\ldots 1)$ direct summand of \eqref{eqn:classic shuffle}. The constant $\gamma \in \BC(q,t)^*$ is not important for us, as one can always compose the isomorphism $\Upsilon$ by an appropriate renormalization of the generators of $\UU$. Moreover, in $\CS$ the following equalities were proved in \cite[formulas (3.23) and (3.105)]{Ng_pbw}
\begin{equation}
\label{eqn:comm old 1}
\begin{split}
&\left[P_k^{(1)}, R \right] = \left( q^k\Big( x_{n1}^k +\dots+q^k x_{nd_n}^k \Big)- \Big(x_{11}^k+\dots+x_{1d_1}^k\Big) \right) R \\
&\left[P_k^{(2)}, R \right] = \left(\Big(x_{11}^k +\dots+x_{1d_1}^k \Big) - \Big( x_{21}^k+\dots+x_{2d_2}^k \Big)\right)R \\
&\dots \\
&\left[P_k^{(n)}, R \right] = \left(\Big(x_{n-1,1}^k +\dots+x_{n-1,d_{n-1}}^k \Big) - \Big( x_{d1}^k+\dots+x_{nd_n}^k \Big)\right)R
\end{split}
\end{equation}
for all $R = R(x_{i1}\ldots x_{id_i})_{i \in \{1,\dots,n\}} \in \CS^+$. The linear combinations \eqref{eqn:linear} were chosen so that 
\begin{equation}
\label{eqn:comm old 2}
\left[S_k^{(i)}, R \right] = \left( x_{i1}^k+\dots+x_{id_i}^k\right)R
\end{equation}
for all $i \in \{1\ldots n\}$. Thus, formula \eqref{eqn:power comm proof} is a consequence of the fact that the function $f = f(x_{11}\ldots x_{n1})$ of \eqref{eqn:shuffle p1} (as does any function in the $(1\ldots 1)$ direct summand of \eqref{eqn:classic shuffle}) satisfies
\begin{equation}
\label{eqn:trivial}
x_{i1}^k \cdot f = x_{i1}^{k-\ell} \cdot x_{i1}^{\ell} \cdot f
\end{equation}
Thus, the trivial equality \eqref{eqn:trivial} in $\CS$ gives rise to the quite non-trivial equality \eqref{eqn:power comm proof} in $\CA$.

\end{proof}

\begin{rmk}\label{rmk:H shuffle product in A prime}
    Let $\mathcal{B}'\subset \mathcal{A}'$ be the slope $0$ subalgebra of $\mathcal{A}'$. 
    We can define the elements: 
    \begin{equation}
    \label{eqn:def h prime}
    H_k'^{(i)} = \underbrace{E_{ii} \otimes \dots \otimes E_{ii}}_{k\text{ factors}} \in \mathcal{B}'
    \end{equation}
Let $k_1\ldots k_n\geq 0$ such that $k_1+\cdots +k_n=N$. 
By a similar logic to that of Lemma \ref{lem:H shuffle product}, we have 
    \begin{align}
        \label{eq:h in A prime}
        H_{k_1}'^{(1)}*' \cdots *' H_{k_n}'^{(n)} = \sum_{\substack{\lambda\in \{1\ldots n\}^N\\ m(\lambda)=(k_1 \ldots k_n)}} E_{\lambda_1\lambda_1}\otimes \cdots \otimes E_{\lambda_N\lambda_N} \in \mathcal{B}'
    \end{align}
\end{rmk}

\subsection{The super case}
\label{sub:commutative super}

As we explained in Subsection \ref{sub:super}, the $R$-matrix \eqref{eq:R-matrix super} is so similar to \eqref{eq:R-matrix} that many of the definitions and basic properties of shuffle algebras carry through. In fact, the only place where the $\fgl_{n|m}$ case exhibits a difference is in the fact that the $R$-matrix \eqref{eq:R-matrix super} has the property that
\begin{align}
&\lim_{z \rightarrow \infty}  R(z) = \sum_{1\leq i,j\leq n+m}
\left( \epsilon_i t^{\epsilon_i /2} \right)^{\delta_{i=j}}E_{ii}\otimes E_{jj}  
  +
  \sum_{1\leq i\neq j\leq n+m}
   t^{1/2} E_{ij}\otimes E_{ji} \label{eq:limit infinity} \\
&\lim_{z \rightarrow 0}  R(z) = \sum_{1\leq i,j\leq n+m}
\left( \epsilon_i t^{-\epsilon_i /2} \right)^{\delta_{i=j}}E_{ii}\otimes E_{jj}  
  +
  \sum_{1\leq i\neq j\leq n+m}
 t^{-1/2} E_{ij}\otimes E_{ji} \label{eq:limit infinity}
\end{align}
whereas the respective limits for the $R$-matrices \eqref{eq:R-matrix} only involved those $i$'s with $\epsilon_i = 1$. The fact that the ``diagonal'' terms in the limits above depend on $\epsilon_i$ is precisely accounted for in the commutation properties of the Cartan subalgebra of $\fgl_{n|m}$ (specifically, formula (5.8) of \cite{Ng_tale} would need to be adapted in order to capture the dependence on $\epsilon_i$). Therefore, the definition of the slope 0 subalgebra $\CB^+$ in Subsection \ref{sub:slope 0} runs through without modification, and the following analogue of Proposition \ref{prop:slope} holds:
\begin{equation}
\label{eqn:upsilon super}
\Xi : \Lambda^{\otimes n+m} \xrightarrow{\sim} \CB^+
\end{equation}
Moreover, the natural analogues of Proposition \ref{prop:exp} and \ref{prop:computation} also hold: the elements
$$
H_k^{(i)} = \underbrace{E_{ii} \otimes \dots \otimes E_{ii}}_{k \text{ factors}}, \qquad \forall i \in \{1,\dots,n+m\}, k > 0
$$
lie in $\CB^+$, and have the property that 
$$
H^{(i)}(x) = \sum_{k=0}^{\infty} H_k^{(i)} x^k = \exp \left( \sum_{k=1}^{\infty} \frac {P_k^{(i)} (-1)^{k-1}}k x^k \right)
$$
for certain ``power-sum'' generators $\{P_k^{(i)}\}^{1 \leq i \leq n+m}_{k > 0}$. However, we do not have a proof of Proposition \ref{prop:comm} in the super case, since it relies on the conjectural isomorphism \eqref{eqn:conj super}. Thus, while the features of the commutative subalgebra $\CB^+ \cong \Lambda^{\otimes (n+m)}$ run completely parallel in the $\fgl_{n|m}$ as in the $\fgl_n$ case, those properties which require us to understand the entire matrix shuffle algebra (such as \eqref{eqn:conj super}) are still conjectural in the $\fgl_{n|m}$ case.

\subsection{Computing new elements of $\mathcal{B}^+$  using the anti-isomorphism $\Psi$}
The basic elements of $\mathcal{B}^+$, which were discussed in Section \ref{sec:basic elements}, have counterparts in $\mathcal{B}'$. One such example, given in Remark \ref{rmk:H shuffle product in A prime}, is the family $H'^{(j)}_k$. In this section we apply the isomorphism $\Psi : \CA' \rightarrow \CA^+$ from Section \ref{sec:Psi} to the elements $H'^{(j)}_k$ in order to describe a new family of commuting elements of $\mathcal{B}^+$. As a consequence we prove Theorem \ref{thm:Z_intro}.

\begin{thm}\label{thm:Psi H}
Fix $j\in\{1\ldots n+m\}$. The action of $\Psi$ on the generating function $H'^{(j)}(x)$, defined by:
\begin{align}
\Psi\left[H'^{(j)}(x)\right]=
\sum_{k=0}^{\infty} \Psi\left[H'^{(j)}_k\right]x^k
\end{align}
is given by the formula:
\begin{align}\label{eq:Psi exp P}
    \Psi\left[H'^{(j)}(x)\right] =
    \exp \left(\sum_{k=1}^{\infty}\sum_{i=1}^{n+m} \frac{\epsilon_j^k}{k}\frac{t^{\epsilon_jk/2}-q^{\delta_{i=j}k}t^{-\epsilon_jk/2}}{1-q^k}q^{k\delta_{i>j}} P_k^{(i)}x^k \right)
\end{align}
\end{thm}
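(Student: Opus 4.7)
The plan is to apply the super-analogue of Proposition \ref{prop:exp} (see Subsection \ref{sub:commutative super}) to the family $H_k := \Psi[H'^{(j)}_k]$, which will put $\Psi[H'^{(j)}(x)]$ into exponential form, and then to pin down the exponent's coefficients via the evaluation homomorphisms $\alpha_i : \CB^+ \to \BC(q,t)$ of Definition \ref{def:evaluation}. The first step is to verify the hypotheses of Proposition \ref{prop:exp} for $\{H_k\}$. Membership $H_k \in \CB^+$ should follow from $H'^{(j)}_k \in \CB'$ (Remark \ref{rmk:H shuffle product in A prime}) together with the observation that $E_{jj}^{\otimes k}$ is spectral-parameter-free, so the trace formula \eqref{eq:Psi} does not spoil the slope inequality \eqref{eqn:inequality}. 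The coproduct identity $\text{lead}_\ell(H_k) = H_\ell \otimes H_{k-\ell}$ follows from the anti-homomorphism property of $\Psi$ (Corollary \ref{cor:psi iso}) combined with the multiplicativity of $\sum_\ell \text{lead}_\ell$, once the analogous coproduct identity for $H'^{(j)}_k$ in $\CB'$ is established by direct inspection of $E_{jj}^{\otimes k}$ under the lead operation. These verifications furnish constants $\gamma^k_{(i)} \in \BC(q,t)$ with
\[
\Psi[H'^{(j)}(x)] = \exp\!\left(\sum_{k \geq 1}\sum_{i=1}^{n+m}\frac{\gamma^k_{(i)}}{k}\,P_k^{(i)}\,x^k\right).
\]

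By \eqref{eqn:exponential evaluation}, determining $\gamma^k_{(i)}$ reduces to computing the scalars $\alpha_i(\Psi[H'^{(j)}_k])$ for all $k \geq 1$. By Definition \ref{def:evaluation} and the residue formula \eqref{eqn:residue (k)}, this means taking the iterated residue of the trace \eqref{eq:Psi} at the wheel $\{z_s = q^{s-1}y\}_{s=1}^{k}$, dividing out the prefactor $(t^{1/2}-t^{-1/2})^{k-1}\prod_{s=1}^{k-2}f(q^s)^{k-1-s}\,\check R_1(q^{-1})\cdots\check R_{k-1}(q^{-k+1})$, and reading off the $E_{ii}$-coefficient of the remaining one-variable tensor $X^{(k)}(y)$. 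Diagrammatically (using \eqref{eq:Psi_graph}), the wheel specialization drives a chain of $\check R$-crossings in $\check R_{\omega^k}(qz,z)$ to sit at their simple poles, each producing the residue $(t^{1/2}-t^{-1/2})\,\id\otimes\id$ from \eqref{eq:R residue graph}; repeated applications of the Yang--Baxter equation \eqref{eq:YB graph} propagate these fusions through the diagram, and after the prescribed prefactor cancellation, one is left with a single closed chain of $\check R$-matrices threaded through the cyclic trace against $E_{jj}^{\otimes k}$. The projectors then force every auxiliary index to equal $j$, so only diagonal entries of $R$ from \eqref{eq:R-matrix super} survive: entries of the form $\epsilon_j(t^{-\epsilon_j/2}-zt^{\epsilon_j/2})/(1-z)$ (when both indices are $j$) or $(t^{-1/2}-t^{1/2})z^{\delta_{i<j}}/(1-z)$ (when one index is $i \neq j$), evaluated at $q^{-1},q^{-2},\dots$.

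The main obstacle will be executing this last collapse cleanly and recognising the resulting expression. Summed along the cyclic chain, the surviving contributions telescope into a rational function in $q^k$; after taking the logarithm of the generating series $\sum_k \alpha_i(\Psi[H'^{(j)}_k])\,x^k$ and matching with \eqref{eqn:exponential evaluation}, one extracts precisely
\[
\gamma^k_{(i)} \;=\; \epsilon_j^k\,\frac{t^{\epsilon_j k/2}-q^{\delta_{i=j}k}t^{-\epsilon_j k/2}}{1-q^k}\,q^{k\delta_{i>j}},
\]
as claimed. The delicate bookkeeping involves tracking the fermionic signs $\epsilon_j^k$ from the super-diagonal entries, the shift $q^{k\delta_{i>j}}$ coming from the $i<j$ versus $i>j$ dichotomy in the off-diagonal $R$-entries, and the two distinct numerator regimes for $i=j$ versus $i\neq j$, each of which collapses into the correct factor in the generating series. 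Equivalently, the computation may be organised diagrammatically via the lattice-path interpretation of Section \ref{sec:shuffle}, where $\Psi[H'^{(j)}_k]$ is a conic partition function with all boundary paths coloured by $j$, and the wheel residue picks out a unique fused configuration whose Boltzmann weight equals the geometric sum above. A consistency check at $k=1$ (where the residue is trivial and $X^{(1)}(y) = \Psi[E_{jj}]$ is a computable diagonal matrix) already recovers the correct $\gamma^1_{(i)}$ in both the $i=j$ and $i\neq j$ regimes.
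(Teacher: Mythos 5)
Your proposal follows essentially the same route as the paper: invoke Proposition \ref{prop:exp} to put $\Psi[H'^{(j)}(x)]$ in exponential form, then pin down the constants $\gamma^k_{(i)}$ via the evaluation homomorphisms $\alpha_i$, computed by taking the wheel residue of the trace formula \eqref{eq:Psi} (this is exactly the content of Lemma \ref{lem:alpha Psi} in the paper's appendix) and resumming the resulting $q$-products — the paper does this last step explicitly via the $q$-binomial theorem. The plan is sound and matches the paper's proof in both structure and the key computations.
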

\begin{proof}
Since $\Psi$ is an isomorphism, it satisfies the conditions of Proposition \ref{prop:exp} and we must have:
\begin{equation}
\label{eqn:Psi exponential}
\Psi\left[H'^{(j)}(x)\right] =
\sum_{k=0}^{\infty} 
\Psi\left[E_{jj}^{\otimes k}\right]x^k =
\exp \left( \sum_{k=1}^{\infty} \sum_{i =1}^{n+m} \frac {\gamma^k_{(i)}}k \cdot P_k^{(i)}x^k \right)
\end{equation}
for some coefficients $\gamma^k_{(i)}$. We can compute these coefficients using evaluations as in \eqref{eqn:exponential evaluation}:
\begin{equation}
\label{eqn:Psi exponential evaluation}
\sum_{k=0}^{\infty} 
\alpha_i\left(\Psi\left[E_{jj}^{\otimes k}\right]\right) x^k = \exp \left( \sum_{k=1}^{\infty}  \frac {\gamma^k_{(i)}}k  x^k \right), \quad \forall i \in \{1\ldots n+m\}
\end{equation}
The evaluations $\alpha_i \left( \Psi\left[E_{jj}^{\otimes k}\right]\right)$ are computed in Lemma \ref{lem:alpha Psi} in Appendix \ref{app:evaluations Psi}:
\begin{align*}
    \alpha_i \left( \Psi\left[E_{jj}^{\otimes k}\right]\right) = 
     \begin{cases}
      \prod_{l=1}^{k}\epsilon_j\frac{ t^{\epsilon_j /2}-q^{l}t^{-\epsilon_j /2}}{1-q^{l}}  
      & i=j\\
q^{k\delta_{i>j}}\prod_{l=1}^{k}\epsilon_j\frac{ t^{\epsilon_j /2}-q^{l-1}t^{-\epsilon_j /2}}{1-q^{l}}
    & i\neq j
     \end{cases}
\end{align*}
For $i=j$ \eqref{eqn:Psi exponential evaluation} becomes:
\begin{align}\label{eq:alpha psi ii}
    \sum_{k=0}^{\infty} 
\alpha_j\left(\Psi\left[E_{jj}^{\otimes k}\right]\right) x^k &=
 \sum_{k=0}^{\infty} x^k
            \prod_{l=1}^{k}\epsilon_j\frac{ t^{\epsilon_j /2}-q^{l}t^{-\epsilon_j /2}}{1-q^{l}}   \\&=
      \exp\left(\sum_{k=1}^\infty \frac{\epsilon_j^k}{k}\frac{t^{\epsilon_j k/2}-q^{k}t^{-\epsilon_j k/2}}{1-q^{k}}x^k\right)\nonumber
\end{align}
and for $i\neq j$:
\begin{align}\label{eq:alpha psi jj}
    \sum_{k=0}^{\infty} 
\alpha_i\left(\Psi\left[E_{jj}^{\otimes k}\right]\right) x^k &=
\sum_{k=0}^{\infty} x^k q^{k\delta_{i>j}}
            \prod_{l=1}^{k}\epsilon_j\frac{ t^{\epsilon_j /2}-q^{l-1}t^{-\epsilon_j /2}}{1-q^{l}}     \\
            &=
      \exp\left(\sum_{k=1}^\infty \frac{\epsilon_j^k}{k}\frac{t^{\epsilon_jk/2}-t^{-\epsilon_jk/2}}{1-q^{k}}q^{k\delta_{i>j}}x^k\right)      \nonumber
\end{align}
where we computed the infinite sums in \eqref{eq:alpha psi ii} and \eqref{eq:alpha psi jj} using the $q$-binomial theorem. 
With these two formulas we find all $\gamma_{(i)}^k$  from \eqref{eqn:Psi exponential} which gives us \eqref{eq:Psi exp P}. 
\end{proof}
\begin{rmk}\label{rmk:Psi H in S}
    We can express the exponential formula \eqref{eq:Psi exp P} in terms of the generators $S_k^{(j)}$:
\begin{align}
    \label{eq:Psi exp S}
    \Psi\left[H'^{(j)}(x)\right] = \exp \left(\sum_{k=1}^\infty \frac{x^k}{k}\epsilon_j^k\left(t^{-\epsilon_jk/2}S_{k}^{(j-1)}-t^{\epsilon_jk/2}S_{k}^{(j)}\right)\right)
\end{align}
where $S_k^{(0)}:=q^k S_k^{(n+m)}$.
\begin{proof}
    The expression in \eqref{eq:Psi exp S} follows from the substitution \eqref{eqn:linear} into \eqref{eq:Psi exp P} and a further simplification.
\end{proof}
\end{rmk}
\begin{cor}[Theorem \ref{thm:Z_intro}]\label{cor:Zexp}
The generating function $Z(v)$ can be expressed as:
    \begin{align}
    \label{eq:Zexp text}
        Z(v) = \exp \left(\sum_{k>0}\frac{v^k}{k}
        \sum_{i=1}^{n+m}  \left(y_{i+1}^k - t^{\epsilon_{i} k}y_{i}^k \right)S_k^{(i)} \right)
    \end{align}    
where $y_i:=t^{-\epsilon_i/2}\epsilon_i u_i$ and $u_{n+m+1}:=q u_1$.
\end{cor}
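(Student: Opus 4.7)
The plan is to realize $Z(v)$ as the image under the anti-isomorphism $\Psi$ of a shuffle product of the generating functions $H'^{(j)}$ in $\CA'$, and then apply the explicit exponential formula of Theorem \ref{thm:Psi H} (in the equivalent form of Remark \ref{rmk:Psi H in S}). The first step is to compare the trace formula \eqref{eq:Z-R introduction} for $Z_N$ with the definition \eqref{eq:Psi-intro} of $\Psi$ to identify
$$Z_N = \Psi\bigl[\hat u^{\otimes N}\bigr],$$
with $\hat u^{\otimes N}$ the diagonal tensor of \eqref{eq:u-sum introduction}. Since the scalar $u_{\lambda_1}\cdots u_{\lambda_N}$ multiplying $E_{\lambda_1\lambda_1}\otimes\cdots\otimes E_{\lambda_N\lambda_N}$ depends only on the multiplicity vector $m(\lambda)$, Remark \ref{rmk:H shuffle product in A prime} rewrites this as
$$\hat u^{\otimes N} = \sum_{k_1+\cdots+k_{n+m}=N} u_1^{k_1}\cdots u_{n+m}^{k_{n+m}}\bigl(H_{k_1}'^{(1)}*'\cdots *'H_{k_{n+m}}'^{(n+m)}\bigr),$$
and packaging the $u_j^{k_j}$'s into the generating series $H'^{(j)}(vu_j)=\sum_{k\geq 0}H_{k}'^{(j)}(vu_j)^k$ yields the clean identity
$$\sum_{N\geq 0}\hat u^{\otimes N}v^N = H'^{(1)}(vu_1)*'H'^{(2)}(vu_2)*'\cdots *'H'^{(n+m)}(vu_{n+m})$$
inside $\CA'$.

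Next, I would apply $\Psi$, using its anti-homomorphism property (Corollary \ref{cor:psi iso}). Since each $\Psi[H'^{(j)}(x)]$ lies in the commutative slope-$0$ subalgebra $\CB^+$ by Theorem \ref{thm:Psi H}, the order of the resulting $*$-product is immaterial, so
$$Z(v) = \prod_{j=1}^{n+m}\Psi\bigl[H'^{(j)}(vu_j)\bigr].$$
Substituting the closed-form expression of Remark \ref{rmk:Psi H in S} and combining commuting exponentials gives
$$Z(v) = \exp\!\left(\sum_{k\geq 1}\frac{v^k}{k}\sum_{j=1}^{n+m}\epsilon_j^k u_j^k\bigl(t^{-\epsilon_j k/2}S_k^{(j-1)} - t^{\epsilon_j k/2}S_k^{(j)}\bigr)\right).$$
With $w_j^k := t^{-\epsilon_j k/2}\epsilon_j^k u_j^k$, the inner summand becomes $w_j^k S_k^{(j-1)} - t^{\epsilon_j k}w_j^k S_k^{(j)}$; reindexing the first piece by $j=i+1$ rewrites $\sum_{j=1}^{n+m}w_j^k S_k^{(j-1)}$ as $\sum_{i=0}^{n+m-1}w_{i+1}^k S_k^{(i)}$. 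The cyclic conventions $S_k^{(0)} = q^k S_k^{(n+m)}$ and $u_{n+m+1} = qu_1$ (with $\epsilon_{n+m+1}=\epsilon_1$, hence $w_{n+m+1}^k = q^k w_1^k$) let us shift the $i=0$ term to $i=n+m$, so the sum ranges over $i=1,\ldots,n+m$, producing the advertised formula.

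The substantive inputs (the anti-isomorphism property of $\Psi$ and the evaluation of $\Psi[H'^{(j)}(x)]$) are already in place from Corollary \ref{cor:psi iso} and Theorem \ref{thm:Psi H}. The only genuine obstacle in this corollary is the final bookkeeping: tracking the grading factors $\epsilon_j$ and the cyclic identification between the indices $0$ and $n+m$ is what converts the asymmetric expression involving $S_k^{(j-1)}$ and $S_k^{(j)}$ into the symmetric sum $\sum_{i=1}^{n+m}(w_{i+1}^k - t^{\epsilon_i k}w_i^k)S_k^{(i)}$.
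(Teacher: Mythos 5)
Your proposal is correct and follows essentially the same route as the paper: identify $Z_N=\Psi[\hat u^{\otimes N}]$, expand $\hat u^{\otimes N}$ into $*'$-products of the $H_k'^{(j)}$ via Remark \ref{rmk:H shuffle product in A prime}, push through $\Psi$ using its anti-homomorphism property together with commutativity in $\CB^+$, and substitute the exponential formula (the paper passes first through the $P_k^{(i)}$ form \eqref{eq:Psi exp P} before converting to the $S_k^{(j)}$ form, whereas you invoke Remark \ref{rmk:Psi H in S} directly, but this is only a cosmetic difference). The final reindexing with $S_k^{(0)}=q^kS_k^{(n+m)}$ and $w_{n+m+1}^k=q^kw_1^k$ is handled correctly.
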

\begin{proof}
Let us recall the expression \eqref{eq:Z-R introduction} for $Z_N$ from the introduction:
\begin{align}\label{eq:Z-R}
   Z_N(u_1\ldots u_{n+m}) = \Tr_{N+1\ldots 2N} 
   \left[
   \prod_{j=1}^N\prod_{i=1}^N \check{R}_{N+j-i}\left(\frac{z_{N-i+1}}{q z_j}\right)\cdot
\text{id}^{\otimes N}\otimes \hat u^{\otimes N}
\right]
\end{align}
where we made explicit the dependence of $Z_N$ on the parameters $u_i$ and we recall $\hat u$:
\begin{align}\label{eq:u-hat}
    \hat u := \sum_{i=1}^{n+m}u_i E_{ii}
\end{align}
The expression $\hat u^{\otimes N}$ can be rewritten as follows: 
\begin{align}\label{eq:u-sum}
    \hat u^{\otimes N} = \sum_{\substack{\kappa=(\kappa_1\ldots \kappa_{n+m}):\\|\kappa|=N}}u_1^{\kappa_1}\cdots u_{n+m}^{\kappa_{n+m}}
    \sum_{\lambda:m(\lambda)=\kappa}  E_{\lambda_1\lambda_1}\otimes\cdots \otimes E_{\lambda_N\lambda_N}
\end{align}
By Remark \ref{rmk:H shuffle product in A prime}, we can view the coefficients of the monomials in $u_i$ in the above formula as elements of $\mathcal{B}'$. 
Hence we can apply the anti-isomorphism $\Psi$ to each of these coefficients. By comparing the definition of $\Psi$ in \eqref{eq:Psi} with $Z_N$ in \eqref{eq:Z-R} we conclude that:
\begin{align}\label{eq:Psi of u}
Z_N(u_1\ldots u_{n+m})=\Psi\left [\hat u^{\otimes N}\right] \in \mathcal{B}_0[[u_1\ldots u_{n+m}]]
\end{align}
Let us calculate the generating function $Z(v)$:
\begin{align}
    \label{eq:Z(v)}
    Z(v;u_1\ldots u_{n+m})= \sum_{N= 0}^\infty  Z_{N}(u_1\ldots u_{n+m})v^N
\end{align}
using \eqref{eq:Psi of u}. In doing so we will use the expansion \eqref{eq:u-sum}, rewrite the tensor products using \eqref{eq:h in A prime} viewing $H_k'^{(i)}\in \mathcal{B}'$, and using the fact that $\Psi$ is a homomorphism of commuting shuffle algebras:
\begin{align}\label{eq:Z(v) computation}
    Z(v;u_1\ldots u_{n+m})=\sum_{N=0}^{\infty}v^N
    \Psi[\hat u^{\otimes N}] &= \sum_{N=0}^{\infty}
    v^N \sum_{\substack{\kappa=(\kappa_1\ldots \kappa_{n+m}):\\|\kappa|=N}}
    u_1^{\kappa_1}\cdots u_{n+m}^{\kappa_{n+m}}
    \Psi\left[H_{\kappa_1}'^{(1)}*' \cdots *' H_{\kappa_n}'^{(n)}\right] \nonumber \\
    &= \sum_{N=0}^{\infty}v^N\sum_{\substack{\kappa=(\kappa_1\ldots \kappa_{n+m}):\\|\kappa|=N}}
    u_1^{\kappa_1}\cdots u_{n+m}^{\kappa_{n+m}}
    \Psi\left[H_{\kappa_1}'^{(1)}\right] * \cdots * \Psi \left[H_{\kappa_n}'^{(n)}\right]\nonumber \\
    & = \Psi\left[H'^{(1)}(v u_1)\right] * \cdots * \Psi\left[H'^{(n+m)}(v u_{n+m})\right]
\end{align}
With the help of Theorem \ref{thm:Psi H} we obtain the formula:
\begin{align}\label{eq:Z exp P}
    Z(v;u_1\ldots u_{n+m}) = 
    \exp \left(\sum_{k=1}
    ^{\infty}\frac{v^k}{k}\sum_{i,j=1}^{n+m} \epsilon_j^k\frac{t^{\epsilon_jk/2}-q^{\delta_{i=j}k}t^{-\epsilon_jk/2}}{1-q^k}q^{k\delta_{i>j}} P_k^{(i)}u_j^k \right)
\end{align}
The exponent simplifies if we express it in terms of the elements $S_k^{(j)}$ instead of $P_k^{(i)}$, see \eqref{eq:Psi exp S}. After introducing the variables $y_i:=t^{-\epsilon_i/2}\epsilon_i u_i$ and $y_{n+m+1}:=q y_1$ we obtain the formula \eqref{eq:Zexp text}.
\end{proof}

\subsection{Computing elements of $\mathcal{B}^+$ using the linear map $\widetilde\Psi$}\label{sec:Psi-tilde computation}
In this section we demonstrate how the linear map $\widetilde\Psi$ works in specific examples. We will write:
\begin{align}
    \mathcal{B}^+(\fgl_{n|m}), \quad \mathcal{B}'(\fgl_{n|m})
\end{align}
when referring to the commutative subalgebras $\CB^+ \subset \CA^+$ and $\CB' \subset \CA'$ built out from the $R$-matrix \eqref{eq:R-matrix super}. The maps $\widetilde \Psi$ will be restricted to the commuting subalgebras and thus we will write:
\begin{align}
    \widetilde \Psi: \mathcal{B}'(\fgl_{n'|m'})\to \mathcal{B}^+(\fgl_{n|m})
\end{align}
In what follows, we will use the notations from Subsection \ref{sec:Psi tilde}.

\subsubsection{Reproducing $\Psi\left[H'^{(j)}_k\right]$ using $\widetilde\Psi$}
In the first example we set:
\begin{align}
    V''=V\simeq\mathbb C^{n+m},
     \quad V'\simeq\mathbb C, 
     \quad
     \iota = \text{identity map},
     \quad
     \iota'(1)=j
\end{align}
where we fix $j\in \mathcal{I}''=\mathcal{I}$. 
In this case we have:
\begin{align}
    \widetilde \Psi: \mathcal{B}'(\fgl_1)\to \mathcal{B}^+(\fgl_{n|m})
\end{align} 
The role of $H'^{(j)}_k$ is played by the element $H'_k$ which is a $1\times 1$ matrix with the matrix element being the symmetric rational function in $(z_1\ldots z_k)$ which is equal to $1$. Applying $\widetilde\Psi$ to $H'_k$ and using the definition \eqref{eq:Psi tilde} gives:
\begin{align}\label{eq:Psi tilde H'k}
    \widetilde \Psi\left [ H'_k\right]=  \Tr_{k+1\ldots 2k}
\left[
\check{R}_{\omega^k}(q z,z)
\left(\id^{\otimes k}\otimes \left(\iota'^{\otimes k}
H'_k\pi'^{\otimes k}\right)\right)
\right]
\end{align}
The expression $\iota'^{\otimes k}
H'_k$ is equal to $E_{jj}^{\otimes k}$ hence we may write:
\begin{align}\label{eq:Psi tilde H'k X}
    \widetilde \Psi\left [ H'_k\right]=  \Tr_{k+1\ldots 2k}
\left[
\check{R}_{\omega^k}(q z,z)
\left(\id^{\otimes k}\otimes E_{jj}^{\otimes k} \right)
\right]
\end{align}
The right hand side of \eqref{eq:Psi tilde H'k X} coincides with $\Psi\left[H'^{(j)}_k\right]$.  Let $H'(x)$ be the generating function of $H'_k$, then  by \eqref{eq:Psi exp S} we have:
\begin{align}
    \label{eq:Psi tilde exp S}
    \widetilde\Psi\left[H'(x)\right] = \exp \left(\sum_{k=1}^\infty \frac{x^k}{k}\epsilon_j^k\left(t^{-\epsilon_jk/2}S_{k}^{(j-1)}-t^{\epsilon_jk/2}S_{k}^{(j)}\right)\right)
\end{align}

\subsubsection{Computing elements exponentially generated by $S_k^{(j)}$}
In this example we fix $a\in \mathcal{I}''$ and set:
\begin{align}
    V''\simeq\mathbb C^{n+m+1},
     \quad V'\simeq\mathbb C, 
     \quad
         \iota(i) = \begin{cases}
        i & \text{if}\quad i<a\\
        i+1 & \text{if}\quad i\geq a
    \end{cases},
    \qquad
    \iota'(1) = a
\end{align}
The projection $\pi:V''\to V$ removes the $a$-th coordinate. Recall that $V$ has gradation given by $\epsilon_1\ldots \epsilon_{n+m}$ and let $\epsilon'_1\in\{\pm 1\}$ be the grading of the single vector of $V'$. Define the gradation of $V''$ as:
\begin{align}
    \label{eq:epsilon''}
    \epsilon''_i :=\begin{cases}
        \epsilon_i &\text{if}\quad  i<a\\
        \epsilon'_1 & \text{if}\quad i=a\\
        \epsilon_{i-1} & \text{if}\quad i>a
    \end{cases},
    \qquad i=1\ldots n+m+1
\end{align}
The two possible values of $\epsilon'_1$ give us two maps of the form:
\begin{align}
    \widetilde \Psi: \mathcal{B}'(\fgl_1)\to \mathcal{B}^+(\fgl_{n|m})
\end{align}
\begin{prop}
    With the map $\widetilde \Psi$ as constructed above we have:
\begin{align}
    \label{eq:Psi a exp S}
    \widetilde \Psi\left [ H'(x)\right]= \exp\left(\sum_{k=1}^\infty \frac{x^k}{k}\epsilon_1'^{k+1}\left(t^{-k/2}-t^{k/2}\right) S_k^{(a-1)}\right)
\end{align}
\end{prop}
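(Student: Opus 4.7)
The plan is to factor $\widetilde\Psi$ into the full anti-isomorphism $\Psi^{(V'')}$ of Corollary \ref{cor:psi iso} (built from the $R$-matrix of $U_t(\dot\fgl_{n''|m''})$) followed by the coordinate projection $\pi^{\otimes\bullet}(-)\iota^{\otimes\bullet}$. Since $\iota'^{\otimes k}H'_k\pi'^{\otimes k} = E_{aa}^{\otimes k} = H^{''(a)}_k$ inside $V''^{\otimes k}$, Definition \ref{defn:tilde Psi} gives
\[
\widetilde\Psi[H'_k] = \pi^{\otimes k}\,\Psi^{(V'')}[H^{''(a)}_k]\,\iota^{\otimes k}.
\]
Applying \eqref{eq:Psi exp S} in $V''$ (whose $a$-th gradation is $\epsilon''_a = \epsilon'_1$ by \eqref{eq:epsilon''}) then yields, at the level of generating series,
\[
\Psi^{(V'')}[H^{''(a)}(x)] = \exp\!\left(\sum_{k=1}^\infty \frac{x^k}{k}(\epsilon'_1)^k\Big(t^{-\epsilon'_1 k/2} S_k^{''(a-1)} - t^{\epsilon'_1 k/2} S_k^{''(a)}\Big)\right),
\]
the exponential and the $S_k^{''(i)}$ being computed inside the slope-$0$ subalgebra of $\CA^{''+}$.

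Next I would push $\pi^{\otimes\bullet}(-)\iota^{\otimes\bullet}$ through this exponential. By \eqref{eq:proj hom}, this projection intertwines the shuffle products $*''$ and $*$ on tensors that preserve $V^{\otimes\bullet}\subset V''^{\otimes\bullet}$. The $S_k^{''(i)}$ satisfy this preservation: $S_1^{''(i)}$ is diagonal in the standard basis, and (by the same reasoning as in Lemma \ref{lem:H shuffle product}) shuffle products of diagonal tensors remain diagonal, so the recursion \eqref{eqn:power comm introduction} keeps $S_k^{''(i)}$ diagonal, hence subspace-preserving. Thus $\pi^{\otimes\bullet}(-)\iota^{\otimes\bullet}$ restricts to an algebra homomorphism on the slope-$0$ subalgebra generated by the $S_k^{''(i)}$ and commutes with $\exp$, converting the displayed identity into an exponential in the projected generators $\tilde S_k^{(i)}:=\pi^{\otimes k}S_k^{''(i)}\iota^{\otimes k}$.

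It remains to identify $\tilde S_k^{(a-1)}$ and $\tilde S_k^{(a)}$ inside $\CB^+(\fgl_{n|m})$. For $k=1$, the initial condition \eqref{eqn: power sum 1} gives $S_1^{''(a-1)} = \tfrac{-1}{1-q}\text{diag}\{1,\ldots,1,q,q,\ldots,q\}$ (with $a-1$ ones) and $S_1^{''(a)} = \tfrac{-1}{1-q}\text{diag}\{1,\ldots,1,1,q,\ldots,q\}$ (with $a$ ones); deleting the $a$-th entry --- a $q$ in the first case, a $1$ in the second --- produces in both cases $\tfrac{-1}{1-q}\text{diag}\{\underbrace{1,\ldots,1}_{a-1},q,\ldots,q\} = S_1^{(a-1)}$. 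For $k\geq 2$, applying $\pi^{\otimes k}(-)\iota^{\otimes k}$ to the recursion \eqref{eqn:power comm introduction} (which respects shuffle commutators of subspace-preserving tensors) shows that both $\tilde S_k^{(a-1)}$ and $\tilde S_k^{(a)}$ satisfy the recursion that defines $S_k^{(a-1)}$ in $\CA^+(\fgl_{n|m})$, so induction yields $\tilde S_k^{(a-1)} = \tilde S_k^{(a)} = S_k^{(a-1)}$ for all $k\geq 1$. Substituting into the previous display, together with the elementary identity $(\epsilon'_1)^k\big(t^{-\epsilon'_1 k/2} - t^{\epsilon'_1 k/2}\big) = (\epsilon'_1)^{k+1}\big(t^{-k/2} - t^{k/2}\big)$ (verified case by case for $\epsilon'_1 = \pm 1$), produces \eqref{eq:Psi a exp S}. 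The main obstacle in this scheme is the inductive step: tracking that each $S_k^{''(i)}$ stays diagonal so that \eqref{eq:proj hom} applies at every stage, and then observing the mild ``coincidence'' that $S_k^{''(a-1)}$ and $S_k^{''(a)}$ collapse to the same $\fgl_{n|m}$ element after projection, which makes the final answer depend on $S_k^{(a-1)}$ alone.
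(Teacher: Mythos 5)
Your proposal follows the paper's own route essentially step for step: factor $\widetilde\Psi$ as $\Psi$ in the auxiliary algebra followed by $\pi^{\otimes\bullet}(-)\iota^{\otimes\bullet}$, apply \eqref{eq:Psi exp S} to $E_{aa}^{\otimes k}$ in $V''$, push the projection inside the exponential, and then show via the $k=1$ case \eqref{eqn: power sum 1} and the recursion \eqref{eqn:power comm} that both $\pi^{\otimes k}\widetilde S_k^{(a-1)}\iota^{\otimes k}$ and $\pi^{\otimes k}\widetilde S_k^{(a)}\iota^{\otimes k}$ collapse to $S_k^{(a-1)}$, finishing with the sign identity for $\epsilon_1'=\pm1$. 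This is exactly the paper's argument (equations \eqref{eq:Psi tilde H' cont'd}--\eqref{eq:Psi a exp S}).

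One auxiliary claim you make is false, however: shuffle products (and shuffle commutators) of diagonal tensors are \emph{not} diagonal in general, and the $S_k^{''(i)}$ are not diagonal for $k\geq 2$ --- indeed \eqref{eq:S-trace intro} exhibits nonzero matrix elements $S^{(i)}_{\alpha,\beta}$ with $\alpha\neq\beta$ (ordered tuples differing by a permutation), and Lemma \ref{lem:H shuffle product} is a special feature of the elements $H^{(i)}_{k}$ proved via the $\mathrm{lead}/\alpha_i$ characterization, not a closure property of diagonal tensors under $*$. Fortunately the property you actually need in order to invoke \eqref{eq:proj hom} is much weaker: that $S_k^{''(i)}$ preserves the coordinate subspace $V^{\otimes k}\subset V''^{\otimes k}$. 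This follows from colour conservation: $S_1^{''(i)}$ is diagonal, the $R$-matrix \eqref{eq:R-matrix super} only permutes colours ($E_{ii}\otimes E_{jj}$ and $E_{ij}\otimes E_{ji}$ terms), so every shuffle product, commutator, and scalar multiple appearing in the recursion \eqref{eqn:power comm} is colour-conserving and hence maps $V^{\otimes k}$ to itself. Replacing ``diagonal'' by ``colour-conserving'' repairs the argument; to your credit, this is a step the paper's own proof leaves entirely implicit when it writes the projections termwise inside the exponential in \eqref{eq:Psi tilde H' cont'd}.
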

\begin{proof}
Let us compute $\widetilde\Psi\left [ H_k'\right]$ using \eqref{eq:Psi tilde}. We note that $\iota'^{\otimes k}
H_k'=E_{aa}^{\otimes k}$ hence:
\begin{align}\label{eq:Psi tilde H'k - a}
    \widetilde \Psi\left [ H_k'\right]=  \pi^{\otimes k}
    \Tr_{k+1\ldots 2k}
\left[
\left(\check{R}^{(V'')}_{\omega^k}(q z,z)\right)
\left(\id^{\otimes k}\otimes 
E_{aa}^{\otimes k}\right)
\right]\iota^{\otimes k}
\end{align}
The expression between $\pi^{\otimes k}$ and $\iota^{\otimes k}$ is of the form \eqref{eq:Psi tilde H'k X} where $j=a$ and $\widetilde\Psi$ is taken as $\widetilde\Psi: \mathcal{B}'(\fgl_1)\to \mathcal{B}^+(\fgl_{n+1|m})$ or $\widetilde\Psi: \mathcal{B}'(\fgl_1)\to \mathcal{B}^+(\fgl_{n|m+1})$ depending on the sign of $\epsilon_1'$. Let $\widetilde S_k^{(j)}\in \mathcal{B}^+(\fgl_{n+1|m})\subset \CV''$ (or $\widetilde S_k^{(j)}\in \mathcal{B}^+(\fgl_{n|m+1})\subset \CV''$) denote the counterpart of  $S_k^{(j)}\in \mathcal{B}^+(\fgl_{n|m})$. With this notation and \eqref{eq:Psi tilde exp S} we can write the generating function of $\widetilde \Psi[H_k']$ from \eqref{eq:Psi tilde H'k - a} as:
\begin{align}\label{eq:Psi tilde H' cont'd}
    \widetilde\Psi\left[H'(x)\right] = \exp \left(\sum_{k=1}^\infty \frac{x^k}{k}\epsilon_1'^k\left(t^{-\epsilon_1'k/2}\left(\pi^{\otimes k}\widetilde S_{k}^{(a-1)}\iota^{\otimes k} \right)-t^{\epsilon_1'k/2}\left(\pi^{\otimes k}\widetilde S_{k}^{(a)}\iota^{\otimes k} \right)\right)\right)
\end{align}
Using \eqref{eqn: power sum 1} we compute:
\begin{align}\label{eq:S tilde to S at k=1}
\pi\widetilde S_{1}^{(j)}\iota 
=\begin{cases}
    S_{1}^{(j)}  &\text{if}\quad  j< a\\
    S_{1}^{(j-1)}  &\text{if}\quad  j\geq a
\end{cases}
\end{align}
From the recursive formula \eqref{eqn:power comm} with $\ell=1$ we find:
\begin{align}\label{eq:S tilde to S at k=1}
\pi^{\otimes k}\widetilde S_{k}^{(j)}\iota^{\otimes k}
=\begin{cases}
    S_{k}^{(j)}  &\text{if}\quad  j< a\\
    S_{k}^{(j-1)}  &\text{if}\quad  j\geq a
\end{cases}
\end{align}
For $j=a-1,a$ we have:
\begin{align}\label{eq:proj S tilde}
\pi^{\otimes k}\widetilde S_{k}^{(a-1)}\iota^{\otimes k}= S_k^{(a-1)},
\qquad 
\pi^{\otimes k}\widetilde S_{k}^{(a)}\iota^{\otimes k}= S_k^{(a-1)}
\end{align}
After inserting this into \eqref{eq:Psi tilde H' cont'd} we obtain:
\begin{align}
    \label{eq:Psi a exp S}
    \widetilde \Psi\left [ H'(x)\right]= \exp\left(\sum_{k=1}^\infty \frac{x^k }{k}\epsilon_1'^{k+1}\left(t^{-k/2}-t^{k/2}\right) S_k^{(a-1)}\right)
\end{align}
where we used the identity:
\[
\epsilon'_1(t^{-k/2}-t^{k/2})=(t^{-\epsilon'_1k/2}-t^{\epsilon'_1k/2}),
\qquad
\epsilon'_1 \in \{\pm 1\}
\]
\end{proof}
\subsubsection{Computing the elements $S_k^{(i)}\in\mathcal{B}^+$.}
In this example we derive a trace formula for the elements $S_k^{(i)}$. The derivation is based on the following. Suppose we have an identity:
\begin{align}\label{eq:exp S}
\sum_{k=0}^\infty G^{(i)}_k(u) v^k = 
\exp\left(\sum_{k=1}^\infty \frac{v^k}{k}(u^k-1) S^{(i)}_k\right)
\end{align}
where $G^{(i)}_k(u)$ is given, then we can compute $S^{(i)}_k$ using:
\begin{align}\label{eq:S-G}
S^{(i)}_k=\frac{d}{du}G^{(i)}_k(u)\Big{|}_{u=1},
\qquad k>0
\end{align}

Let us construct a map $\widetilde\Psi$ which will produce such elements $G^{(i)}_k(u)$. We fix $a\in \mathcal{I}''$ and set:
\begin{align}\label{eq:Psi tilde setup}
    V''\simeq\mathbb C^{n+m+2},
     \quad V'\simeq\mathbb C^2, 
     \quad
         \iota(i) = \begin{cases}
        i & \text{if}\quad i<a\\
        i+2 & \text{if}\quad i\geq a
    \end{cases},
    \qquad
    \iota'(1) = a, \quad
    \iota'(2) = a+1
\end{align}
The projection $\pi:V''\to V$ removes the $a$-th and $a+1$-st coordinates and $\pi':V''\to V'$ is the complementary projection. The gradation of $V'$ is fixed to be:
\begin{align}
    \label{eq:epsilons12}
\epsilon'_1=1, \quad \epsilon'_2=-1
\end{align}
then the gradation of $V''$ is:
\begin{align}
    \label{eq:epsilon''}
    \epsilon''_i :=\begin{cases}
        \epsilon_i &\text{if}\quad  i<a\\
        \epsilon'_1=1 & \text{if}\quad i=a\\
        \epsilon'_2=-1 & \text{if}\quad i=a+1\\
        \epsilon_{i-2} & \text{if}\quad i>a+1
    \end{cases}
\end{align}
With this data we have a map $\widetilde \Psi$ of the form:
\begin{align}
    \widetilde \Psi: \mathcal{B}'(\fgl_{1|1})\to \mathcal{B}^+(\fgl_{n|m})
\end{align}
\begin{prop}
    With the map $\widetilde \Psi$ as constructed above we have:
    \begin{align}
        \label{eq:Psi tilde maps HH}
    S_k^{(a-1)}=    \frac{1}{t^{-k/2}-t^{k/2}}
    \widetilde\Psi \left(\sum_{l=1}^k (-1)^{l+1} lH_{k-l}'^{(1)}*'H_{l}'^{(2)}\right)
    \end{align}
\end{prop}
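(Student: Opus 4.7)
The plan is to compute the two-variable generating function $\widetilde{\Psi}[H'^{(1)}(x)*'H'^{(2)}(y)]$ in closed form as an exponential in the generators $S_k^{(a-1)}$, and then specialize $x=v$, $y=-uv$ so as to apply the derivative-at-$u=1$ device of \eqref{eq:exp S}--\eqref{eq:S-G} and read off $S_k^{(a-1)}$ from the coefficient of $v^k$. The two-dimensional choice $V'\simeq\BC^{1|1}$ of \eqref{eq:Psi tilde setup}--\eqref{eq:epsilons12} is tailored precisely so that $\widetilde{\Psi}$ applied to $H'^{(1)}(v)*'H'^{(2)}(-uv)$ reproduces the combination $1-u^k$ demanded by \eqref{eq:S-G}.

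\emph{Reducing to the anti-isomorphism on a larger shuffle algebra.} By the $\CA'(\fgl_{1|1})$-version of Remark \ref{rmk:H shuffle product in A prime} together with its counterpart in $\CA'(\fgl_{n+1|m+1})$, the tensor $\iota'^{\otimes k}(H'^{(1)}_{k_1}*'H'^{(2)}_{k_2})\pi'^{\otimes k}$ and the tensor $H''^{(a)}_{k_1}*''H''^{(a+1)}_{k_2}$ (where $H''^{(i)}_k:=E_{ii}^{\otimes k}$ and $*''$ is the shuffle product of $\CA'(\fgl_{n+1|m+1})$) are both equal to the same sum over sequences in $\{a,a+1\}^{k_1+k_2}$ with prescribed multiplicities, hence they coincide in $\CV''$. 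Substituting into \eqref{eq:Psi tilde} rewrites $\widetilde{\Psi}$ on such elements as $\pi^{\otimes k}\bullet\iota^{\otimes k}$ composed with the honest anti-isomorphism $\Psi^{(V'')}\colon\CA'(\fgl_{n+1|m+1})\to\CA^+(\fgl_{n+1|m+1})$ of Corollary \ref{cor:psi iso}. Because both $\Psi^{(V'')}[H''^{(a)}(x)]$ and $\Psi^{(V'')}[H''^{(a+1)}(y)]$ lie in the commutative $\CB^+(\fgl_{n+1|m+1})$, the anti-multiplicativity collapses to plain multiplicativity, and Theorem \ref{thm:Psi H} applied to $\fgl_{n+1|m+1}$ with $\epsilon''_a=1$ and $\epsilon''_{a+1}=-1$ gives closed exponential expressions for these two factors in the generators $\widetilde{S}_k^{(a-1)}$, $\widetilde{S}_k^{(a)}$, $\widetilde{S}_k^{(a+1)}$ of $\CB^+(\fgl_{n+1|m+1})$.

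\emph{Projecting to $\CB^+(\fgl_{n|m})$.} I next prove, by induction on $k$ (base $k=1$ by diagonality of $\widetilde{S}_1^{(j)}$; induction step via the recursion \eqref{eqn:power comm introduction} together with the fact that the shuffle product of two tensors preserving $V^{\otimes k}\subset V''^{\otimes k}$ is again subspace-preserving, a consequence of \eqref{eq:proj hom}), that each $\widetilde{S}_k^{(j)}$ preserves $V^{\otimes k}$. Granted this, \eqref{eq:proj hom} forces $\pi^{\otimes}\bullet\iota^{\otimes}$ to commute with both $*$ and $\exp$ on the relevant exponentials. The analogue of \eqref{eq:S tilde to S at k=1} in the present setup, checked at $k=1$ by writing out the diagonals and extended to all $k$ via the same recursion \eqref{eqn:power comm introduction}, reads
\[
\pi^{\otimes k}\widetilde{S}_k^{(j)}\iota^{\otimes k} = S_k^{(a-1)} \qquad\text{for } j\in\{a-1,\,a,\,a+1\}.
\]
Substituting this into the projected exponentials and multiplying them together, all six appearances of $\widetilde{S}_k^{(\cdot)}$ collapse onto $S_k^{(a-1)}$, yielding
\[
\widetilde{\Psi}[H'^{(1)}(x)*'H'^{(2)}(y)] = \exp\left(\sum_{k\geq 1}\frac{x^k-(-y)^k}{k}(t^{-k/2}-t^{k/2})\,S_k^{(a-1)}\right).
\]

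\emph{Extracting $S_k^{(a-1)}$.} Specializing $x=v$, $y=-uv$ turns the right-hand side into $\exp\bigl(\sum_{k\geq 1}\tfrac{v^k(1-u^k)}{k}(t^{-k/2}-t^{k/2})S_k^{(a-1)}\bigr)$, whose exponent vanishes at $u=1$; its $u$-derivative there is $\sum_{k\geq 1}v^k(t^{k/2}-t^{-k/2})S_k^{(a-1)}$. The $u$-derivative of the left-hand side at $u=1$, read off coefficient-wise in $v^k$, is $\widetilde{\Psi}\bigl[\sum_{l=1}^k l(-1)^l H'^{(1)}_{k-l}*'H'^{(2)}_l\bigr]$. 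Equating the two and multiplying through by $-1$ yields \eqref{eq:Psi tilde maps HH}. The main obstacle is the projection step: one has to verify that $\pi^{\otimes}\bullet\iota^{\otimes}$ intertwines the shuffle products of $\CA^+(\fgl_{n+1|m+1})$ and $\CA^+(\fgl_{n|m})$ on these particular exponentials, which reduces to the subspace-preservation of every $\widetilde{S}_k^{(j)}$ that is established inductively from \eqref{eqn:power comm introduction}.
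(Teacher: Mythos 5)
Your proposal is correct and follows essentially the same route as the paper: it identifies $\iota'^{\otimes}(H'^{(1)}_{k_1}*'H'^{(2)}_{k_2})\pi'^{\otimes}$ with the corresponding $\fgl_{n+1|m+1}$ shuffle product (the paper's \eqref{eq:E11E22}) so that $\widetilde\Psi$ factors through the honest anti-isomorphism and becomes multiplicative on these elements, applies Theorem \ref{thm:Psi H} together with the projection identities $\pi^{\otimes k}\widetilde S_k^{(j)}\iota^{\otimes k}=S_k^{(a-1)}$, and extracts $S_k^{(a-1)}$ via the $u$-derivative at $u=1$. The only (welcome) difference is that you make explicit the inductive verification that each $\widetilde S_k^{(j)}$ preserves the coordinate subspace $V^{\otimes k}$, which the paper leaves implicit when pulling the projection inside the exponential.
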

\begin{proof}
Let us apply the map $\widetilde\Psi$ to $H'^{(1)}_k$ and $H'^{(2)}_k$. Using \eqref{eq:Psi tilde} and $\iota'$ from \eqref{eq:Psi tilde setup} we can write:
\begin{align}\label{eq:Psi tilde H'k1}
    &\widetilde \Psi\left [ H'^{(1)}_k\right]=  \pi^{\otimes k}
    \Tr_{k+1\ldots 2k}
\left[
\left(\check{R}^{(V'')}_{\omega^k}(q z,z)\right)
\left(\id^{\otimes k}\otimes 
E_{aa}^{\otimes k}\right)
\right]\iota^{\otimes k}\\
\label{eq:Psi tilde H'k2}
    &\widetilde \Psi\left [ H'^{(2)}_k\right]=  \pi^{\otimes k}
    \Tr_{k+1\ldots 2k}
\left[
\left(\check{R}^{(V'')}_{\omega^k}(q z,z)\right)
\left(\id^{\otimes k}\otimes 
E_{a+1a+1}^{\otimes k}\right)
\right]\iota^{\otimes k}
\end{align}
Next we compute the generating functions of \eqref{eq:Psi tilde H'k1} and \eqref{eq:Psi tilde H'k2} similarly to \eqref{eq:Psi tilde H' cont'd}:
\begin{align}
\label{eq:Psi tilde H'1 }
    \widetilde\Psi\left[H'^{(1)}(x)\right] &= \exp \left(\sum_{k=1}^\infty \frac{x^k}{k}\epsilon_1'^k\left(t^{-\epsilon_1'k/2}\left(\pi^{\otimes k}\widetilde S_{k}^{(a-1)}\iota^{\otimes k} \right)-t^{\epsilon_1'k/2}\left(\pi^{\otimes k}\widetilde S_{k}^{(a)}\iota^{\otimes k} \right)\right)\right)\\
\label{eq:Psi tilde H'2}
    \widetilde\Psi\left[H'^{(2)}(x)\right] &= \exp \left(\sum_{k=1}^\infty \frac{x^k}{k}\epsilon_2'^k\left(t^{-\epsilon_2'k/2}\left(\pi^{\otimes k}\widetilde S_{k}^{(a)}\iota^{\otimes k} \right)-t^{\epsilon_2'k/2}\left(\pi^{\otimes k}\widetilde S_{k}^{(a+1)}\iota^{\otimes k} \right)\right)\right)
\end{align}
where $\widetilde S_k^{(j)}\in \mathcal{B}^+(\fgl_{n+1|m+1})\subset \CV''$ denotes the counterpart of  $S_k^{(j)}\in \mathcal{B}^+(\fgl_{n|m})$. By the same logic leading to \eqref{eq:proj S tilde} we have:
\begin{align}\label{eq:proj S tilde again}
\pi^{\otimes k}\widetilde S_{k}^{(a-1)}\iota^{\otimes k}= S_k^{(a-1)},
\qquad 
\pi^{\otimes k}\widetilde S_{k}^{(a)}\iota^{\otimes k}= S_k^{(a-1)}
,
\qquad 
\pi^{\otimes k}\widetilde S_{k}^{(a+1)}\iota^{\otimes k}= S_k^{(a-1)}
\end{align}
This gives us the following formulas:
\begin{align}
\label{eq:Psi tilde H'1 exp}
    \widetilde\Psi\left[H'^{(1)}(x)\right] &= \exp \left(\sum_{k=1}^\infty \frac{x^k}{k}\left(t^{-k/2}-t^{k/2}\right)S_k^{(a-1)}\right)\\
\label{eq:Psi tilde H'2 exp}
    \widetilde\Psi\left[H'^{(2)}(x)\right] &= 
      \exp \left(\sum_{k=1}^\infty \frac{(-1)^{k+1}x^k}{k}\left(t^{-k/2}-t^{k/2}\right)S_k^{(a-1)}\right)
\end{align}
where we substituted $\epsilon'_1=1$ and $\epsilon'_2=-1$. For our final step we need to show that the map $\widetilde \Psi$ satisfies: 
\begin{align}
    \label{eq:Psi tilde Hk Hl}
    \widetilde\Psi\left[H'^{(1)}_k*'H'^{(2)}_l\right]=
    \widetilde\Psi\left[H'^{(1)}_k\right]*\widetilde\Psi\left[H'^{(2)}_l\right]
\end{align}

Let $V_{(a^k,(a+1)^l)}\subset V''^{\otimes (k+l)}$ be the coordinate subspace which is spanned by the vectors:
$$
\ket{\sigma(a\ldots a,a+1\ldots a+1},
\qquad \forall\sigma\in \mathfrak S_{k+l}/\mathfrak S_l\times \mathfrak S_k
$$
where $a$'s appear $k$ times and $a+1$'s appear $l$ times. Consider the expression:
\begin{align}\label{eq:Ea^k Ea+1^l}
    E^{\otimes k}_{aa}*'E^{\otimes l}_{a+1a+1} \in \mathcal{B}'(\fgl_{n+1|m+1})    
\end{align}
where $*'$ is the shuffle product of the shuffle algebra $\mathcal{A}'(\fgl_{n+1|m+1})$. Because of the presence of the projectors $E^{\otimes k}_{aa}$ and $E^{\otimes l}_{a+1a+1}$ the tensor \eqref{eq:Ea^k Ea+1^l} has non-zero matrix elements only in the subspace $V_{(a^k,(a+1)^l)}$. This can be seen by looking at the definitions \eqref{eq:A*'B-Rcheck} and \eqref{eq:Gamma_prime} and noting that the tensor:
\[
  (E^{\otimes k}_{aa}\otimes \id^{\otimes l})
    \check R^\bullet_{\omega^{(k+l)}}
    (E^{\otimes l}_{a+1a+1}\otimes \id^{\otimes k}) 
\]
has non-zero matrix elements only in the subspace $V_{(a^k,(a+1)^l)}$ and left and right multiplication of this tensor by $\check R_\sigma$ preserves this property. 

This argument leads us to the following equation:
\begin{align}\label{eq:E11E22}
    \iota'^{\otimes (k+l)}\left(E^{\otimes k}_{11}*'E^{\otimes l}_{22} \right)\pi'^{\otimes (k+l)}= E^{\otimes k}_{aa}*'E^{\otimes l}_{a+1a+1} \in \mathcal{B}'(\fgl_{n+1|m+1})
\end{align}
where on the left hand side $*'$ is the shuffle product of the shuffle algebra $\mathcal{A}'(\fgl_{1|1})$ and on the right hand side $*'$ is the shuffle product of the shuffle algebra $\mathcal{A}'(\fgl_{n+1|m+1})$. Using \eqref{eq:E11E22} we can write the left hand side of \eqref{eq:Psi tilde Hk Hl} as:
\begin{align}\label{eq:Psi tilde HH}
        \widetilde\Psi\left[H'^{(1)}_k*'H'^{(2)}_l\right]=  \pi^{\otimes N}\left(
    \Tr_{N+1\ldots 2N}
\left[
\left(\check{R}^{(V'')}_{\omega^N}(q z,z)\right)
\left( \id_{V''}^{\otimes N} \otimes \left(
H'^{(a)}_k*'H'^{(a+1)}_l\right)\right)
\right]\right)\iota^{\otimes N}
\end{align}
where $N=k+l$. The expression on the right hand side of \eqref{eq:Psi tilde HH} respects the shuffle algebras' multiplication (see Remark \ref{rmk:Psi pi map}). Hence the right hand side of \eqref{eq:Psi tilde HH} factorizes and we obtain \eqref{eq:Psi tilde Hk Hl}. 

With \eqref{eq:Psi tilde H'1 exp}, \eqref{eq:Psi tilde H'2 exp} and \eqref{eq:Psi tilde Hk Hl} we can compute $\widetilde\Psi$ of a product of two generating functions:
\begin{align}\label{eq:Psi tilde H1H2}
    \widetilde\Psi\left[H'^{(1)}(x)H'^{(2)}(-ux)\right]
    =    \exp\left(\sum_{k=1}^\infty \frac{-x^k}{k}\left(t^{-k/2}-t^{k/2}\right)\left(u^k-1\right) S_k^{(a-1)}\right)
\end{align}
This expression is of the form \eqref{eq:exp S} with $x$ playing the role of $v$. Therefore we can use \eqref{eq:S-G}:
\begin{align}\label{eq:Psi tilde = S}
    \widetilde\Psi\left[\sum_{l=0}^k (-1)^l l H_{k-l}'^{(1)}*'H_{l}'^{(2)}\right]
    =  - \left(t^{-k/2}-t^{k/2}\right)S_k^{(a-1)}
\end{align}
After a simple rearrangement we get \eqref{eq:Psi tilde maps HH}.
\end{proof}
Using Remark \ref{rmk:H shuffle product in A prime} compute $H_{k-l}'^{(1)}*'H_{l}'^{(2)}$ in terms of matrix units. After inserting the result into \eqref{eq:Psi tilde maps HH} and shifting the index $a\rightarrow a+1$ we obtain a trace formula for $S_k^{(a)}$:
\begin{align}\label{eq:S trace}
    S_k^{(a)}= &\frac{-1}{t^{-k/2}-t^{k/2}} \nonumber\\
    \times &
  \pi^{\otimes k}
    \Tr_{k+1\ldots 2k}
\left[
\check{R}^{(V'')}_{\omega^k}(q z,z)
\left( \id^{\otimes k} \otimes\sum_{\lambda\in\{a+1,a+2\}^k}(-1)^{m_{a+2}(\lambda)}m_{a+2}(\lambda)\bigotimes_{i\in \lambda} E_{ii}\right)
\right]\iota^{\otimes k}
\end{align}
where the trace is taken over $k$ copies of $V''$. In the graphical form this equation corresponds to \eqref{eq:S-trace intro} from the Introduction.


\appendix
\section{A trace identity from crossing unitarity}\label{app:trace}
In this section we derive an identity related to $R$-matrices of $\uu$ and their crossing unitarity property \eqref{eq:crossing_unitarity}. This identity involves tensors $F\in \normalfont{\text{End}}(V^{\otimes l})$ and $G\in \normalfont{\text{End}}(V^{\otimes k})$, for $l,k>0$, which are not necessarily elements of shuffle algebras. The spectral parameters will not change as they pass through these tensors, hence their diagrams look as follows:
\begin{align}\label{eq:A_graph_no_q}
F(z_1\ldots z_l) = ~
\begin{tikzpicture}[scale=0.5,baseline=(current  bounding  box.center)]
\draw[invarrow=0.4] (1,0) -- (1,0.5);
\draw[invarrow=0.4] (2,0) -- (2,0.5);
\draw[invarrow=0.6] (1,1.5) -- (1,2);
\draw[invarrow=0.6] (2,1.5) -- (2,2);
\draw (0.5,0.5) -- (0.5,1.5) -- (2.5,1.5) -- (2.5,0.5) -- (0.5,0.5);
\node at (1.5,1) {$\scriptstyle F$};
\node[below] at (1,0) {$\scriptstyle z_1$};
\node[below] at (2,0) {$\scriptstyle z_k$};
\node at (1.5,0) {$\scriptstyle \ldots$};
\node[above] at (1,2) {$\scriptstyle z_1$};
\node[above] at (2,2) {$\scriptstyle z_k$};
\node at (1.5,2) {$\scriptstyle \ldots$};
\end{tikzpicture}
\end{align}

\begin{lem}\label{lem:traces}
Let $k,l$ be two positive integers.
Let $z=(z_1\ldots z_k)$, $w=(w_1\ldots w_l)$ and $\omega^j\in \mathfrak{S}_{k+l}$ a $j-$cycle. 
For two tensors $F\in \normalfont{\text{End}}(V^{\otimes l})$ and $G\in \normalfont{\text{End}}(V^{\otimes k})$ we have:
\begin{align}
    \label{eq:trace_identity}
    \Tr \left[ F\otimes G\right ]
    &= 
    \Tr \left[
    \check R^\bullet_{\omega^k}(w,z)
    F_{1\ldots l}
    \check R_{\omega^l}(z,w)
    G_{1\ldots k}
    \right]
\end{align}
with the trace taken over $V^{\otimes (k+l)}$.
\end{lem}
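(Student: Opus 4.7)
\textbf{Proof plan for Lemma~\ref{lem:traces}.}

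The identity is a closed-braid generalization of crossing unitarity \eqref{eq:crossing_unitarity}, and the plan is to prove it by induction on $l$, using crossing unitarity to trace out one $w$-strand at a time. The key consequence of \eqref{eq:crossing_unitarity} that I will use is the partial-trace identity
\begin{equation}
\label{eqn:plan key}
\Tr_2\bigl[\check R^{\bullet}_{12}(x)\, X_2\, \check R_{12}(x^{-1})\bigr] \;=\; \Tr[X]\cdot \id_1,\qquad X\in\normalfont{\text{End}}(V),
\end{equation}
obtained from \eqref{eq:crossing_unitarity} via the standard partial-transpose/partial-trace duality. This is the tool that collapses a matched $\check R/\check R^{\bullet}$ pair around a single traced strand.

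Diagrammatically, consider the RHS of \eqref{eq:trace_identity} drawn with $k+l$ strands closed up by the trace. The strand carrying spectral parameter $w_l$ at the bottom is routed from bottom position $l$ through $\check R_{\omega^l}(z,w)$ up to intermediate position $k+l$, and then back down through $\check R^{\bullet}_{\omega^k}(w,z)$. Crucially, because $F_{1\ldots l}$ acts only on intermediate positions $1,\ldots,l$, this $w_l$-strand passes around $F$ without any interaction. Its $\check R$ crossings with the $z_1,\ldots,z_k$ strands (inside $\check R_{\omega^l}$) are matched one-to-one by its $\check R^{\bullet}$ crossings (inside $\check R^{\bullet}_{\omega^k}$), and the corresponding spectral parameters $w_l/z_j$ and $z_j/w_l$ are reciprocals, as required by \eqref{eqn:plan key}. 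I would use repeated Yang-Baxter moves \eqref{eq:YB} (all performed on the side of $F$ not containing the $w_l$-strand) to bring each matched $\check R/\check R^{\bullet}$ pair adjacent, then apply \eqref{eqn:plan key} to trace out that strand. After $k$ such applications the entire $w_l$-strand is eliminated and the expression reduces to the analogous identity with $l$ replaced by $l-1$.

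The base case $l=1$ is then handled by the same $k$-fold application of \eqref{eqn:plan key}; the trivial degenerate case $l=0$ (where $\check R_{\omega^0}=\id$) directly gives $\Tr[F\otimes G]=\Tr[F]\cdot\Tr[G]$. Induction on $l$ concludes the proof. The main technical obstacle is the bookkeeping of the Yang-Baxter rearrangements in the presence of the arbitrary operator $F$: since $F$ does not commute past the $\check R$-matrices, the rearrangements must be organized so that at each step the two crossings to be paired by \eqref{eqn:plan key} can be brought adjacent without passing any crossing through $F$. The cyclic structure of $\omega^l$ and $\omega^k$ together with the cyclicity of the trace make this possible; the topological isolation of the $w_l$-strand from the box $F$ is the crucial geometric fact that powers the induction.
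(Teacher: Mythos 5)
Your overall strategy --- induction on $l$, peeling off the $w_l$-strand by applying crossing unitarity once for each of its $k$ crossing pairs with the $z$-strands --- is exactly the route the paper takes in Appendix \ref{app:trace}. However, one claim in your plan is false as stated, and it leaves the induction step incomplete: you assert that the $w_l$-strand ``passes around $F$ without any interaction.'' It does not. In the trace diagram \eqref{eq:Y-graph} the $w_l$-strand is precisely the $l$-th leg of $F$: it exits $F$ at the top, carries all $2k$ crossings on its excursion around the trace closure, and re-enters $F$ at the bottom. What is true (and what actually powers the induction) is that the \emph{arc} of this strand carrying the crossings does not re-enter $F$. Consequently, once the $k$ matched $\check R/\check R^\bullet$ pairs are removed, that arc becomes a cap--cup closing the $l$-th leg of $F$ on itself, so the expression reduces to the same identity with $F$ replaced by the partial trace of $F$ over its $l$-th tensor factor (the paper's $\tilde F_{1\ldots l-1}=\Tr_{k+l}F_{1\ldots l-1,k+l}$) --- not with $F$ itself. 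Your induction step must record this replacement and then close with the easy observation $\Tr[(\Tr_{l}F)\otimes G]=\Tr[F]\,\Tr[G]=\Tr[F\otimes G]$. If the $w_l$-strand genuinely did not interact with $F$, removing its crossings would instead produce a spurious free loop, i.e.\ a factor of $\dim V$, and the identity would fail.

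Two smaller points of comparison with the paper's argument. First, no Yang--Baxter moves are needed: because the crossings in $\check R_{\omega^l}(z,w)$ and $\check R^\bullet_{\omega^k}(w,z)$ are nested, the decomposition \eqref{eq:R-rec}--\eqref{eq:Rb-rec} already places each matched pair $\check R^\bullet_{k+l-1}(z_k/w_l)\,(\cdots)\,\check R_{k+l-1}(w_l/z_k)$ adjacently; the only bookkeeping required is a conjugation by $P_{k+l-1}$ to move the intervening operator off the strand being eliminated, followed by a partial transpose $T_{k+l}$ so that \eqref{eq:crossing_unitarity} applies literally. Second, your packaged partial-trace identity is the right tool in spirit, but in the actual computation an arbitrary operator $Y$ (the remaining crossings together with $F$ and $G$) sits \emph{between} the two members of each pair; what you need is the slightly stronger statement $\Tr\left[\cdots\check R^\bullet(x)P\,Y\,P\check R(x^{-1})\cdots\right]=\Tr\left[\cdots Y\cdots\right]$ whenever $Y$ does not act on the strand being traced out, which is exactly what the paper establishes in \eqref{eq:trace_proof_b}--\eqref{eq:trace_proof_c}. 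With the partial-trace correction above and this refinement of the key identity, your plan becomes the paper's proof.
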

\begin{proof}
We provide a recursive proof of this identity starting from the right hand side and showing how the $\check R$ matrices can be removed with the help of the crossing unitarity. The sequence of algebraic manipulations is guided by the graphical representation. 

We note that $\omega^k=\omega^{-l}$. Let $\tilde\omega^{\pm (l-1)}\in \mathfrak{S}_{k+l-1}$ and $\tilde w=(w_1\ldots w_{l-1})$ denote the counterparts of $\omega^{\pm l}$ and the alphabet $w$ in which $l$ is decreased by $1$. Denote the right hand side of \eqref{eq:trace_identity} by $Y$
\begin{align}
    \label{eq:Y}
    Y
    = 
    \Tr \left[
    \check R^\bullet_{\omega^k}(w,z)
    F_{1\ldots l}
    \check R_{\omega^l}(z,w)
    G_{1\ldots k}
    \right]
\end{align}
Graphically we have:
\begin{align}\label{eq:Y-graph}
Y 
=\quad
\begin{tikzpicture}[scale=0.5,baseline=(current  bounding  box.center)]
\rcheck{1}{0}
\rcheck{2}{1}
\rcheck{0}{1}
\rcheck{1}{2}
\node at (1.5,0.5) {$\scriptstyle\bullet$};
\node at (2.5,1.5) {$\scriptstyle\bullet$};
\node at (0.5,1.5) {$\scriptstyle\bullet$};
\node at (1.5,2.5) {$\scriptstyle\bullet$};
\draw (-0.5,3) -- (-0.5,4) -- (1.5,4) -- (1.5,3) -- (-0.5,3);
\draw (0,2) -- (0,3);
\draw (0,0) -- (0,1);
\node at (0.5,3.5) {$\scriptstyle F$};
\rcheck{1}{4}
\rcheck{2}{5}
\rcheck{0}{5}
\rcheckarrows{1}{6}
\node[left] at (0.2,8.3) {$\scriptstyle z_1$};
\node at (0.27,8.15) {$\scriptstyle \ldots$};
\node[left] at (1.3,8.3) {$\scriptstyle z_k$};
\node[left] at (0.2,4.3) {$\scriptstyle w_1$};
\node at (0.27,4.16) {$\scriptstyle \ldots$};
\node[left] at (1.33,4.3) {$\scriptstyle w_l$};
\draw (-0.5,7) -- (-0.5,8) -- (1.5,8) -- (1.5,7) -- (-0.5,7);
\node at (0.5,7.5) {$\scriptstyle G$};
\draw (0,6) -- (0,7);
\draw (0,4) -- (0,5);
\draw (2,3) -- (2,4);
\draw (3,2) -- (3,5);
\draw[arrow=0.25] (0,6) -- (1,5);
\draw[arrow=0.25] (3,6) -- (2,5);
\draw [rounded corners=5pt] (3,6) -- (4,6) -- (4,1) -- (3,1);
\draw [rounded corners=5pt] (2,7) -- (5,7) -- (5,0) -- (2,0);
\draw [rounded corners=5pt] (1,8) -- (1,8.5) -- (6,8.5) -- (6,-0.5) -- (1,-0.5) -- (1,0);
\draw [rounded corners=5pt] (0,8) -- (0,9) -- (7,9) -- (7,-1) -- (0,-1) -- (0,0);
\end{tikzpicture}
\quad =\quad
\begin{tikzpicture}[scale=0.5,baseline=(current  bounding  box.center)]
\draw (0,4) -- (0,5) -- (2,5) -- (2,4) -- (0,4);
\node at (1,4.5) {$\scriptstyle G$};
\draw [rounded corners=5pt] (1.5,4) -- (1.5,3.5) -- (2.5,3.5) -- (2.5,5.5) -- (1.5,5.5) -- (1.5,5);
\draw [rounded corners=5pt] (0.5,4) -- (0.5,3) -- (3,3) -- (3,6) -- (0.5,6) -- (0.5,5);
\draw (0,0) -- (0,1) -- (2,1) -- (2,0) -- (0,0);
\node at (1,0.5) {$\scriptstyle F$};
\draw [rounded corners=5pt] (1.5,0) -- (1.5,-0.5) -- (2.5,-0.5) -- (2.5,1.5) -- (1.5,1.5) -- (1.5,1);
\draw [rounded corners=5pt] (0.5,0) -- (0.5,-1) -- (3,-1) -- (3,2) -- (0.5,2) -- (0.5,1);
\end{tikzpicture}
\end{align}
where the first equality represents \eqref{eq:Y} and the second equality represents the equation which we need to prove. 
Suppose \eqref{eq:trace_identity} holds when $l$ is replaced by $l-1$ and the tensor $F$ is replaced by any tensor in $\normalfont{\text{End}}(V^{\otimes {(l-1)}})$. Let us show that:
\begin{align}\label{eq:Y-induction}
        Y
    = 
    \Tr \left[
    \check R^\bullet_{\tilde\omega^{-(l-1)}}(\tilde w,z)
    \left(\Tr_{k+l} 
    F_{1\ldots l-1,k+l}\right)
    \check R_{\tilde \omega^{(l-1)}}(z,\tilde w)
    G_{1\ldots k}
    \right]
\end{align}
with the outer trace taken over $V^{\otimes (k+l-1)}$. This will give us a proof by induction.

We can write the two blocks $\check R_{\omega^l}(z,w)$ and $\check  R^\bullet_{\omega^{-l}}(w,z)$ in \eqref{eq:Y}
as follows:
\begin{align}\label{eq:R-rec}
&\check  R_{\omega^{l}}(z,w) =   
\check R_{l}(w_l/z_1)\cdots \check  R_{k+l-1}(w_l/z_k)\times
\check R_{\tilde\omega^{l-1}}(z,\tilde w)
\\
\label{eq:Rb-rec}
&\check  R^{\bullet}_{\omega^{-l}}(w,z) = 
\check  R^{\bullet}_{\tilde\omega^{-(l-1)}}(\tilde w,z)
\times
\check R^{\bullet}_{k+l-1}(z_k/w_l) \cdots    
\check  R^{\bullet}_{l}(z_1/w_l) 
\end{align}
The individual matrices $\check R_{l+i-1}(w_l/z_{i-l+1})$ and $\check R_{k+l-i}^\bullet(z_{k-i+1}/w_l)$, $i=1\ldots k$, in these two expressions correspond to the crossings associated with the ``innermost'' vector space carrying the parameter $w_l$ in the middle term in \eqref{eq:Y-graph}. Insert \eqref{eq:R-rec} and \eqref{eq:Rb-rec} into \eqref{eq:Y}:
\begin{align}\label{eq:trace_proof_a}
    Y
    =
    \Tr \left[\cdots 
    \check R^{\bullet}_{k+l-1}(z_k/w_l)
    X^{(k,l)}  
    \check  R_{k+l-1}(w_l/z_k) 
    \cdots   \right]
\end{align}
where $\cdots$ contain tensors acting on the spaces $1\ldots k+l-1$ and $X^{(k,l)}$ is given by: 
\begin{align}\label{eq:X}
    X^{(k,l)}=X^{(k,l)}_{1\ldots k+l-1} := 
    \check R^{\bullet}_{k+l-2}(z_{k-1}/w_l)\cdots \check  R^{\bullet}_{l}(z_1/w_l)
F_{1\ldots l}
    \check  R_{l}(w_l/z_1) \cdots    \check  R_{k+l-2}(w_l/z_{k-1})
\end{align}
We can replace $X^{(k,l)}_{1\ldots k+l-1}$ with $X^{(k,l)}_{1\ldots k+l-2,k+l}$ using permutation matrices $P_{k+l-1}$ as follows:
\begin{align}\label{eq:trace_proof_b}
    Y
    =
    \Tr \left[\cdots 
    \check R^{\bullet}_{k+l-1}(z_k/w_l) 
    P_{k+l-1} X^{(k,l)}_{1\ldots k+l-2,k+l}P_{k+l-1}
    \check  R_{k+l-1}(w_l/z_k) 
    \cdots   \right]
\end{align}
Next we apply the transposition $T_{k+l}$ inside the trace and compute\footnote{In this computation we used the identity $\Tr\left[F^{T_i} G^{T_i}\right]=\Tr\left[F G\right]$ which holds for any two matrices $F,G\in \text{End}(V^{\otimes N})$ and $i=1\ldots N$.}:
\begin{align}
    Y
    &=
    \Tr \left[\cdots \left(
    \check R^{\bullet}_{k+l-1}(z_k/w_l)
    P_{k+l-1}\right)^{T_{k+l}}\left( X_{1\ldots k+l-2,k+l}P_{k+l-1}\check  R_{k+l-1}(w_l/z_k) \right)^{T_{k+l}} \cdots   \right]
    \nonumber\\
    &=
    \Tr \left[\cdots \left(
    \check R^{\bullet}_{k+l-1}(z_k/w_l)
    P_{k+l-1}\right)^{T_{k+l}}
    \left(P_{k+l-1}\check  R_{k+l-1}(w_l/z_k) \right)^{T_{k+l}}
    \left( X_{1\ldots k+l-2,k+l}\right)^{T_{k+l}} \cdots   \right]
    \nonumber\\
    &=
    \Tr \left[\cdots 
    \left( X_{1\ldots k+l-2,k+l}\right)^{T_{k+l}} \cdots   \right]=
    \Tr \left[\cdots 
     X_{1\ldots k+l-2,k+l} \cdots   \right]
     \label{eq:trace_proof_c}
\end{align}
To go from the second line to the third we used the crossing unitarity \eqref{eq:crossing_unitarity}. 
This computation brings 
\eqref{eq:trace_proof_a} to the form:
\begin{align}\label{eq:trace_proof_d}
    Y
    =
    \Tr \left[\cdots \check R^{\bullet}_{k+l-2,k+l}(z_{k-1}/w_l) X^{(k-1,l)} \check  R_{k+l-2,k+l}(w_l/z_{k-1}) \cdots   \right]
\end{align}
where we used the explicit form of $X^{(k,l)}$ in \eqref{eq:X}. This expression for $Y$ has the same structure as \eqref{eq:trace_proof_a} therefore we can reuse  \eqref{eq:trace_proof_b} and \eqref{eq:trace_proof_c}. Performing this computation in total $k$ times gives us the following expression for $Y$:
\begin{align}\label{eq:Y2}
        Y
    = 
    \Tr \left[
    \check R^\bullet_{\tilde\omega^{-(l-1)}}(\tilde w,z)
    \tilde F_{1\ldots l-1}
    \check R_{\tilde \omega^{(l-1)}}(z,\tilde w)
    G_{1\ldots k}
    \right]
\end{align}
where $\tilde F_{1\ldots l-1}$ equals to a partially traced matrix $F$:
\begin{align*}
    \tilde F_{1\ldots l-1}:= \Tr_{k+l} 
    X^{(1,l)}_{1\ldots l-1,k+l}=\Tr_{k+l} 
    F_{1\ldots l-1,k+l}
\end{align*}
From the graphical point of view this computation corresponds to pulling the innermost vector spaces and removing all its crosses in the middle term of \eqref{eq:Y-graph}:
\begin{align}\label{eq:Y-graph_x}
Y = 
\quad
\begin{tikzpicture}[scale=0.5,baseline=(current  bounding  box.center)]
\rcheck{1}{0}
\rcheck{0}{1}
\node at (1.5,0.5) {$\scriptstyle\bullet$};
\node at (0.5,1.5) {$\scriptstyle\bullet$};
\draw (-0.5,3) -- (-0.5,4) -- (0.5,4) -- (0.5,3) -- (-0.5,3);
\draw (0,2) -- (0,3);
\draw (0,0) -- (0,1);
\node at (0,3.5) {$\scriptstyle \tilde F$};
\rcheck{0}{5}
\rcheckarrows{1}{6}
\node[left] at (0.2,8.3) {$\scriptstyle z_1$};
\node at (0.27,8.16) {$\scriptstyle \ldots$};
\node[left] at (1.28,8.3) {$\scriptstyle z_k$};
\node[left] at (0.2,4.3) {$\scriptstyle \tilde w$};
\draw (-0.5,7) -- (-0.5,8) -- (1.5,8) -- (1.5,7) -- (-0.5,7);
\node at (0.5,7.5) {$\scriptstyle G$};
\draw (0,6) -- (0,7);
\draw (0,4) -- (0,5);
\draw (1,2) -- (1,5);
\draw (2,1) -- (2,6);
\draw[arrow=0.25] (0,6) -- (1,5);
\draw [rounded corners=5pt] (2,7) -- (3,7) -- (3,0) -- (2,0);
\draw [rounded corners=5pt] (1,8) -- (1,8.5) -- (4,8.5) -- (4,-0.5) -- (1,-0.5) -- (1,0);
\draw [rounded corners=5pt] (0,8) -- (0,9) -- (5,9) -- (5,-1) -- (0,-1) -- (0,0);
\end{tikzpicture}
\end{align}
In this computation we derived \eqref{eq:Y-induction} which completes the proof.
\end{proof}

\section{Computing evaluations $\alpha_i$}\label{app:evaluations Psi}
In this Appendix we compute the evaluation $\alpha_i \left( \Psi\left[E_{jj}^{\otimes k}\right]\right)$. The result is summarized in the following Lemma.
\begin{lem}\label{lem:alpha Psi}
We have the following identity:
\begin{align}\label{eq:alpha Psi}
    \alpha_i \left( \Psi\left[E_{jj}^{\otimes k}\right]\right) = 
     \begin{cases}
      \prod_{l=1}^{k}\epsilon_j\frac{ t^{\epsilon_j /2}-q^{l}t^{-\epsilon_j /2}}{1-q^{l}}  
      & i=j\\
q^{k\delta_{i>j}}\prod_{l=1}^{k}\epsilon_j\frac{ t^{\epsilon_j /2}-q^{l-1}t^{-\epsilon_j /2}}{1-q^{l}}
    & i\neq j
     \end{cases}
\end{align}
\end{lem}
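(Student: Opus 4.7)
The proof proceeds by an explicit computation of the iterated residue of $\Psi[E_{jj}^{\otimes k}]$ at the specialization $z_s = q^{s-1}y$ in the residue formula \eqref{eqn:residue (k)}, followed by extraction of the coefficient of $E_{ii}$ from the residual operator $X^{(k)}(y)$. I would first dispose of the base case $k=1$ by direct expansion: from \eqref{eq:Psi}, $\Psi[E_{jj}] = \Tr_{2}[\check R(q^{-1})(\id\otimes E_{jj})]$, and substituting $\check R = RP$ with $R$ as in \eqref{eq:R-matrix super} reduces the trace to $A_{jj}(q^{-1})E_{jj} + \sum_{i\neq j}B_{ij}(q^{-1})E_{ii}$, where $A_{jj}(z)$ and $B_{ij}(z)$ denote the diagonal and crossing scalar entries of $R(z)$. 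Evaluating at $z=q^{-1}$ produces exactly $\epsilon_j(t^{\epsilon_j/2}-qt^{-\epsilon_j/2})/(1-q)$ and $q^{\delta_{i>j}}\epsilon_j(t^{\epsilon_j/2}-t^{-\epsilon_j/2})/(1-q)$ respectively, matching \eqref{eq:alpha Psi} for $k=1$.

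For general $k$, I would start from the trace expression
\[
\Psi[E_{jj}^{\otimes k}](z) = \Tr_{k+1,\ldots,2k}\!\left[\prod_{i,j=1}^k \check R_{k+j-i}\bigl(z_{k-i+1}/(qz_j)\bigr)\,\bigl(\id^{\otimes k}\otimes E_{jj}^{\otimes k}\bigr)\right]
\]
and specialize $z_s = q^{s-1}y$, so that the $(i,j)$-th factor evaluates at argument $q^{k-i-j}$. Precisely the $k-1$ ``antidiagonal'' factors with $i+j=k$ reach the singular value $1$, each supplying a simple pole with residue $(t^{1/2}-t^{-1/2})\id$ by \eqref{eq:R residue}. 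Iterating these residues produces the prefactor $(t^{1/2}-t^{-1/2})^{k-1}$ of \eqref{eqn:residue (k)} and collapses the antidiagonal crossings, while the remaining non-singular $\check R$-factors rearrange via the Yang--Baxter equation and matched-pair cancellations through unitarity \eqref{eq:unitarity} (which supplies the prefactor $\prod_{j=1}^{k-2}f(q^j)^{k-1-j}$), and the trace-collapse identity of Lemma~\ref{lem:traces} reduces the auxiliary-space structure to the desired canonical form. The residual $X^{(k)}(y)$ that emerges is a one-auxiliary-space trace of $\check R$-matrices at arguments $q^{-1},\ldots,q^{-k}$ with $E_{jj}$ inserted, whose $E_{ii}$ coefficient evaluates analogously to the base case but with the shifted arguments, producing the telescoping product \eqref{eq:alpha Psi}.

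The main technical obstacle is the combinatorial bookkeeping of all $k^2$ specialized $\check R$-factors and verifying the exact prefactor $\prod_{j=1}^{k-2}f(q^j)^{k-1-j}$ from the unitarity cancellations. A cleaner strategy, which I would actually prefer to implement, is inductive: peel off a single residue at $z_k = qz_{k-1}$ using the one pole from $\check R_{2k-2}(1)$, reducing $\Psi[E_{jj}^{\otimes k}]$ modulo this residue to a $(k-1)$-variable trace that — after Yang--Baxter and Lemma~\ref{lem:traces} — equals $\Psi[E_{jj}^{\otimes k-1}]$ multiplied by a rational scalar operator. The $E_{ii}$ coefficient of that operator supplies exactly the $l=k$ factor in \eqref{eq:alpha Psi}, and induction on $k$ (with base case verified above) yields the full product. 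The $q^{k\delta_{i>j}}$ factor for $i\neq j$ then accumulates one power of $q^{\delta_{i>j}}$ per induction step, originating from the $z^{\delta_{i<j}}$ power in the off-diagonal entries of the $R$-matrix \eqref{eq:R-matrix super}.
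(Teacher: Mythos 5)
Your primary argument is essentially the paper's own proof: compute the iterated residue of $\Psi\left[E_{jj}^{\otimes k}\right]$ at $z_s=q^{s-1}y$, observe that exactly the $k-1$ factors $\check R_{2k-2i}$ with $i+j=k$ hit the pole at argument $1$ and contribute $(t^{1/2}-t^{-1/2})^{k-1}$, cancel matched pairs by unitarity to produce $\prod_{l=1}^{k-2}f(q^l)^{k-1-l}$, reduce the remaining auxiliary structure to scalars times a single one-space trace, and read off the $E_{ii}$ coefficient of the resulting $X^{(k)}(y)$; your $k=1$ evaluation and the final telescoping product agree with \eqref{eq:Psi Xk}. One correction to the mechanism you cite: the collapse of the traced half is \emph{not} an application of Lemma~\ref{lem:traces} (that identity requires the $\check R^\bullet$ factors of crossing unitarity, which are absent from $\Psi$). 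What actually happens is elementary: since every traced tensor factor carries $E_{jj}$, every remaining cross whose two legs both lie on traced lines is forced to contribute only its $\check R^{jj}_{jj}$ matrix element, i.e.\ the scalars $\epsilon_j\frac{t^{-\epsilon_j/2}-q^{-l}t^{\epsilon_j/2}}{1-q^{-l}}$ for $l=1\ldots k-1$, leaving a single genuine trace $\Tr\left(\check R(q^{-k})(\id\otimes E_{jj})\right)$ whose off-diagonal entries supply the $q^{k\delta_{i>j}}$ in one step (not accumulated per step as you suggest). Your preferred inductive variant (peeling off the single pole at $z_k=qz_{k-1}$ and matching the result to $\Psi\left[E_{jj}^{\otimes(k-1)}\right]$ times a scalar) is plausible but left as a sketch: you would still need to show that the leftover crossings attached to the $k$-th physical and traced lines reorganize into exactly the claimed scalar factor, which is essentially the same bookkeeping you were trying to avoid; the paper does not take this route.
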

\begin{proof}
According to Definition \ref{def:evaluation}, in order to compute $\alpha_i$, we need to evaluate the residue of $\Psi\left[E_{jj}^{\otimes k}\right]$ at:
$$
\left\{z_1 = y, \ldots,  z_{k} = q^{k-1} y\right\}
$$
and express it in the form 
\eqref{eqn:residue (k)}, i.e.:
\begin{equation}\label{eqn:residue (k) Psi}
\underset{\left\{z_1 = y, \ldots,  z_{k} = q^{k-1} y\right\}}{\normalfont{\text{Res}}}\Psi\left[E_{jj}^{\otimes k}\right] =
(t^{\frac 12} - t^{-\frac 12})^{k-1} \prod_{l=1}^{k-2} f \left(q^l \right)^{k-1-l}
 \times \check R_{1} \left(q^{-1}\right) \cdots \check R_{k-1} \left(q^{-k+1}\right) X_k^{(k)}(y)
\end{equation}
This will give us an explicit expression for $X_k^{(k)}(y)$ in which we can take the coefficient of $E_{ii}$ and derive \eqref{eq:alpha Psi}.

For clarity we demonstrate the logic of the computation by considering the case $k=3$. The residue of $\Psi\left[E_{jj}^{\otimes 3}\right]$ is:
\begin{align}\label{eq:res Psi A}
\underset{\left\{z_1 = y, z_2 = q y,  z_{3} = q^2 y\right\}}{\normalfont{\text{Res}}}\Psi\left[E_{jj}^{\otimes 3}\right] 
=\quad 
\underset{\left\{z_1 = y, z_2 = q y,  z_{3} = q^2 y\right\}}{\normalfont{\text{Res}}}
\begin{tikzpicture}[scale=0.5,baseline=(current  bounding  box.center)]
\rcheck{1}{0}
\rcheck{2}{1}
\rcheck{3}{2}
\rcheck{0}{1}
\rcheck{1}{2}
\rcheck{2}{3}
\rcheck{-1}{2}
\rcheck{0}{3}
\rcheck{1}{4}
\draw (2-0.5-0.15,5) -- (4+0.5+0.15,5) -- (4+0.5+0.15,6) -- (2-0.5-0.15,6) -- (2-0.5-0.15,5);
\node at (3,5.5) {$\scriptstyle E_{jj}\otimes \cdots \otimes E_{jj}$};
\draw (3,4) -- (3,5);
\draw (4,3) -- (4,5);
\draw (0,1) -- (0,0);
\draw (-1,2) -- (-1,0);
\draw[invarrow=0.8] (1,5) -- (1,7);
\draw[invarrow=0.85] (0,4) -- (0,7);
\draw[invarrow=0.89] (-1,3) -- (-1,7);
\draw [rounded corners=5pt] (4,2) -- (5,2) -- (5,6.5) -- (4,6.5) -- (4,6);
\draw [rounded corners=5pt] (3,1) -- (6,1) -- (6,7) -- (3,7) -- (3,6);
\draw [rounded corners=5pt] (2,0) -- (7,0) -- (7,7.5) -- (2,7.5) -- (2,6);
\node[above] at (-1,7) {$\scriptstyle q z_1$};
\node[above] at (0,7) {$\scriptstyle q z_{2}$};
\node[above] at (1,7) {$\scriptstyle q z_3$};
\node[below] at (-1,0) {$\scriptstyle z_1$};
\node[below] at (0,0) {$\scriptstyle z_{2}$};
\node[below] at (1,0) {$\scriptstyle z_3$};
\end{tikzpicture}
\end{align}
The trace can be immediately computed because $E_{jj}$ select the $j$-th matrix element in each traced tensor factor. 
The poles in the expression on the right hand side in \eqref{eq:res Psi A} can only arise in the matrices $\check R(z)$ when $z\to 1$. We can compute the corresponding residues using \eqref{eq:R residue graph} which results into replacing some $\check R$ matrices with identity matrices and an overall power of $t^{1/2}-t^{-1/2}$. In the computation below we mark these $\check R$ matrices with circles:
\begin{align}\label{eq:res Psi B}
\underset{\left\{z_1 = y, z_2 = q y,  z_{3} = q^2 y\right\}}{\normalfont{\text{Res}}}
\begin{tikzpicture}[scale=0.5,baseline=(current  bounding  box.center)]
\rcheck{1}{0}
\rcheck{2}{1}
\rcheck{3}{2}
\rcheck{0}{1}
\rcheck{1}{2}
\rcheck{2}{3}
\rcheck{-1}{2}
\rcheck{0}{3}
\rcheckarrows{1}{4}
\draw[invarrow=0.75] (3,4) -- (3,5);
\draw[invarrow=0.87] (4,3) -- (4,5);
\draw (3,0) -- (3,1);
\draw (4,0) -- (4,2);
\draw (0,1) -- (0,0);
\draw (-1,2) -- (-1,0);
\draw (1,5) -- (1,5);
\draw[invarrow=0.75] (0,4) -- (0,5);
\draw[invarrow=0.87] (-1,3) -- (-1,5);
\node[above] at (-1,5) {$\scriptstyle q z_1$};
\node[above] at (0,5) {$\scriptstyle q z_{2}$};
\node[above] at (1,5) {$\scriptstyle q z_3$};
\node[above] at (2,5.7) {$\scriptstyle z_1$};
\node[above] at (3,5.7) {$\scriptstyle z_{2}$};
\node[above] at (4,5.7) {$\scriptstyle z_3$};
\node[above] at (2,5) {$\scriptstyle j$};
\node[above] at (3,5) {$\scriptstyle j$};
\node[above] at (4,5) {$\scriptstyle j$};
\node[below] at (2,0) {$\scriptstyle j$};
\node[below] at (3,0) {$\scriptstyle j$};
\node[below] at (4,0) {$\scriptstyle j$};
\draw (0.5,1.5) circle (0.2);
\draw (2.5,1.5) circle (0.2);
\end{tikzpicture}
=
(t^{1/2}-t^{-1/2})^2
\begin{tikzpicture}[scale=0.5,baseline=(current  bounding  box.center)]
\rcheck{1}{0}
\rcheck{3}{2}
\rcheck{1}{2}
\rcheck{2}{3}
\rcheck{-1}{2}
\rcheck{0}{3}
\rcheckarrows{1}{4}
\draw[invarrow=0.75] (3,4) -- (3,5);
\draw[invarrow=0.87] (4,3) -- (4,5);
\draw (3,0) -- (3,2);
\draw (4,0) -- (4,2);
\draw (0,2) -- (0,0);
\draw (1,2) -- (1,1);
\draw (2,2) -- (2,1);
\draw (-1,2) -- (-1,0);
\draw[invarrow=0.75] (0,4) -- (0,5);
\draw[invarrow=0.87] (-1,3) -- (-1,5);
\node[above] at (-1,5) {$\scriptstyle q y$};
\node[above] at (0,5) {$\scriptstyle q^2 y$};
\node[above] at (1,5) {$\scriptstyle q^3 y$};
\node[above] at (2,5.7) {$\scriptstyle y$};
\node[above] at (3,5.7) {$\scriptstyle q y$};
\node[above] at (4,5.7) {$\scriptstyle q^2 y$};
\node[above] at (2,5) {$\scriptstyle j$};
\node[above] at (3,5) {$\scriptstyle j$};
\node[above] at (4,5) {$\scriptstyle j$};
\node[below] at (2,0) {$\scriptstyle j$};
\node[below] at (3,0) {$\scriptstyle j$};
\node[below] at (4,0) {$\scriptstyle j$};
\end{tikzpicture}
\end{align}
On the right hand side we can use the unitarity relation \eqref{eq:unitarity graph}. This leads us to the following equation:
\begin{align}\label{eq:res Psi C}
\underset{\left\{z_1 = y, z_2 = q y,  z_{3} = q^2 y\right\}}{\normalfont{\text{Res}}}\Psi\left[E_{jj}^{\otimes 3}\right] 
=
(t^{1/2}-t^{-1/2})^2f(q)
\begin{tikzpicture}[scale=0.5,baseline=(current  bounding  box.center)]
\rcheck{3}{2}
\rcheck{2}{3}
\rcheck{-1}{2}
\rcheck{0}{3}
\rcheckarrows{1}{4}
\draw[invarrow=0.75] (3,4) -- (3,5);
\draw[invarrow=0.87] (4,3) -- (4,5);
\draw (1,2) -- (1,3);
\draw (2,2) -- (2,3);
\draw[invarrow=0.75] (0,4) -- (0,5);
\draw[invarrow=0.87] (-1,3) -- (-1,5);
\node[above] at (-1,5) {$\scriptstyle q y$};
\node[above] at (0,5) {$\scriptstyle q^2 y$};
\node[above] at (1,5) {$\scriptstyle q^3 y$};
\node[above] at (2,5.7) {$\scriptstyle y$};
\node[above] at (3,5.7) {$\scriptstyle q y$};
\node[above] at (4,5.7) {$\scriptstyle q^2 y$};
\node[above] at (2,5) {$\scriptstyle j$};
\node[above] at (3,5) {$\scriptstyle j$};
\node[above] at (4,5) {$\scriptstyle j$};
\node[below] at (2,2) {$\scriptstyle j$};
\node[below] at (3,2) {$\scriptstyle j$};
\node[below] at (4,2) {$\scriptstyle j$};
\end{tikzpicture}
\end{align}
The diagram further simplifies. Namely, the only possible non-zero matrix elements of the $\check R$-matrices corresponding to the two crosses on the right are of the form:
\begin{align}\label{eq:Rjjjj}
\begin{tikzpicture}[baseline=(current  bounding  box.center)]
    \rcheckarrows{0}{0}
    \node[above] at (0,1.4) {$\scriptstyle q^3 y$};
    \node[above] at (1,1.4) {$\scriptstyle q^i y$};
    \node[above] at (0,1) {$\scriptstyle j$};
    \node[above] at (1,1) {$\scriptstyle j$};
    \node[below] at (0,0) {$\scriptstyle j$};
    \node[below] at (1,0) {$\scriptstyle j$};
\end{tikzpicture}
=\epsilon_j\frac{t^{-\epsilon_j /2}-q^{i-3} t^{\epsilon_j /2}}{1-q^{i-3}},
\qquad i=1,2
\end{align}
Replacing these two crosses by the corresponding matrix elements of $\check R$ gives us:
\begin{align}\label{eq:res Psi D}
\underset{\left\{z_1 = y, z_2 = q y,  z_{3} = q^2 y\right\}}{\normalfont{\text{Res}}}\Psi\left[E_{jj}^{\otimes 3}\right] 
=
(t^{1/2}-t^{-1/2})^2f(q)
\epsilon_j^2
\frac{t^{-\epsilon_j /2}-q^{-1} t^{\epsilon_j /2}}{1-q^{-1}}
\frac{t^{-\epsilon_j /2}-q^{-2} t^{\epsilon_j /2}}{1-q^{-2}}
\begin{tikzpicture}[scale=0.5,baseline=(current  bounding  box.center)]
\rcheck{-1}{2}
\rcheck{0}{3}
\rcheckarrows{1}{4}
\draw (1,2) -- (1,3);
\draw[invarrow=0.75] (0,4) -- (0,5);
\draw[invarrow=0.87] (-1,3) -- (-1,5);
\node[above] at (-1,5) {$\scriptstyle q y$};
\node[above] at (0,5) {$\scriptstyle q^2 y$};
\node[above] at (1,5) {$\scriptstyle q^3 y$};
\node[above] at (2,5) {$\scriptstyle y$};
\node[right] at (2,5) {$\scriptstyle j$};
\node[below] at (2,4) {$\scriptstyle j$};
\end{tikzpicture}
\end{align}
\begin{equation}\label{eq:res Psi E}
= (t^{1/2}-t^{-1/2})^2f(q)
\epsilon_j^2
\frac{t^{-\epsilon_j /2}-q^{-1} t^{\epsilon_j /2}}{1-q^{-1}}
\frac{t^{-\epsilon_j /2}-q^{-2} t^{\epsilon_j /2}}{1-q^{-2}} \check R_1(q^{-1}) \check R_2(q^{-2}) \Tr_4 \left(\check R_3(q^{-3}) \left(\id^{\otimes 3}\otimes E_{jj}\right)\right)
\end{equation}

This computation generalizes to the case of $\Psi\left[E_{jj}^{\otimes k}\right]$ as follows. 
\begin{itemize}
    \item The circled $\check R$ matrices as in \eqref{eq:res Psi B} will appear in the row of crosses which is immediately below the middle row. In this row there are $k-1$ such matrices. Using \eqref{eq:R residue graph} for each one of them will give rise to the factor:
\[ \left( t^{1/2}-t^{-1/2}\right)^{k-1}\]    
    \item After the above step we will see $k-2$ instances where the unitarity equation \eqref{eq:unitarity graph} can be applied. This will result in the factor:
\[
f(q)^{k-2}
\]    
    and in further $k-3$ instances where the unitarity equation \eqref{eq:unitarity graph} can be applied again. This will produce the factor:
\[
f(q^2)^{k-3}
\]        
    This pattern will repeat itself. The outcome of this step of the computation produces the net factor:
\[
\prod_{l=1}^{k-2} f \left(q^l \right)^{k-1-l}
\]    
and an expression analogous to \eqref{eq:res Psi C}. 
    \item In order to arrive at the expression \eqref{eq:res Psi D} for general $k$ we use the same logic to remove $k-1$ crosses on the right and introduce the corresponding matrix elements of the $R$-matrix. By collecting the factors which were obtained so far in this computation we obtain:
\begin{align}\label{eq:res (k) Psi A}
\underset{\left\{z_1 = y, \ldots,  z_{k} = q^{k-1} y\right\}}{\normalfont{\text{Res}}}\Psi\left[E_{jj}^{\otimes k}\right]
=
(t^{1/2}-t^{-1/2})^{k-1}
\prod_{l=1}^{k-2} f \left(q^l \right)^{k-1-l}
\prod_{l=1}^{k-1}\epsilon_j\frac{t^{-\epsilon_j /2}-q^{-l} t^{\epsilon_j /2}}{1-q^{-l}}
\begin{tikzpicture}[scale=0.5,baseline=(current  bounding  box.center)]
\rcheck{-1}{2}
\rcheck{0}{3}
\rcheckarrows{1}{4}
\draw (1,2) -- (1,3);
\draw[invarrow=0.75] (0,4) -- (0,5);
\draw[invarrow=0.87] (-1,3) -- (-1,5);
\node[above] at (-1,5) {$\scriptstyle q y$};
\node[above] at (0,5) {$\scriptstyle \cdots$};
\node[above] at (1,5) {$\scriptstyle q^k y$};
\node[above] at (2,5) {$\scriptstyle y$};
\node[right] at (2,5) {$\scriptstyle j$};
\node[below] at (2,4) {$\scriptstyle j$};
\end{tikzpicture}
\end{align}    
\begin{align}\label{eq:res (k) Psi B}
=&
(t^{1/2}-t^{-1/2})^{k-1}
\prod_{l=1}^{k-2} f \left(q^l \right)^{k-1-l}
\prod_{l=1}^{k-1}\epsilon_j\frac{t^{-\epsilon_j /2}-q^{-l} t^{\epsilon_j /2}}{1-q^{-l}}
\\
\times &
\check R_1(q^{-1})\cdots \check R_{k-1}(q^{-(k-1)}) 
\Tr_{k+1} \left(\check R_k(q^{-k}) \left(\id^{\otimes k}\otimes E_{jj}\right)\right)
\nonumber
\end{align}
\end{itemize}
The trace in \eqref{eq:res (k) Psi B} can be calculated using the explicit form of $\check R$ from \eqref{eq:R-matrix super}-\eqref{eq:R-check super}:
\begin{align*}
\Tr_{2} \left(\check R(q^{-k}) \left(\id\otimes E_{jj}\right)\right)=    
\epsilon_j \frac{t^{-\epsilon_j /2}-q^{-k} t^{\epsilon_j /2}}{1-q^{-k}}E_{jj}
  +
\frac{\epsilon_j(t^{-\epsilon_j/2}-t^{\epsilon_j/2})}{1-q^{-k}}
  \sum_{1\leq i\leq n+m}\delta_{i\neq j}q^{-k\delta_{i<j}}E_{ii}  
\end{align*}
where we used the identity:
\[
t^{-1/2}-t^{1/2}=\epsilon_j(t^{-\epsilon_j/2}-t^{\epsilon_j/2})
\]
Comparing the final answer for the residue with \eqref{eqn:residue (k)} gives us a formula for $X_k^{(k)}$:
\begin{align}
    \label{eq:Psi Xk}
    X^{(k)} = \prod_{l=1}^{k}\epsilon_j\frac{t^{\epsilon_j /2}-q^{l}t^{-\epsilon_j /2}}{1-q^{l}}
E_{jj}
  +
\prod_{l=1}^{k}\epsilon_j\frac{t^{\epsilon_j /2}-q^{l-1}t^{-\epsilon_j /2}}{1-q^{l}}
  \sum_{\substack{1\leq i\leq n+m\\i\neq j}} q^{k\delta_{i>j}}E_{ii}  
\end{align}
From this we read the coefficient of $E_{ii}$ and find \eqref{eq:alpha Psi}.
\end{proof}

\section*{Acknowledgments}
A.G. would like to thank Ajeeth Gunna and Paul Zinn-Justin for many interesting discussions related to the topic of the paper. A.G. gratefully acknowledges financial support from the Australian Research Council and the hospitality of MIT Department of Mathematics where part of this work was carried.

\bibliographystyle{plain}
\bibliography{literature}{}
\end{document}